\renewcommand{\bar}[1]{{\overline{#1}}}
\newcommand{\cof}{\text{cof}}
\newcommand{\X}{{\mathcal X}}
\newcommand{\R}{\mathbb{R}}
\newcommand{\M}{\mathcal{M}}
\newcommand{\vb}[1]{\mathbf{#1}}
\newcommand{\eps}{\varepsilon}
\renewcommand{\epsilon}{\varepsilon}
\renewcommand{\tilde}[1]{\widetilde{#1}}
\renewcommand{\phi}{\varphi}
\DeclareMathOperator*{\argmin}{argmin}
\DeclareMathOperator*{\argmax}{argmax}
\DeclareMathOperator{\sgn}{sgn}
\DeclareMathOperator{\dist}{dist}
\theoremstyle{plain}
\newtheorem{theorem}{Theorem}
\newtheorem*{theorem*}{Theorem}
\newtheorem{lemma}[theorem]{Lemma}
\newtheorem{proposition}[theorem]{Proposition}
\theoremstyle{definition}
\newtheorem{definition}[theorem]{Definition}
\theoremstyle{remark}
\newtheorem{remark}[theorem]{Remark}
\numberwithin{equation}{section}
\numberwithin{theorem}{section}
\numberwithin{figure}{section}
\numberwithin{table}{section}
\begin{document}

\title{Monotone discretizations of levelset convex geometric PDEs}

\author{Jeff Calder\thanks{School of Mathematics, University of Minnesota. \href{mailto:jwcalder@umn.edu}{jwcalder@umn.edu}}, Wonjun Lee\thanks{Institute for Mathematics and its Applications, University of Minnesota. \href{mailto:lee01273@umn.edu}{lee01273@umn.edu}}}
\date{\today}
\maketitle
\begin{abstract}

We introduce a novel algorithm that converges to level set convex viscosity solutions of high-dimensional Hamilton-Jacobi equations. The algorithm is applicable to a broad class of curvature motion PDEs, as well as a recently developed Hamilton-Jacobi equation for the Tukey depth, which is a statistical depth measure of data points. A main contribution of our work is a new \emph{monotone} scheme for approximating the \emph{direction} of the gradient, which allows for monotone discretizations of pure partial derivatives in the direction of, and orthogonal to, the gradient. We provide a convergence analysis of the algorithm on both regular Cartesian grids and unstructured point clouds in any dimension, and present numerical experiments that demonstrate the effectiveness of the algorithm in approximating solutions of the affine flow in two dimensions and the Tukey depth measure of high-dimensional datasets such as MNIST and FashionMNIST.
\end{abstract}


\section{Introduction}
\label{sec:intro}

The motion of curves or surfaces with normal velocity that depends on curvature has a wide range of applications in science, engineering, and mathematics. A short, and nowhere near complete list includes materials science~\cite{mullins1956two,allen1979microscopic}, fluid and bubble motion~\cite{chang1996level,sussman2000coupled}, image processing~\cite{alvarez1993axioms}, computer vision~\cite{chan2001active,mumford1989optimal}, stochastic control~\cite{soner2003stochastic}, and more recently, data science~\cite{calder2020convex}.

There is a wealth of literature on numerical schemes for approximating geometric motions, and one of the most successful and widely used algorithms is the level set method. This method was pioneered by Osher and Sethian~\cite{osher1988fronts} and implicitly represents the evolving curve or surface as the zero level set of a function $u(x,t)$. The algorithm then solves a level set PDE for the evolution of $u$. The implicit representation allows for topological changes in the surface, and has led to a rigorous notion of geometric flows past singularities by utilizing the machinery of viscosity solutions~\cite{evans1991motion}.

In current numerical practice, there is a significant discrepancy between the numerical schemes used and their theoretical counterparts. Specifically, there is no proof of convergence of the finite difference numerical solutions to the viscosity solution of the level set equation as the grid resolution approaches zero. The difficulty is that convergence proofs are only available for \textbf{monotone} schemes ~\cite{barles1991convergence} (refer to Definition \ref{def:monotone}), and the standard discretizations of curvature are not monotone.   

Several attempts have been made to address the lack of monotonicity in the literature. Merriman, Bence, and Osher~\cite{merriman1992diffusion} introduced a class of \emph{monotone} approximation schemes known as \emph{diffusion generated motion} or \emph{threshold dynamics}. The algorithms consist of two simple steps: (1) Convolution with a positive kernel (diffusion), and (2) thresholding. The original algorithm has been extended to a wide range of anisotropic curvature motions, as well as motions of networks (see \cite{elsey2018threshold} for recent results). Since the schemes are monotone, rigorous proofs of convergence to the viscosity solution are available~\cite{evans1993convergence,barles1995simple}. One drawback of threshold dynamics is that the algorithm may become ``stuck'' if the time step\footnote{The time step refers to the width of the convolution kernel.} is chosen too small, limiting the  accuracy~\cite{esedog2010diffusion}. This can be alleviated by using the signed distance function in place of characteristic functions (see, e.g., \cite{elsey2018threshold,elsey2011large}).  Oberman~\cite{oberman2004convergent} developed a wide-stencil monotone finite difference scheme for curvature motion based on a connection between the local median and curvature. Oberman's wide stencil approach has been extended to more general degenerate elliptic PDEs, including certain types of Hamilton-Jacobi and Monge-Amp\'ere equations \cite{oberman2008wide,benamou2010two,froese2013convergent,oberman2006convergent}, and more recently the affine flow \cite{oberman2018numerical}. In general, monotone schemes are less flexible than non-monotone ones, and in many cases they must be specifically designed for each application.

A noteworthy application of this class of curvature motion PDEs is the computation of data depth. Data depth can be seen as an extension of the notion order statistics to high-dimensional data sets. The \emph{depth} of a data point in a cluster is a notion of how close it is to the center, i.e., the mean or median of the data, with deeper points being more central and representative of the typical data point, and shallower points being identified as outliers. A definition of data depth leads naturally to a notion of high dimensional medians (i.e., the deepest points), and the study of robustness of medians to data perturbations is a central topic in the field of robust statistics. The Tukey, or half-space, depth \cite{tukey1975mathematics} is one of the seminal notions of data depth, and it has been extended to graphs \cite{small1997multidimensional} and metric spaces \cite{carrizosa1996characterization}. Other notions of data depth include convex hull peeling \cite{barnett1976ordering}, the Monge-Kantorovich depth \cite{chernozhukov2017monge}, non-dominated sorting \cite{calder2014}, and Pareto envelope peeling \cite{chepoi2010pareto,bou2021hamilton}.  Many notions of data depth have been connected to Hamilton-Jacobi and curvature motion equations in the large data continuum limit. It was shown in \cite{calder2014,calder2015PDE,cook2022rates} that non-dominated sorting has a Hamilton-Jacobi equation continuum limit. A related Hamilton-Jacobi equation continuum limit was established for Pareto-envelope peeling in \cite{bou2021hamilton}. In \cite{calder2020convex} it was shown that the continuum limit of convex hull peeling is a weighted version of affine invariant curvature motion (i.e., the affine flow). 

Recently, connections have also been made between Hamilton-Jacobi equations and Tukey depth \cite{molina2022tukey}. Tukey depth serves as a statistical measure of data depth and is defined given a data density function $\rho$ as follows:
\begin{align*}
T(x) := \inf_{|v|=1} \int_{(y-x)\cdot v \geq 0} \rho(y) dy.
\end{align*}
In other words, the depth $T(x)$ of a datapoint $x\in \R^n$ is the least amount of probability mass contained in any halfspace that contains $x$. The study \cite{molina2022tukey} showed that the Tukey depth function $T(x)$, under some reasonable assumptions on $\rho$ and its support $\Omega\subset \R^d$, is the viscosity solution of the nonstandard eikonal equation
\begin{equation}\label{eq:tukey_pde}
\begin{aligned}
|\nabla T(x)| &= \int_{{ (y-x)\cdot \nabla T(x)=0}} \rho(y) \, dS(y),&& \text{for } x \in \Omega,
\end{aligned}
\end{equation}
subject to the homogeneous Dirichlet boundary condition $u=0$ on $\partial \Omega$. The viscosity solution of \eqref{eq:tukey_pde} has convex level sets, i.e., it is a \emph{quasiconcave} function. The nonstandard dependence on $\nabla T$ on the right-hand side of Eq.~\eqref{eq:tukey_pde} poses a challenge in constructing a monotone, and hence provably convergent, numerical method. Currently, we are unaware of any existing numerical methods that can be used to solve \eqref{eq:tukey_pde} with provable convergence guarantees. Let us also mention that recent works, some inspired by \cite{molina2022tukey}, have considered using a more standard eikonal equation of the form $|\nabla T| = \phi(\rho)$ for data depth (see \cite{molina2022eikonal} and \cite{calder2022boundary}). These standard eikonal equations can be solved with the Fast Marching Method~\cite{sethian1999fast}, which is known for its speed and efficiency in solving the eikonal equation. In addition, a recent study \cite{JMLR:v23:22-0293} considered a family of graph $p$-eikonal equations, and demonstrated its applications in applications to data depth and semi-supervised learning.

The lack of numerical methods with rigorous guarantees for solving \eqref{eq:tukey_pde} was one of the main motivations for this work. Notice that the right hand side of \eqref{eq:tukey_pde} depends only the \emph{direction} of $\nabla T$, and not on its magnitude. The same types of dependencies arise in curvature motion Hamilton-Jacobi equations, where one can view the various principal curvatures arising in the front propagation speed as pure second derivatives in directions orthogonal to the gradient. In this work, we develop a novel wide-stencil finite-difference technique for discretizing the \emph{direction} of the gradient that works for the Tukey depth equation \eqref{eq:tukey_pde}, as well as Hamilton-Jacobi equations with curvature dependent speeds. Our current work is focused on the setting of monotone front evolution in which the level sets of the solution are convex, but we expect the methods are more general and this constraint can be relaxed in future work. Since our scheme is monotone, we are able to use the Barles-Souganidis framework \cite{barles1991convergence} to prove convergence to the viscosity solution. An interesting feature of our work is that our proposed scheme is not dependent on any grid structure, and it can be easily applied on unstructured, possibly high dimensional, point clouds. While the accuracy of the schemes will suffer from the curse of dimensionality, the computational cost depends only on the number of datapoints and is largely insensitive to dimension. As an application, we present results of solving the Tukey depth PDE \eqref{eq:tukey_pde} on high dimensional image data sets, including MNIST and FashionMNIST.

\subsection{Outline}

This paper is organized as follows. In the following sections, we describe a new technique for constructing monotone finite difference schemes for discretizing the direction of the gradient. We begin in Section~\ref{sec:background} by reviewing the definitions of quasiconcave functions, viscosity solutions, and monotone schemes. In Section~\ref{sec:numer-methods}, we propose a new monotone and consistent numerical scheme for computing viscosity solutions of curvature-driven PDEs and prove the convergence of the scheme on general point clouds in $\mathbb{R}^d$, with an arbitrary dimension $d$. Section~\ref{sec:appl} presents several applications of using the proposed numerical methods to compute solutions of the Tukey depth eikonal equation and mean curvature motion PDEs. Finally, in Section~\ref{sec:exp}, we present numerical examples of using the proposed scheme to solve various eikonal equations in general point clouds settings in dimensions ranging from $d=2$ to $d=784$.

\section{Background}\label{sec:background}

In this paper, we are interested in a general class of second order Hamilton-Jacobi equations of the form
\begin{equation}\label{eq:first-hje}
\left\{
    \begin{aligned}
        H(\nabla^2 u, \nabla u, u, x) &= 0, && x \in \Omega\\
        u(x) &= g(x), && x \in \partial \Omega,
    \end{aligned}
\right.
\end{equation}
where $\Omega \subset \mathbb{R}^d$ is an open and bounded domain, $\partial \Omega$ is a boundary of $\Omega$, $H:\mathbb{R}^{d\times d}_{sym} \times\mathbb{R}^d \times \mathbb{R} \times \Omega \rightarrow \mathbb{R}$, $g:\Omega\rightarrow \mathbb{R}$, and $u:\Omega \rightarrow \mathbb{R}$, with $\nabla u$ denoting the gradient of $u$ and $\nabla^2 u$ denoting the Hessian. In particular, we are interested in the setting where the solution $u$ is \emph{quasiconcave}, which means the super level set $\{u > t\}$ is convex for all $t \in \mathbb{R}$.

\begin{figure}[t!]
\centering
\tikzset{every picture/.style={line width=0.75pt}} 
\begin{tikzpicture}[x=0.75pt,y=0.75pt,yscale=-1,xscale=1]
\draw    (20,120.59) -- (189.41,120.59) ;
\draw    (104.71,50) -- (175.29,120.59) ;
\draw    (104.71,50) -- (34.12,120.59) ;
\draw    (207.06,120) -- (376.47,120) ;
\draw    (312.94,70.59) -- (362.35,120) ;
\draw    (270.59,70.59) -- (221.18,120) ;
\draw    (270.59,70.59) -- (291.76,91.76) ;
\draw    (291.76,91.76) -- (312.94,70.59) ;
\draw    (390.59,120) -- (560,120) ;
\draw    (517.65,91.76) -- (545.88,120) ;
\draw    (432.94,91.76) -- (404.71,120) ;
\draw    (432.94,91.76) -- (447.06,105.88) ;
\draw    (503.53,105.88) -- (517.65,91.76) ;
\draw    (447.06,105.88) -- (461.18,91.76) ;
\draw    (461.18,91.76) -- (475.29,105.88) ;
\draw    (475.29,105.88) -- (489.41,91.76) ;
\draw    (489.41,91.76) -- (503.53,105.88) ;
\end{tikzpicture}
\caption{Non-unique solutions of the 1D eikonal equation $|u'| = 1$ with Dirichlet boundary conditions $u(0)=u(1)=0$.}
\label{fig:1d-example}
\end{figure}
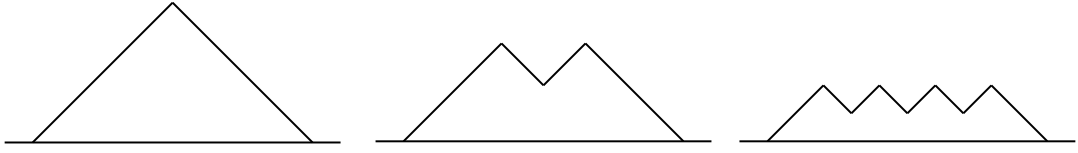

This class of equations usually does not admit classical solutions, i.e., solutions that are continuously differentiable so that the equation is satisfied classically at each $x\in \Omega$. See Figure~\ref{fig:1d-example} for a simple 1D example. Due to the fully nonlinear nature of the equation, the notion of weak solutions using test functions and integration by parts is not applicable. For equations of the form \eqref{eq:first-hje} that satisfy some basic ellipticity and monotonicity conditions, the notion of \emph{viscosity solution}~\cite{crandall1983viscosity,crandall1984some} identifies the physically correct solution for broad ranges of applications, and has proven to be an extremely useful tool in the study of nonlinear PDEs. In this section, we review definitions of quasiconcavity, viscosity solutions, and the convergence for numerical schemes for viscosity solutions.

\subsection{Quasi-concavity}

Let us introduce the definition of quasiconcave functions and their properties.
\begin{definition}
    A function $u:\Omega\to \R$ is \emph{quasiconcave} if
    \[u(\lambda x + (1-\lambda)y) \geq \min\big( u(x), u(y) \big)\] 
    for all $x,y \in \Omega$ and all $0< \lambda <1$. A function $u$ is \emph{strictly quasiconcave} if the inequality is strict. A function $u$ is \emph{locally quasiconcave} at $x \in \Omega$ (resp. \emph{locally strictly quasiconcave}) if $u$ satisfies the inequality (resp. strict inequality) in some neighborhood of $x \in \mathcal{O} \subset \Omega$ .
\end{definition}

\begin{lemma}\label{lem:quasiconcave}
    Given $u\in C^1(\Omega)$, the following are equivalent.
    \begin{enumerate}[label=(\roman*)]
        \item $u$ is quasiconcave.
        \item For all $x,y \in \Omega$, \[(y-x) \cdot \nabla u(x) \leq 0 \implies u(y) \leq u(x).\]
        \item For all $x \in \Omega$, there exists a nonzero $p\in\mathbb{R}^d$ such that for all $y \in \Omega$, \[(y-x) \cdot p \leq 0 \implies u(y) \leq u(x).\]
    \end{enumerate}
\end{lemma}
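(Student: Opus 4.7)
The plan is to establish the cyclic chain of implications $(iii) \Rightarrow (i) \Rightarrow (ii) \Rightarrow (iii)$, which cleanly organizes the proof and minimizes the amount of gradient information needed at each step.

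For $(iii) \Rightarrow (i)$, the argument is purely geometric and requires no differential structure. I would fix $x_1, x_2 \in \Omega$ and $\lambda \in (0,1)$, set $x_0 := \lambda x_1 + (1-\lambda) x_2$, and apply $(iii)$ at $x_0$ to obtain a nonzero $p$. The key observation is that $x_1 - x_0 = (1-\lambda)(x_1 - x_2)$ and $x_2 - x_0 = -\lambda(x_1 - x_2)$ point in antiparallel directions, so the inner products $(x_1 - x_0) \cdot p$ and $(x_2 - x_0) \cdot p$ have opposite signs. Whichever one is $\leq 0$ triggers the implication in $(iii)$ and yields either $u(x_1) \leq u(x_0)$ or $u(x_2) \leq u(x_0)$, so in particular $u(x_0) \geq \min(u(x_1), u(x_2))$.

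For $(i) \Rightarrow (ii)$, I would use a first-order perturbation argument. Fix $x, y$ with $(y - x) \cdot \nabla u(x) \leq 0$. When $\nabla u(x) \neq 0$, set $y_\epsilon := y - \epsilon \nabla u(x)$ so that $(y_\epsilon - x) \cdot \nabla u(x) \leq -\epsilon|\nabla u(x)|^2 < 0$ strictly. If it were the case that $u(y_\epsilon) > u(x)$, then quasiconcavity of $u$ would force $u(x + t(y_\epsilon - x)) \geq u(x)$ on $t \in [0,1]$; dividing by $t$ and sending $t \to 0^+$ gives $(y_\epsilon - x) \cdot \nabla u(x) \geq 0$, a contradiction. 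Hence $u(y_\epsilon) \leq u(x)$ for all small $\epsilon > 0$, and continuity as $\epsilon \to 0$ gives $u(y) \leq u(x)$. The degenerate case $\nabla u(x) = 0$ reduces to the statement that the hypothesis holds vacuously for every $y \in \Omega$, and so $x$ must be a global maximum; I would justify this by considering the convex super-level set $\{u > u(x)\}$ and invoking a supporting-hyperplane argument.

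For $(ii) \Rightarrow (iii)$, the choice of $p$ is essentially automatic: if $\nabla u(x) \neq 0$, set $p := \nabla u(x)$ and $(iii)$ restates $(ii)$. If $\nabla u(x) = 0$, the hypothesis of $(ii)$ is vacuously satisfied for every $y$, so $u(y) \leq u(x)$ everywhere and any nonzero $p$ witnesses $(iii)$ trivially.

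The main obstacle is precisely the critical-point case in $(i) \Rightarrow (ii)$: pure quasiconcavity of a $C^1$ function does not, on its own, forbid a non-maximal point with vanishing gradient, so some care or regularity (for instance, arguing directly that the convex set $\{u > u(x)\}$ must be empty whenever $\nabla u(x) = 0$) is needed to push the perturbation argument through. The other two implications are relatively mechanical; the content of the lemma sits in extracting the existence of the separating direction $p$ from quasiconcavity, which is essentially the supporting-hyperplane theorem applied to the convex super-level set at $x$.
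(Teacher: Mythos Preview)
The paper does not actually prove this lemma; it states it and refers the reader to the standard references on quasiconvex analysis. So your proposal must be judged on its own merits.

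Your cycle $(iii)\Rightarrow(i)\Rightarrow(ii)\Rightarrow(iii)$ is well organized, and the steps $(iii)\Rightarrow(i)$ and $(ii)\Rightarrow(iii)$ are correct as written. The perturbation argument for $(i)\Rightarrow(ii)$ at points where $\nabla u(x)\neq 0$ is also fine.

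The genuine gap is exactly where you locate it: the critical-point case of $(i)\Rightarrow(ii)$. You say you would ``justify [that $x$ is a global maximum] by considering the convex super-level set $\{u>u(x)\}$ and invoking a supporting-hyperplane argument.'' This cannot work, because the claim is false. Take $\Omega=(-1,1)\subset\mathbb{R}$ and $u(t)=t^{3}$. This function is monotone, hence quasiconcave, and $C^{1}$; at $x=0$ we have $u'(0)=0$, so the hypothesis $(y-x)\cdot\nabla u(x)\le 0$ reads $0\le 0$ and is satisfied by every $y$. Condition $(ii)$ would then force $u(y)\le u(0)=0$ for all $y$, which fails at $y=\tfrac12$. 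So $(i)$ holds but $(ii)$ fails, and no supporting-hyperplane argument will rescue the implication. (Note that $(iii)$ \emph{does} hold at $x=0$ with $p=1$, so in fact $(ii)\Rightarrow(iii)$ is strict here too; the equivalence breaks at both places.)

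The standard first-order characterization (e.g.\ Boyd--Vandenberghe) uses a \emph{strict} inequality in the hypothesis: $(y-x)\cdot\nabla u(x)<0\implies u(y)<u(x)$, or equivalently $u(y)\ge u(x)\implies(y-x)\cdot\nabla u(x)\ge 0$. With that version the critical-point case is vacuous in the correct direction and your argument goes through verbatim. Alternatively, the lemma as stated is valid under the additional hypothesis $\nabla u\neq 0$ on $\Omega$, which is in fact how the paper uses it downstream (cf.\ the standing assumption $|\nabla\phi(x)|>0$ in the existence and consistency theorems). You should flag that one of these two fixes is needed.
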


\begin{lemma}\label{lem:second-order-quasiconcave}
    If $u\in C^2(\Omega)$ is quasiconcave then for all $x,y \in \Omega$, \[(y-x) \cdot \nabla u(x) = 0 \implies (y-x) \cdot \nabla^2 u(x)(y-x)\leq 0.\]
\end{lemma}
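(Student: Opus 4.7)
The plan is to reduce the claim to a one-dimensional statement by restricting $u$ to the line through $x$ in the direction $v := y-x$, and then use Lemma \ref{lem:quasiconcave}(ii) to show that the restricted function has a local maximum at $x$. Concretely, I would fix $x,y \in \Omega$ with $(y-x)\cdot \nabla u(x) = 0$, and define $\phi(t) := u(x + tv)$ for $t$ in a small open interval $I \ni 0$ chosen so that $x + tv \in \Omega$ for all $t \in I$ (which is possible because $\Omega$ is open and $x \in \Omega$).

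The key observation is that for every $t \in I$, the point $z := x + tv$ satisfies
\[
(z - x)\cdot \nabla u(x) \;=\; t\,(y-x)\cdot \nabla u(x) \;=\; 0 \;\le\; 0.
\]
Therefore Lemma \ref{lem:quasiconcave}(ii) applies and gives $u(z) \le u(x)$, i.e., $\phi(t) \le \phi(0)$ for all $t \in I$. Hence $\phi \in C^2(I)$ attains a local maximum at $t=0$, so $\phi''(0) \le 0$.

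Finally I would compute $\phi''(0)$ by the chain rule: $\phi'(t) = v \cdot \nabla u(x+tv)$ and $\phi''(t) = v^\top \nabla^2 u(x+tv)\, v$, so
\[
0 \;\ge\; \phi''(0) \;=\; (y-x) \cdot \nabla^2 u(x)(y-x),
\]
which is exactly the desired inequality. There is no serious obstacle here; the only thing to be careful about is that we are allowed to move in \emph{both} directions $\pm t v$ from $x$ (so that $t=0$ really is an interior maximum of $\phi$, not just a boundary one), which is precisely what ensures the second derivative test applies. This works because the quasiconcavity hypothesis in the form of Lemma \ref{lem:quasiconcave}(ii) is symmetric in the two signs of $t$ whenever $(y-x)\cdot \nabla u(x) = 0$.
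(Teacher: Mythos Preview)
Your argument is correct. The paper itself does not supply a proof of this lemma; it merely states it (together with Lemmas~\ref{lem:quasiconcave} and~\ref{lem:neighbor-quasiconcave}) and refers the reader to \cite{barron2013quasiconvex} and \cite{boyd2004convex} for details. Your reduction to the one-variable function $\phi(t)=u(x+tv)$, using Lemma~\ref{lem:quasiconcave}(ii) to conclude that $t=0$ is an interior local maximum and hence $\phi''(0)\le 0$, is the standard and natural proof, and there are no gaps.
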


\begin{lemma}\label{lem:neighbor-quasiconcave}
    Given a function $u\in C^2(\Omega)$ the following are equivalent.
    \begin{enumerate}[label=(\roman*)]
        \item $u$ is strictly quasiconcave.
        \item For all $x,y \in \Omega$, \[(y-x) \cdot \nabla u(x) \leq 0 \implies u(y) < u(x).\]
        \item For all $x \in \Omega$, there exists a nonzero $p\in\mathbb{R}^d$ such that for all $y \in \Omega$, \[(y-x) \cdot p \leq 0 \implies u(y) < u(x).\]
        \item For all $x,y \in \Omega$ and $x\neq y$, \[(y-x) \cdot \nabla u(x) = 0 \implies (y-x) \cdot \nabla^2 u(x)(y-x)<0.\] \label{item:lem-neighbor}
    \end{enumerate}
\end{lemma}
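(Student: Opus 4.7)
The plan is to split the equivalence into two blocks: the strict version (i) $\Leftrightarrow$ (ii) $\Leftrightarrow$ (iii) of Lemma~\ref{lem:quasiconcave}, and a bidirectional link (ii) $\Leftrightarrow$ (iv) that brings in the second-order condition. For the first block I would run the cycle (i) $\Rightarrow$ (ii) $\Rightarrow$ (iii) $\Rightarrow$ (i) by mirroring Lemma~\ref{lem:quasiconcave} and tracking strictness everywhere. For (i) $\Rightarrow$ (ii), assuming $(y-x)\cdot\nabla u(x) \le 0$ with $y\ne x$ but $u(y)\ge u(x)$, strict quasiconcavity forces $u(y_\lambda)>u(x)$ on the open segment $y_\lambda = x+\lambda(y-x)$, and a first-order Taylor expansion combined with the sign hypothesis produces a contradiction (the borderline case $(y-x)\cdot\nabla u(x)=0$ requires a further subdivision of the segment). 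For (ii) $\Rightarrow$ (iii), take $p=\nabla u(x)$ when nonzero, noting that at a critical point (ii) forces $x$ to be a strict global maximum so any nonzero $p$ works. For (iii) $\Rightarrow$ (i), at the interpolant $z=\lambda x+(1-\lambda)y$ the separating vector $p$ supplied by (iii) makes one of $(x-z)\cdot p$, $(y-z)\cdot p$ non-positive, yielding $u(z)>\min(u(x),u(y))$.

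For the second block I would first prove (iv) $\Rightarrow$ (ii): fix $y\ne x$ with $(y-x)\cdot\nabla u(x)\le 0$ and set $\phi(t)=u(x+t(y-x))$ on $[0,1]$. Any interior critical point $t_0$ of $\phi$ satisfies $(y-x)\cdot\nabla u(x+t_0(y-x))=0$, so (iv) applied with the same direction yields $\phi''(t_0)<0$; hence every interior critical point of $\phi$ is a strict local maximum, which combined with $\phi'(0)\le 0$ forces $\phi(t)<\phi(0)$ for all $t\in(0,1]$, i.e.\ $u(y)<u(x)$. For (ii) $\Rightarrow$ (iv), set $v=y-x$ with $v\cdot\nabla u(x)=0$ and apply (ii) along both $\pm tv$; the second-order Taylor expansion
\[ u(x \pm tv) = u(x) + \tfrac{t^2}{2}\, v^T\nabla^2 u(x)\,v + o(t^2) \]
together with the strict inequalities from (ii) gives $v^T\nabla^2 u(x)v \le 0$, and the strict inequality is recovered by a perturbation argument varying $v$ transversally within $\{w : w\cdot\nabla u(x)=0\}$ and using continuity of $\nabla^2 u$.

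The main obstacle is upgrading ``$\le 0$'' to ``$<0$'' in (ii) $\Rightarrow$ (iv), since strict quasiconcavity along a single line is by itself compatible with higher-order degeneracy (cf.\ the one-dimensional example $\phi(t)=-t^4$). The resolution exploits that (ii) constrains all admissible directions simultaneously: at a point with $\nabla u(x)\ne 0$, the implicit function theorem represents $\{u=u(x)\}$ locally as a $C^2$ hypersurface whose strict convexity---implied by strict quasiconcavity of $u$---is exactly encoded by the strict inequality in (iv). At critical points the claim reduces to showing $\nabla^2 u(x)$ is negative definite, which again follows from the same Taylor analysis applied to every direction once one has shown (as in the (ii) $\Rightarrow$ (iii) step) that strict quasiconcavity makes such points isolated strict global maxima.
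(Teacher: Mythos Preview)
The paper supplies no proof of this lemma (it only cites references), so there is nothing to compare against directly, but your proposal has a genuine gap in the direction (ii) $\Rightarrow$ (iv). That implication is in fact \emph{false} under the paper's definition of strict quasiconcavity. Take $u(x_1,x_2)=-x_1^4-x_2$ on any convex neighbourhood of the origin: the superlevel sets $\{x_1^4+x_2\le c\}$ are strictly convex (since $t\mapsto t^4$ is strictly convex), so $u$ is strictly quasiconcave and one checks directly that (ii) holds at every point; yet at $x=0$ the tangent direction $v=(1,0)$ satisfies $v\cdot\nabla u(0)=0$ while $v^{T}\nabla^2 u(0)\,v=0$, violating (iv). Your proposed resolution conflates two distinct notions: strict convexity of a \emph{set} (no line segments in the boundary) versus positive definiteness of the second fundamental form of that boundary. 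The curve $\{x_2=x_1^4\}$ bounds a strictly convex region yet has vanishing curvature at the origin, so the first does not imply the second. The same error recurs in your treatment of critical points: an isolated strict maximum need not have a negative-definite Hessian (witness $-x_1^4-x_2^2$ at the origin).

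Your argument for (iv) $\Rightarrow$ (ii) is correct, so the upshot is that (iv) is genuinely \emph{stronger} than (i)--(iii); what (iv) characterises is closer to the ``robust'' quasiconcavity of \cite{barron2013quasiconvex} (stability under small linear perturbations), not strict quasiconcavity as defined here. As a secondary issue, your sketch of (i) $\Rightarrow$ (ii) in the borderline case $(y-x)\cdot\nabla u(x)=0$ is only gestured at (``further subdivision of the segment''), and one should note that even this implication fails in dimension one: $u(t)=t^3$ is strictly quasiconcave with $u'(0)=0$, yet $0$ is not a maximum, so (ii) fails there.
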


Note that the second order condition is a necessary condition for the quasiconcavity in Lemma~\ref{lem:second-order-quasiconcave}  but it is a necessary and sufficient condition for the strict quasiconcavity in Lemma~\ref{lem:neighbor-quasiconcave}~\ref{item:lem-neighbor}. We refer the reader to~\cite{barron2013quasiconvex} and~\cite{boyd2004convex} for more details on quasiconcave functions.

\subsection{Viscosity solutions}
 Recall the definitions of upper and lower semicontinuous functions.

\begin{definition}
    A function $f: \mathcal{O} \rightarrow \mathbb{R}$ is upper (resp. lower) semicontinuous if
    \[
        \limsup_{\substack{y \rightarrow x \\ y \in \mathcal{O} } } f(y) \leq f(x) \quad ({\textrm{resp. }} \liminf_{y\rightarrow x} f(y) \geq f(x))
    \]
    for all $x \in \Omega$.
\end{definition}

\begin{definition}
    Given a function $f: \mathcal{O} \rightarrow \mathbb{R}$, the upper (resp. lower) semicontinuous envelop of $f$ is
    \[
        f^*(x) = \limsup_{\substack{y\rightarrow x \\ y \in \mathcal{O}}} f(y) \quad (\text{resp. } f_*(x) = \liminf_{\substack{y\rightarrow x \\ y \in \mathcal{O}}} f(y)).
    \]
\end{definition}

We present the definition of viscosity solutions of~\eqref{eq:first-hje} given upper or lower semicontinuous functions.

\begin{definition}\label{def:viscosity-solution}
    An upper semicontinuous (resp. lower semicontinuous) function $u:\Omega \rightarrow \mathbb{R}$ is a \emph{viscosity subsolution} (resp. \emph{supersolution}) of~\eqref{eq:first-hje} if for every $x\in \Omega$ and every smooth test function $\phi \in C^\infty(\mathbb{R}^d)$ such that $u-\phi$ has a local maximum at $x$,
    \[
        \begin{cases}
            H_*(\nabla^2 \phi, \nabla \phi, u, x) \leq 0. & \text{if } x\in\Omega\\
            \min\left(H_*(\nabla^2 \phi, \nabla \phi, u, x) , u(x)-g(x)\right) \leq 0 & \text{if } x\in\partial \Omega
        \end{cases}
    \]
    \noindent (respectively,
    \[
        \begin{cases}
            H^*(\nabla^2 \phi, \nabla \phi, u, x) \geq 0) & \text{if } x\in\Omega\\
            \max\left(H^*(\nabla^2 \phi, \nabla \phi, u, x) , u(x)-g(x)\right) \geq 0 & \text{if } x\in\partial \Omega)
        \end{cases}
    \]
    where $g:\partial\Omega \rightarrow \mathbb{R}$ is continuous.
    \noindent If $u$ is both a viscosity subsolution and a viscosity supersolution, then we call $u$ a \emph{viscosity solution} of~\eqref{eq:first-hje}. We say that the boundary condition in~\eqref{eq:first-hje} hold in the \emph{weak viscosity sense}.

\end{definition}
We note that the upper and lower semicontinuous envelopes $H^*$ and $H_*$ are computed with respect to all of the variables that $H$ depends on. We refer the reader to~\cite{crandall1992user,calder2018lecture} for more details on viscosity solutions. In particular, we treat the boundary conditions in the viscosity sense, as in \cite[Chapter 7]{crandall1992user}.

\subsection{Monotone schemes}

We provide a review of the definitions of monotone schemes used to approximate viscosity solutions based on the Barles-Souganidis framework~\cite{barles1991convergence}. Our finite difference schemes for~\eqref{eq:first-hje} are presented in the form
\[\left\{\begin{aligned}
    S_h(u_h,u_h(x),x) &= 0, &&\text{for } \ x \in \mathcal{X}_{n} \backslash \Gamma_n,\\
    u_h(x) &= g(x), &&\text{for } \ x \in \Gamma_n,
\end{aligned}\right.\]
where $\mathcal{X}_{n} \subset \bar\Omega$ is a set of points with spatial resolution $h$, $\Gamma_n \subset \mathcal{X}_n$ is a set of boundary nodes, $u_h: \mathcal{X}_{n} \to \R$ is the numerical solution, and $S_h$ is the scheme. The first argument of $S_h$ represents the dependence of the scheme on the values of $u_h$ at neighboring points, while the second represents the dependence of the scheme on the value of $u_h$ at the current point $x$. To ensure convergence, the Barles-Souganidis framework provides necessary properties that the scheme must satisfy. In this context, we review the definitions that are required for the convergence of the scheme.

\begin{definition}\label{def:monotone}
A scheme $S_h$ is \emph{monotone} if for all $t \in \R$, $x \in \mathcal{X}_{n}$, and $u,v:\mathcal{X}_{n} \to \R$
\[u\leq v \implies S_h(u,t,x) \geq S_h(v,t,x).\]
\end{definition}

\begin{definition}\label{def:consistent}
    A scheme $S_h$ is \emph{consistent} if for all $x \in \bar\Omega$ and $\phi \in C^\infty(\mathbb{R}^n)$
    \[
        \limsup_{\substack{\gamma \rightarrow 0\\h\rightarrow 0^+\\ y\rightarrow x}} S_h(\phi+\gamma,\phi(y) + \gamma,y) \leq H^*(\nabla^2 \phi,\nabla\phi,\phi,x).
    \]    
    and
    \[
        \liminf_{\substack{\gamma \rightarrow 0\\h\rightarrow 0^+\\ y\rightarrow x}} S_h(\phi+\gamma,\phi(y) + \gamma,y) \geq H_*(\nabla^2 \phi,\nabla\phi,\phi,x).
    \]    
\end{definition}

\begin{definition}\label{def:stable}
    A scheme  $S_h$ is \emph{stable} if the solution of the scheme $u_h$ satisfies
    \[
        \sup_{h>0} \sup_{x\in \mathcal{X}_{n}} |u_h(x)| \leq C
    \]    
    for some positive constant $C > 0$.
\end{definition}

\begin{definition}\label{def:strong-unique}
    The PDE~\eqref{eq:first-hje} satisfies the \emph{strong uniqueness} if  $u\leq v$ on $\bar\Omega$ for every viscosity subsolution $u$ and every viscosity supersolution $v$.
\end{definition}

When the PDE satisfies the comparison principle, in the sense of strong uniqueness in Definition \ref{def:strong-unique}, and the scheme satisfies monotonicity, consistency, and stability, one can show that the solution of the scheme converges uniformly to a unique viscosity solution based on Barles-Souganidis framework (refer to Theorem~\ref{thm:convergence}). We remark that the notion of strong uniqueness is different from a standard comparison principle for viscosity sub and supersolutions due to how Definition \ref{def:viscosity-solution} handles the boundary conditions (which is often called boundary conditions in the viscosity sense, see \cite[Chapter 7]{crandall1992user}).

\section{Numerical methods}\label{sec:numer-methods}

In this section, we introduce our novel monotone numerical scheme for computing quasiconcave viscosity solutions of Hamilton-Jacobi equations. Our scheme can be applied on general point clouds of arbitrary dimensions, provided they satisfy some reasonable properties. This allows the methods to be applied in graph settings, with various graph structures such as $\epsilon$-graphs or $k$-nearest neighbor graphs. Due to the monotonicity of the scheme, the method enjoys strong stability and convergence guarantees.

\subsection{Notation}
Before proceeding, let us fix some notation. Let $\Omega \subset \mathbb{R}^d$ be an open bounded domain. Define a set of points 
\begin{equation*}
    \mathcal{X}_n = \{x_1, x_2, \cdots, x_n\} \subset \bar\Omega,
\end{equation*}
a set of boundary points
\begin{equation*}
    \Gamma_n \subset \mathcal{X}_n,
\end{equation*}
and a spatial resolution
\[
    h := \max_{x\in\mathcal{X}_{n}} \min_{y\in\mathcal{X}_{n}} |x-y|.
\]

For each $x\in \X_n$, we define a set of neighboring points $N_h(x)\subset \X_n$, and we assume there exists $0 < \delta < R$ such that 
\[
    N_{h}(x) \subset B(x,R) \backslash B(x,\delta) \ \ \text{for all } x\in \X_n.
\]
It will be important later on to take $R,\delta = O(h)$. 
Define a set of displacement vectors
\[
    V_{h}(x) = \left\{y-x:\, y \in N_{h}(x)  \right\}
\]
that denotes the vectors pointing from $x$ to each neighbor,
and the local directional resolution at $x \in \mathcal{X}_{n}$ 
\[
    d\theta(x) = \max_{|p| = 1} \min_{q \in V_{h}(x)} w(p,q)
\]
where $w(p,q) = \arccos\left(\frac{p\cdot q}{|p| |q|} \right)$. Define the global directional resolution 
\begin{equation*}
    d\theta  := \max_{x \in \mathcal{X}_{n}} d\theta(x).
\end{equation*}
The following lemma describes the geometric properties of point clouds in $\mathbb{R}^d$, and will be used in the main theorems. The visual representations can be found in Figure~\ref{fig:lem-p-q}.
\begin{lemma}\label{lem:p-q-perp}
    Let $ 0 \leq \theta_1\leq \pi$ and $0 \leq \theta_2 \leq \pi$ be nonnegative constants, and $x$, $p$, $q$ be unit vectors such that $w(x, p) = \theta_1$. 
    \begin{enumerate}[label=(\roman*)]
        \item If $\theta_1 < \theta_2$ and $w(p,q) \geq \theta_2$, then
        \[
            x\cdot q \leq \cos(\theta_2 - \theta_1).
        \]
        The equality is attained if and only if $w(p,q) = \theta_2$ and
        \[
            p = \frac{x - \big(\sin \theta_1/\sin \theta_2\big) q}{|x - \big(\sin \theta_1/\sin \theta_2\big) q|}.
        \]

        \item If $\theta_1 > \theta_2$ and $w(p,q) \leq \theta_2$, then
        \[
            \cos(\theta_1 + \theta_2) \leq x\cdot q \leq \cos(\theta_1 - \theta_2).
        \]
        The left equality is attained if and only if $w(p,q) = \theta_2$ and
        \[
            p = \frac{q + \big(\sin \theta_2/\sin \theta_1\big) x}{|q + \big(\sin \theta_2/\sin \theta_1\big) x|}.  
        \]
        The right equality is attained if and only if $w(p,q) = \theta_2$ and
        \[
            p = \frac{q - \big(\sin \theta_2/\sin \theta_1\big) x}{|q - \big(\sin \theta_2/\sin \theta_1\big) x|}.
        \]
    \end{enumerate}
\end{lemma}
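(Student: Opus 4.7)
The plan is to reduce both bounds to a single pointwise inequality by decomposing $x$ and $q$ into their components parallel and perpendicular to $p$. Setting $\beta = w(p,q)$, and temporarily assuming $\sin\theta_1,\sin\beta>0$, I would write $x=\cos\theta_1\,p+\sin\theta_1\,a$ and $q=\cos\beta\,p+\sin\beta\,b$, where $a,b$ are unit vectors orthogonal to $p$. Expanding the inner product then gives
\[
x\cdot q=\cos\theta_1\cos\beta+\sin\theta_1\sin\beta\,(a\cdot b),
\]
and since $|a\cdot b|\le 1$ by Cauchy--Schwarz, this yields the sharp two-sided inequality
\[
\cos(\theta_1+\beta)\le x\cdot q\le \cos(\theta_1-\beta).
\]

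From here, case (i) will follow by noting $\beta-\theta_1\ge \theta_2-\theta_1>0$ lies in $[0,\pi]$, so monotonicity of $\cos$ on $[0,\pi]$ yields $x\cdot q\le \cos(\beta-\theta_1)\le \cos(\theta_2-\theta_1)$. Case (ii) is analogous: in the regime $\theta_1+\theta_2\le\pi$ relevant to the applications, $\cos$ is decreasing on the intervals in play, so $\beta\le\theta_2<\theta_1$ gives simultaneously $x\cdot q\le\cos(\theta_1-\beta)\le\cos(\theta_1-\theta_2)$ and $x\cdot q\ge\cos(\theta_1+\beta)\ge\cos(\theta_1+\theta_2)$.

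For the equality assertions, I would observe that tightness forces both $\beta=\theta_2$ (to saturate the monotonicity step) and $a\cdot b=\pm 1$, i.e., $a=\pm b$. Solving $a=b$ from the definitions $a=(x-\cos\theta_1\,p)/\sin\theta_1$ and $b=(q-\cos\theta_2\,p)/\sin\theta_2$ produces the linear equation
\[
\sin\theta_2\,x-\sin\theta_1\,q=(\sin\theta_2\cos\theta_1-\sin\theta_1\cos\theta_2)\,p=\sin(\theta_2-\theta_1)\,p,
\]
from which $p$ is a scalar multiple of $x-(\sin\theta_1/\sin\theta_2)\,q$; normalizing to a unit vector recovers the formula in case (i), and the same computation handles case (ii)'s right equality (where the sign of $\sin(\theta_1-\theta_2)$ swaps the roles of $x$ and $q$). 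The left equality in case (ii) comes from $a=-b$ instead, yielding $\sin\theta_2\,x+\sin\theta_1\,q=\sin(\theta_1+\theta_2)\,p$ and hence the stated normalized formula.

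The main obstacle is not the inequalities themselves --- these are essentially a dressed-up spherical triangle inequality --- but the equality bookkeeping: one must verify that the denominators $\sin(\theta_2-\theta_1)$ and $\sin(\theta_1+\theta_2)$ are strictly positive so that normalization is unambiguous, and separately dispatch the degenerate cases where $\theta_1$ or $\beta\in\{0,\pi\}$ (which force $p$ to be (anti)parallel to $x$ or $q$ and collapse the decomposition). These degenerate cases are never tight in the sharp bounds and can be disposed of with a brief remark.
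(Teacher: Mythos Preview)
Your proposal is correct and takes a genuinely different route from the paper. The paper's argument is algebraic: it applies Cauchy--Schwarz in the form $p\cdot(x-\lambda q)\le |x-\lambda q|$ (and the analogous inequality with $q-\lambda x$), squares, and then optimizes over the free parameter $\lambda$, with the choice $\lambda=\sin\theta_1/\sin\theta_2$ or $\pm\sin\theta_2/\sin\theta_1$ eliminating the residual square. Your approach instead decomposes $x$ and $q$ orthogonally with respect to $p$ and reads off the bound from the cosine addition formula together with $|a\cdot b|\le 1$; this is the spherical triangle inequality in disguise and yields both the upper and lower bounds in one line, with the equality cases $a=\pm b$ giving the explicit formulas for $p$ by simple linear algebra. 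The paper's optimization is clever but somewhat opaque about where the answer comes from, whereas your decomposition makes the geometry transparent and the equality analysis mechanical.

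One point worth noting: you correctly flag that the lower bound in (ii) needs $\theta_1+\theta_2\le\pi$. The paper's own proof silently relies on this as well---its squaring step for the lower bound requires $\cos\theta_2-\lambda\cos\theta_1=\sin(\theta_1+\theta_2)/\sin\theta_1\ge 0$---and indeed the bound $x\cdot q\ge\cos(\theta_1+\theta_2)$ is false in general when $\theta_1+\theta_2>\pi$ (take $q=-x$ with $\theta_1$ close to $\pi$). Since every use of the lemma in the paper has $\theta_2$ a small angular resolution, this restriction is harmless in context, but your caveat is well placed.
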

\begin{proof}
    Assume $\theta_1 < \theta_2$ and $w(p,q) \geq \theta_2$ and let $\lambda$ be an arbitrary positive constant. Then
    \begin{equation}\label{eq:lem4-first-ineq}
            p \cdot (x - \lambda q) \leq |x - \lambda q| = \sqrt{1 + \lambda^2 - 2 \lambda x \cdot q}.        
    \end{equation}
    By the assumption, $p \cdot (x - \lambda q) \geq \cos \theta_1 - \lambda \cos \theta_2$. Thus, by squaring both sides, we get
    \begin{align*}
        \cos^2 \theta_1 + \lambda^2 \cos^2 \theta_2 - 2\lambda \cos \theta_1 \cos \theta_2 \leq 1 + \lambda^2 - 2 \lambda x \cdot q.
    \end{align*}
    Using the equality $\cos^2 \theta + \sin^2 \theta = 1$,
    \begin{align*}
        2 \lambda x\cdot q &\leq \sin^2 \theta_1 + \lambda^2 \sin^2 \theta_2 + 2\lambda \cos \theta_1 \cos \theta_2 \\
        &= 2\lambda \cos(\theta_2 - \theta_1) + (\sin\theta_1 - \lambda \sin\theta_2)^2.
    \end{align*}
    Since $\lambda$ is an arbitrary number, we may choose $\lambda = \sin\theta_1 / \sin\theta_2$. Thus,
    \[
        x\cdot q \leq \cos(\theta_2 - \theta_1).
    \]
    From~\eqref{eq:lem4-first-ineq}, the equality is attained if and only if  $w(p,q) = \theta_2$ and $p = \frac{x-\lambda q}{|x-\lambda q|}$.

    For the second part of the lemma, assume $\theta_1 > \theta_2$ and $w(p,q) \leq \theta_2$ and let $\lambda$ be an arbitrary constant. Similar to the proof of the first part, consider
    \begin{align*}
        \cos \theta_2 - \lambda \cos \theta_1 \leq p \cdot (q - \lambda x) \leq |q - \lambda x|
    \end{align*}
    where the first inequality comes from the assumption and $\sgn$ is a sign function. By squaring both sides and rearranging terms,
    \begin{align*}
        2   \lambda x\cdot q &\leq  \sin^2 \theta_2 + \lambda^2 \sin^2 \theta_1 +  2  \lambda \cos \theta_1 \cos \theta_2.
    \end{align*}
    If $\lambda > 0$, then
    \begin{align*}
        2   \lambda x\cdot q &\leq  2\lambda \cos (\theta_1 - \theta_2) + (\sin \theta_2 - \lambda \sin \theta_1)^2.
    \end{align*}
    By choosing $\lambda = \sin\theta_2/\sin\theta_1$,
    \begin{equation}\label{eq:lem4-2nd-1}
        x\cdot q \leq \cos (\theta_1 - \theta_2).
    \end{equation}
    If $\lambda < 0$, then
    \begin{align*}
        - 2   \lambda x\cdot q &\geq  - \sin^2 \theta_2 - \lambda^2 \sin^2 \theta_1 -  2  \lambda \cos \theta_1 \cos \theta_2\\
        &= -2\lambda \cos(\theta_1 + \theta_2) - (\sin\theta_2 + \lambda \sin\theta_1)^2.
    \end{align*}
    By choosing $\lambda = - \sin\theta_2/\sin\theta_1$,
    \begin{equation}\label{eq:lem4-2nd-2}
        x\cdot q \geq \cos (\theta_1 + \theta_2).
    \end{equation}
    The equalities in~\eqref{eq:lem4-2nd-1} and~\eqref{eq:lem4-2nd-2} are attained if and only if  $w(p,q) = \theta_2$ and $p = \frac{q-\lambda x}{|q-\lambda x|}$. This concludes the proof.
\end{proof}

\begin{figure}[ht!]
    \centering
    \subfloat[{Lemma~\ref{lem:p-q-perp}: Case \textit{(i)}}]{

\tikzset{every picture/.style={line width=0.75pt}} 

\begin{tikzpicture}[x=0.75pt,y=0.75pt,yscale=-0.8,xscale=0.8]

\draw    (138.83,229.76) -- (138.83,74.11) ;
\draw [shift={(138.83,72.11)}, rotate = 90] [color={rgb, 255:red, 0; green, 0; blue, 0 }  ][line width=0.75]    (10.93,-3.29) .. controls (6.95,-1.4) and (3.31,-0.3) .. (0,0) .. controls (3.31,0.3) and (6.95,1.4) .. (10.93,3.29)   ;
\draw    (138.83,229.76) -- (232.22,105.24) ;
\draw [shift={(233.42,103.64)}, rotate = 126.87] [color={rgb, 255:red, 0; green, 0; blue, 0 }  ][line width=0.75]    (10.93,-3.29) .. controls (6.95,-1.4) and (3.31,-0.3) .. (0,0) .. controls (3.31,0.3) and (6.95,1.4) .. (10.93,3.29)   ;
\draw    (138.83,229.76) -- (76.58,89.7) ;
\draw [shift={(75.77,87.87)}, rotate = 66.04] [color={rgb, 255:red, 0; green, 0; blue, 0 }  ][line width=0.75]    (10.93,-3.29) .. controls (6.95,-1.4) and (3.31,-0.3) .. (0,0) .. controls (3.31,0.3) and (6.95,1.4) .. (10.93,3.29)   ;
\draw    (138.83,229.76) -- (298.03,259.63) ;
\draw [shift={(300,260)}, rotate = 190.63] [color={rgb, 255:red, 0; green, 0; blue, 0 }  ][line width=0.75]    (10.93,-3.29) .. controls (6.95,-1.4) and (3.31,-0.3) .. (0,0) .. controls (3.31,0.3) and (6.95,1.4) .. (10.93,3.29)   ;
\draw  [dash pattern={on 4.5pt off 4.5pt}][line width=0.75]  (78.99,83.71) .. controls (89.65,69.95) and (147.39,96.81) .. (207.95,143.72) .. controls (268.51,190.63) and (308.96,239.82) .. (298.29,253.58) .. controls (287.63,267.35) and (229.89,240.48) .. (169.33,193.58) .. controls (108.77,146.67) and (68.32,97.48) .. (78.99,83.71) -- cycle ;
\draw    (138.83,150.93) .. controls (166.68,146.2) and (171.93,150.41) .. (186.12,166.7) ;
\draw    (194.33,156.33) .. controls (225.67,175.67) and (232.33,215.67) .. (219.41,244.88) ;

\draw (134.11,50.21) node [anchor=north west][inner sep=0.75pt]    {$x$};
\draw (238.16,81.56) node [anchor=north west][inner sep=0.75pt]    {$p$};
\draw (66.08,64.7) node [anchor=north west][inner sep=0.75pt]    {$q$};
\draw (161.64,127.86) node [anchor=north west][inner sep=0.75pt]    {$\theta _{1}$};
\draw (229.33,187.07) node [anchor=north west][inner sep=0.75pt]    {$\theta _{2}$};

\end{tikzpicture}
        \label{fig:lem-case-1}
        }
    \hfil
    \subfloat[Lemma~\ref{lem:p-q-perp}: Case \textit{(ii)}]{

\tikzset{every picture/.style={line width=0.75pt}} 

\begin{tikzpicture}[x=0.75pt,y=0.75pt,yscale=-0.8,xscale=0.8]

\draw    (138.83,229.76) -- (138.83,74.11) ;
\draw [shift={(138.83,72.11)}, rotate = 90] [color={rgb, 255:red, 0; green, 0; blue, 0 }  ][line width=0.75]    (10.93,-3.29) .. controls (6.95,-1.4) and (3.31,-0.3) .. (0,0) .. controls (3.31,0.3) and (6.95,1.4) .. (10.93,3.29)   ;
\draw    (138.83,229.76) -- (273.48,114.96) ;
\draw [shift={(275,113.67)}, rotate = 139.55] [color={rgb, 255:red, 0; green, 0; blue, 0 }  ][line width=0.75]    (10.93,-3.29) .. controls (6.95,-1.4) and (3.31,-0.3) .. (0,0) .. controls (3.31,0.3) and (6.95,1.4) .. (10.93,3.29)   ;
\draw    (138.83,229.76) -- (191.03,75.56) ;
\draw [shift={(191.67,73.67)}, rotate = 108.7] [color={rgb, 255:red, 0; green, 0; blue, 0 }  ][line width=0.75]    (10.93,-3.29) .. controls (6.95,-1.4) and (3.31,-0.3) .. (0,0) .. controls (3.31,0.3) and (6.95,1.4) .. (10.93,3.29)   ;
\draw    (138.83,229.76) -- (299.31,206.3) ;
\draw [shift={(301.29,206.01)}, rotate = 171.68] [color={rgb, 255:red, 0; green, 0; blue, 0 }  ][line width=0.75]    (10.93,-3.29) .. controls (6.95,-1.4) and (3.31,-0.3) .. (0,0) .. controls (3.31,0.3) and (6.95,1.4) .. (10.93,3.29)   ;
\draw    (138.83,150.93) .. controls (166.68,146.2) and (185.67,160.33) .. (193.67,183) ;
\draw    (206.91,171.71) .. controls (223.67,181) and (229,197.67) .. (225,216.33) ;
\draw  [dash pattern={on 4.5pt off 4.5pt}] (191.67,73.67) .. controls (200.17,66.62) and (231.01,89.78) .. (260.54,125.41) .. controls (290.08,161.03) and (307.13,195.63) .. (298.62,202.68) .. controls (290.12,209.73) and (259.28,186.56) .. (229.75,150.94) .. controls (200.21,115.31) and (183.16,80.72) .. (191.67,73.67) -- cycle ;

\draw (134.11,50.21) node [anchor=north west][inner sep=0.75pt]    {$x$};
\draw (279.5,98.89) node [anchor=north west][inner sep=0.75pt]    {$p$};
\draw (174.98,136.52) node [anchor=north west][inner sep=0.75pt]    {$\theta _{1}$};
\draw (229.33,178.4) node [anchor=north west][inner sep=0.75pt]    {$\theta _{2}$};
\draw (308,191.73) node [anchor=north west][inner sep=0.75pt]    {$q$};

\end{tikzpicture}
    \label{fig:lem-case-2}
    }
    \caption{Visual representations of Lemma~\ref{lem:p-q-perp}.}
    \label{fig:lem-p-q}
    \end{figure}
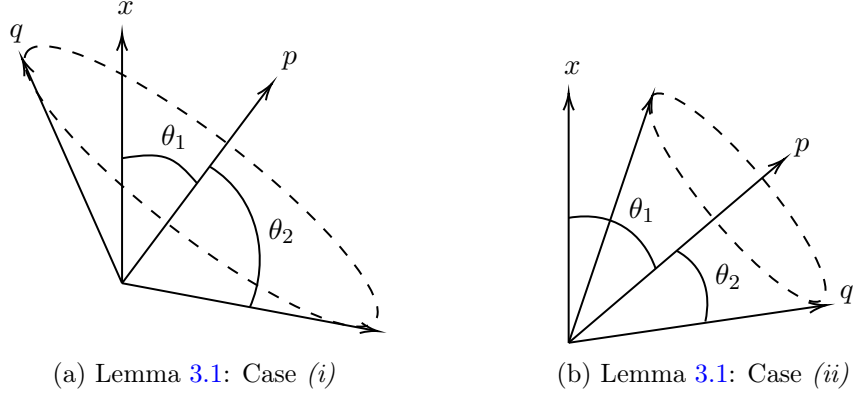

\subsection{Wide stencil schemes}

The schemes we consider in this paper are wide stencil schemes, inspired by schemes for degenerate elliptic equations such as the Monge-Amp\`ere equation~\cite{oberman2008wide}. Consider the first-order Hamilton-Jacobi equation of the form
\begin{equation*}
    H(\nabla u, u, x) =0 \; \text{ in } \Omega.
\end{equation*}
We recall (see \cite{bardi1997optimal}) that the notion of viscosity subsolution can be equivalently expressed as
\[\sup_{p \in D^-(u, x) } H_*(p, u, x) \leq 0\; \text{ in } \Omega\]
where the subdifferential set $D^-$ is defined as
\begin{align*}
D^-(u, x) :=  \Big\{ p\in \R^d:\, u(y) - u(x) \leq p\cdot (y-x) + o(|x-y|) \text{ as } y\to x \Big\}.
\end{align*}
Similarly, the notion of viscosity supersolution can be expressed as  
\[\inf_{p \in D^+(u, x) } H^*(p, u, x) \geq 0\; \text{ in } \Omega\]
where the superdifferential set $D^+$ is defined as
\begin{align*}
D^+(u, x) :=  \Big\{ p\in \R^d:\, u(y) - u(x) \geq p\cdot (y-x) + o(|x-y|) \text{ as } y\to x \Big\}.
\end{align*}
When $u$ is quasiconcave, so that the set $$\{y\in \R^d \, : \, u(y)\geq u(x)\}$$ is convex, we can drop the $o(|x-y|)$ term from the definition of the subdifferential, and equivalently write
\[D^-(u, x) =  \Big\{ p\in \R^d:\, u(y) \leq u(x) + p\cdot (y-x) \text{ for } y \text{ near } x \Big\}.\]
Since we are only concerned with the \emph{direction} of the gradient, and not the magnitude, we can further focus our attention only on the sign of $p\cdot (y-x)$. This leads to the following approximation of the subdifferential set on a general point cloud
\begin{align}\label{eq:P}
P_h^-(u,u(x),x) &:= \Big\{p \in \R^d \, : \, -p \in V_{h}(x), \text{ and } \\
&\hspace{1in}\forall y \in N_{h}(x), \ \ p\cdot (y-x) < 0 \implies u(y) \leq u(x)\Big\}. \notag
\end{align} 
We should explain the choice that $-p \in V_h(x)$ was made so that for any $p\in P^-_h(u,u(x),x)$, we have $x-p\in \X_n$, so that we can form a backward difference quotient (which is upwind/montone). Notice that we do not intend for $P_h^-$ to exactly approximate $D^-$ as $h\to 0$, since the magnitude $|p|$ will in general not converge to $|\nabla u(x)|$. This is the reason for the alternative notation $P^-_h$ instead of, say, $D^-_h$.  Instead, as we show below, the \emph{direction} of $p\in P_h^-$ converges to the direction of the gradient $\nabla u(x)$ as $h\to 0$. 

The set-valued operator $P^-_h(u,u(x),x)$ is the collection of all displacement vectors that support the convex super level set $\{u\geq u(x)\}$. The displacement vector in the set operator lies in the opposite direction of $\nabla u$, that is the downwind direction. See Figure \ref{fig:P} for an illustration. We can also define an analogous approximation $P^+_h$ of the superdifferential, but this is generally the empty set for quasiconcave functions (but would be appropriate for quasiconvex functions).

This set-valued operator has many useful properties that allow us to easily construct convergent monotone schemes for quasiconcave viscosity solutions.  In what follows, we present some properties of the operator and new monotone schemes based on this operator.

\begin{figure}
\centering
\subfloat[]{

\tikzset{every picture/.style={line width=0.75pt}} 

\begin{tikzpicture}[x=0.75pt,y=0.75pt,yscale=-0.7,xscale=0.7]

\draw    (20,96.51) -- (155,264.45) ;
\draw   (102.5,50.48) .. controls (115.5,5.87) and (248.5,11.09) .. (312.5,60.48) .. controls (376.5,109.87) and (320.5,198.98) .. (272.5,230.48) .. controls (224.5,261.98) and (87.5,182.98) .. (87.5,180.48) .. controls (87.5,177.98) and (89.5,95.09) .. (102.5,50.48) -- cycle ;
\draw    (87.5,180.48) -- (145.85,140.72) ;
\draw [shift={(147.5,139.59)}, rotate = 145.73] [color={rgb, 255:red, 0; green, 0; blue, 0 }  ][line width=0.75]    (10.93,-3.29) .. controls (6.95,-1.4) and (3.31,-0.3) .. (0,0) .. controls (3.31,0.3) and (6.95,1.4) .. (10.93,3.29)   ;
\draw [shift={(87.5,180.48)}, rotate = 325.73] [color={rgb, 255:red, 0; green, 0; blue, 0 }  ][fill={rgb, 255:red, 0; green, 0; blue, 0 }  ][line width=0.75]      (0, 0) circle [x radius= 3.35, y radius= 3.35]   ;

\draw (119.5,163.44) node [anchor=north west][inner sep=0.75pt]    {$p\in P_{h}^{-}$};
\draw (68.5,181.99) node [anchor=north west][inner sep=0.75pt]    {$x$};
\draw (148.5,101.99) node [anchor=north west][inner sep=0.75pt]    {$\{y: u(y) \geq u( x)\}$};

\end{tikzpicture}
\label{fig:P}}
\subfloat[]{

\tikzset{every picture/.style={line width=0.75pt}} 

\begin{tikzpicture}[x=0.75pt,y=0.75pt,yscale=-0.7,xscale=0.7]

\draw    (20,96.51) -- (155,264.45) ;
\draw   (110,70) .. controls (123,25.39) and (325,-3.5) .. (370,60) .. controls (415,123.5) and (407.5,194.02) .. (270,220) .. controls (132.5,245.98) and (87.5,182.98) .. (87.5,180.48) .. controls (87.5,177.98) and (97,114.61) .. (110,70) -- cycle ;
\draw    (87.5,180.48) -- (145.85,140.72) ;
\draw [shift={(147.5,139.59)}, rotate = 145.73] [color={rgb, 255:red, 0; green, 0; blue, 0 }  ][line width=0.75]    (10.93,-3.29) .. controls (6.95,-1.4) and (3.31,-0.3) .. (0,0) .. controls (3.31,0.3) and (6.95,1.4) .. (10.93,3.29)   ;
\draw [shift={(87.5,180.48)}, rotate = 325.73] [color={rgb, 255:red, 0; green, 0; blue, 0 }  ][fill={rgb, 255:red, 0; green, 0; blue, 0 }  ][line width=0.75]      (0, 0) circle [x radius= 3.35, y radius= 3.35]   ;
\draw  [fill={rgb, 255:red, 155; green, 155; blue, 155 }  ,fill opacity=0.33 ] (150,70) .. controls (174,51.5) and (282,29.5) .. (280,100) .. controls (278,170.5) and (276,183.5) .. (210,200) .. controls (144,216.5) and (88.5,181.98) .. (87.5,180.48) .. controls (86.5,178.98) and (126,88.5) .. (150,70) -- cycle ;

\draw (119.5,163.44) node [anchor=north west][inner sep=0.75pt]    {$p \in  P_{h}^{-}$};
\draw (68.5,181.99) node [anchor=north west][inner sep=0.75pt]    {$x$};
\draw (151,92.4) node [anchor=north west][inner sep=0.75pt]    {$\{u \geq t\}$};
\draw (301,92.4) node [anchor=north west][inner sep=0.75pt]    {$\{ v \geq t\}$};
\draw (251,232.4) node [anchor=north west][inner sep=0.75pt]    {$u( x) = t= v( x)$};

\end{tikzpicture}
\label{fig:touch}}
\caption{(a) An example of a vector $p$ belonging to the subdifferential $P^-_h(u,u(x),x)$ and (b) an illustration of the set-valued monotonicity of $P^-_h$ with $u \leq v$. }
\label{fig:Ptouch}
\end{figure}
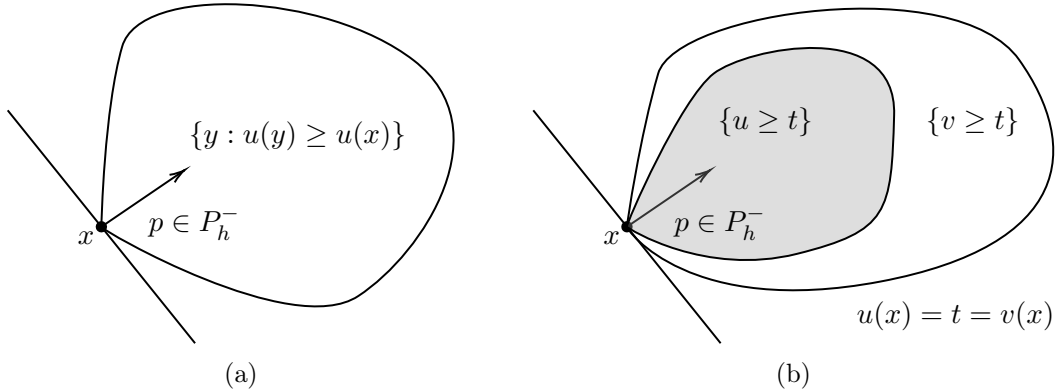

\subsection{Properties of a subdifferential set}

\noindent Monotonicity requires the scheme be a decreasing function of $u_h(y)$ for all neighboring grid points $y\in N_{h}(x)$. 
If the scheme is also an increasing function of $u_h(x)$, then the scheme is often called elliptic~\cite{oberman2006convergent}. The terms \emph{monotone} and \emph{upwind} are used interchangeably for first order equations, and refer to the same property.

A key property of $P_h^-$ is the following monotonicity with respect to set inclusion, which is immediate from the definition.
\begin{proposition}\label{prop:Pm}
For all $t\in \R$, $x \in \mathcal{X}_{n}$ and $u,v:\mathcal{X}_{n} \to \R$
\begin{equation}\label{eq:Pm}
{u\leq v \implies P_h^-(u,t,x) \supset P_h^-(v,t,x).}
\end{equation}
\end{proposition}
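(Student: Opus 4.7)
The plan is to unpack the definition of $P_h^-$ and verify the set inclusion by a direct chase through the quantifiers, with no auxiliary machinery needed. Concretely, I will take an arbitrary $p \in P_h^-(v,t,x)$ and show that both defining conditions of $P_h^-(u,t,x)$ are satisfied.

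First, the geometric condition $-p \in V_h(x)$ depends only on $p$ and the point cloud $\mathcal{X}_n$ and not on the function being tested, so this part transfers from $v$ to $u$ for free. Second, for the implication condition, I will fix $y \in N_h(x)$ with $p \cdot (y-x) < 0$. Membership $p \in P_h^-(v,t,x)$ yields $v(y) \leq t$, and combining with the pointwise hypothesis $u(y) \leq v(y)$ gives $u(y) \leq t$, which is exactly what is needed for $p \in P_h^-(u,t,x)$. This proves $P_h^-(v,t,x) \subset P_h^-(u,t,x)$, which is the claim.

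There is no genuine obstacle here: the statement is immediate from the definition in~\eqref{eq:P}, and the only real content is the observation that the constraint defining $P_h^-$ is an upper bound on $u(y)$ by $t$ on a prescribed subset of neighbors, hence decreasing $v$ to $u$ can only relax the constraint and enlarge the admissible set. The geometric picture behind this is exactly the one in Figure~\ref{fig:touch}: at a value $t$ where both $u(x)=t=v(x)$, the super level set $\{u \geq t\}$ sits inside $\{v \geq t\}$, so any half-space that supports the larger convex set at $x$ also supports the smaller one.
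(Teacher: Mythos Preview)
Your proof is correct and is essentially the same as the paper's, which simply states that the monotonicity is ``immediate from the definition'' without writing out the argument. You have just made explicit the two-line verification the paper leaves to the reader.
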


\noindent In words, if $u(x)=t=v(x)$ and $u\leq v$, then any halfspace supporting $\{v\geq t\}$ also supports $\{u \geq t\}$. See Figure \ref{fig:touch} for an illustration. Let us write $P^-_h[u](x) = P^-_h(u,u(x),x)$ for simplicity.


Next, we present a theorem that establishes conditions under which the subdifferential set is nonempty.
It turns out that this requires \emph{strict} quasiconcavity of the test function $\phi$. Without the strictness, one can choose a sufficiently flat function $\phi$, depending on the local point cloud structure, for which the subdifferential set becomes empty. 

Throughout this section, given $x_0\in\mathcal{X}_{n}$, we will assume $\phi \in C^\infty(\mathbb{R}^d)$ is a smooth function for which there exists $h_0>0$ such that
\begin{equation*}
    q \cdot \nabla \phi(x) = 0 \, \implies q \cdot \nabla^2 \phi(x) q < 0 \quad \text{and} \quad |\nabla \phi(x)| > 0
\end{equation*}
for all $ x \in B(x_0,h_0)$.
The first part is equivalent to $\phi$ being strictly quasiconcave by Lemma~\ref{lem:neighbor-quasiconcave}. By defining a function
\begin{equation}\label{eq:def-quasi-function}
    {L}(X, p) := \sup_{\substack{q\cdot p = 0 \\ |q| = 1}} q \cdot X q,
\end{equation}
we can rewrite the assumption as
\begin{equation}\label{eq:assumption-concave-phi}
    \sup_{ x\in B(x_0,h_0)} {L}(\nabla^2 \phi(x), \nabla \phi(x)) < 0 .
\end{equation}

\begin{theorem}[Existence]\label{thm:existence}
    Let $x_0 \in \mathcal{X}_{n}$ and assume $\phi\in C^\infty(\mathbb{R}^d)$ satisfies $|\nabla \phi(x_0)|>0$ and ~\eqref{eq:assumption-concave-phi}.  Denote by
    \begin{align*}
        A_1   := - {L}(\nabla^2 \phi(x_0), \nabla \phi(x_0))
    \end{align*}
    where $L$ is defined in~\eqref{eq:def-quasi-function}.
    Then the subdifferential set $P_{h}[\phi](x_0)$ is nonempty if $d\theta(x_0)$ and $\delta$ satisfy
    \begin{equation}\label{eq:thm-existence-condition}
        d\theta(x_0) \leq \frac{A_1 \delta}{2|\nabla\phi(x_0)| + C \delta}
    \end{equation}
    where $C$ is a positive constant depending on $\phi$.
\end{theorem}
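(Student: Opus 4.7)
My plan is to construct an explicit element $p$ of $P_h^-[\phi](x_0)$ by selecting the neighbor in $V_h(x_0)$ most aligned with $-\nabla\phi(x_0)$, and then verifying the supporting-halfspace condition through a second-order Taylor expansion. Set $\hat g := \nabla\phi(x_0)/|\nabla\phi(x_0)|$. By the definition of $d\theta(x_0)$ applied to the unit vector $-\hat g$, there exists $q^* \in V_h(x_0)$ with $w(q^*, -\hat g) \leq d\theta(x_0)$. I would take $p := -q^*$, so that $-p = q^* \in V_h(x_0)$ automatically and $p/|p|$ lies within angle $d\theta(x_0)$ of $\hat g$.

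The main task is to show $\phi(y) \leq \phi(x_0)$ for every $y \in N_h(x_0)$ with $p \cdot (y-x_0) < 0$. Writing $q := y-x_0$ and $\hat q := q/|q|$, this hypothesis reads $w(p/|p|, \hat q) > \pi/2$. Applying Lemma~\ref{lem:p-q-perp}(i) to the unit triple $(\hat g, p/|p|, \hat q)$ with $\theta_2 = \pi/2$ yields
\[ \hat g \cdot \hat q \;\leq\; \sin(d\theta(x_0)) \;\leq\; d\theta(x_0). \]
When $\hat g \cdot \hat q \leq 0$, the strict-quasiconcavity characterization in Lemma~\ref{lem:neighbor-quasiconcave}(ii) (which applies because \eqref{eq:assumption-concave-phi} gives strict quasiconcavity on $B(x_0, h_0)$) already yields $\phi(y) < \phi(x_0)$. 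The real work is in the borderline regime $0 < \hat g \cdot \hat q \leq \sin(d\theta(x_0))$, where $\hat q$ is only nearly perpendicular to $\nabla\phi(x_0)$.

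For that regime I would invoke the exact second-order Taylor identity
\[ \phi(y) - \phi(x_0) = |\nabla\phi(x_0)|\,|q|\,(\hat g \cdot \hat q) + \tfrac{1}{2}|q|^2\, \hat q \cdot \nabla^2\phi(\xi)\hat q \]
at some $\xi$ on the segment $[x_0,y]$, and control the Hessian quadratic form using \eqref{eq:assumption-concave-phi} and the definition \eqref{eq:def-quasi-function} of $L$. Decomposing $\hat q$ into its components parallel and orthogonal to $\nabla\phi(\xi)$, bounding the purely orthogonal piece by $L(\nabla^2\phi(\xi), \nabla\phi(\xi))$, and handling the cross-terms linearly in the parallel coefficient, I expect an estimate of the form
\[ \hat q \cdot \nabla^2\phi(\xi)\hat q \;\leq\; -A_1 + C_0\bigl(d\theta(x_0) + R\bigr), \]
for a constant $C_0$ depending on the $C^2$-norm of $\phi$ near $x_0$. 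Plugging back in, using $|q| \in [\delta, R]$ and $\sin(d\theta(x_0)) \leq d\theta(x_0)$, and absorbing lower-order terms into a single constant $C = C(\phi)$, the inequality $\phi(y) - \phi(x_0) \leq 0$ reduces by elementary rearrangement to exactly \eqref{eq:thm-existence-condition}.

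The main obstacle will be the Hessian estimate. The definition of $L$ furnishes a strictly negative bound only for vectors \emph{exactly} orthogonal to $\nabla\phi(\xi)$, whereas here $\hat q$ is only nearly orthogonal and the Hessian is evaluated at a shifted point $\xi \neq x_0$. I must quantify how much the effective upper bound degrades under both perturbations (an orthogonality defect of order $d\theta(x_0)$ from Lemma~\ref{lem:p-q-perp} and a spatial shift of order $R$ from the Taylor remainder), and then package the combined error so that the resulting constant matches the form $2|\nabla\phi(x_0)| + C\delta$ appearing in the denominator of \eqref{eq:thm-existence-condition}.
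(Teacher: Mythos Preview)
Your plan matches the paper's proof almost step for step: choose $-p\in V_h(x_0)$ within angle $d\theta(x_0)$ of $\nabla\phi(x_0)$, apply Lemma~\ref{lem:p-q-perp}(i) to get $\hat g\cdot\hat q\le\sin d\theta(x_0)$, dispose of the case $\hat g\cdot\hat q\le 0$ by strict quasiconcavity, and in the borderline regime Taylor-expand and decompose $\hat q$ orthogonally to the gradient.

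The one substantive difference is where you place the Hessian. The paper expands with $\nabla^2\phi(x_0)$ and decomposes $\hat q$ relative to $\nabla\phi(x_0)$, so the purely orthogonal quadratic piece is bounded directly by $-A_1$ and the cross terms produce the $\phi$-only constant $C=A_1+C_1+2C_2$. Your Lagrange-remainder form evaluates the Hessian at an intermediate $\xi$ and decomposes relative to $\nabla\phi(\xi)$; this is what generates the extra $O(R)$ defect you anticipate, and that term cannot be absorbed into a constant depending only on $\phi$. You would arrive at $d\theta(x_0)\le (A_1-C'R)\,\delta/\bigl(2|\nabla\phi(x_0)|+C\delta\bigr)$ rather than exactly \eqref{eq:thm-existence-condition}. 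If you want the condition in the stated form, expand at $x_0$ and decompose relative to $\nabla\phi(x_0)$ as the paper does (the cubic remainder is then quietly dropped; strictly speaking it also contributes an $O(R)$ term, so neither argument is perfectly clean, but the $R$-dependence is harmless for the downstream convergence analysis where $R=O(h)\to 0$).
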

\begin{proof}
    By the definition of $d\theta$, there exists $-p\in V_{h}(x_0)$ such that
    \[
        w(\nabla \phi(x_0), p) \leq d\theta(x_0).
    \]
    We want to show $\phi(x_0+q) \leq \phi(x_0)$ for any $q \in V_{h}(x_0)$ such that $w(p, q) > \pi/2$. Choose $q \in V_{h}(x_0)$ such that $w(p, q) > \pi/2$. By Lemma~\ref{lem:p-q-perp}, we have $w(\nabla \phi(x_0), q) > \pi/2 - d\theta(x_0)$. If $w(\nabla \phi(x_0), q) > \pi/2$, then $\phi(x_0+q) \leq \phi(x_0)$ by Lemma~\ref{lem:neighbor-quasiconcave}. Thus, assume
    \[
        \pi/2 - d\theta(x_0) < w(\nabla \phi(x_0), q) \leq \pi/2.
    \]
    Decompose $q$ such that
    \[
        q = |q| \left( \cos\Theta \, \frac{r}{|r|} + \sin\Theta \frac{\nabla\phi(x_0)}{|\nabla\phi(x_0)|} \right)
    \]
    where $r$ is an orthogonal vector to $\nabla\phi(x_0)$ and $\Theta = w(r, q) = \pi/2 - w(\nabla \phi(x_0), q)$. 
    Using a Taylor expansion of $\phi$,
    \begin{align*}
        &\phi(x_0 + q)\\ 
        &\leq \phi(x_0) + |\nabla \phi(x_0)| |q| \sin d\theta(x_0) + \frac{1}{2} q \cdot \nabla^2\phi(x_0) q\\
            &= \phi(x_0) + |\nabla \phi(x_0)| |q| \sin d\theta(x_0) + \frac{|q|^2}{2} \bigg( 
                        \cos^2\Theta \frac{r}{|r|} \cdot \nabla^2\phi(x_0)\frac{r}{|r|}\\
                        &\hspace{2cm} + \sin^2\Theta \frac{\nabla\phi(x_0)}{|\nabla\phi(x_0)|} \cdot \nabla^2\phi(x_0)\frac{\nabla\phi(x_0)}{|\nabla\phi(x_0)|}
                        + 2 \sin\Theta \cos\Theta  \frac{\nabla\phi(x_0)}{|\nabla\phi(x_0)|} \cdot \nabla^2\phi(x_0)\frac{r}{|r|} \bigg)\\
            &\leq \phi(x_0) + |\nabla \phi(x_0)| |q| \sin d\theta(x_0) + \frac{|q|^2}{2} (
                        -A_1 \cos^2\Theta + C_1 \sin^2\Theta 
                        + 2 C_2 \sin\Theta )
    \end{align*}
    where we denote
    \begin{equation}\label{eq:in-thm-existence-constants}
        \begin{aligned}
            C_1 &= \sup_{x \in B(x_0,h_0)} \left|\frac{\nabla\phi(x)}{|\nabla\phi(x)|} \cdot \nabla^2\phi(x)\frac{\nabla\phi(x)}{|\nabla\phi(x)|} \right| ,\\
            C_2 &= \sup_{\substack{x \in B(x_0,h_0) \\ r\cdot \nabla\phi(x)=0}} \left|\frac{\nabla\phi(x_0)}{|\nabla\phi(x_0)|} \cdot \nabla^2\phi(x_0)\frac{r}{|r|} \right|.
        \end{aligned}
    \end{equation}
    Using $\cos^2\Theta + \sin^2\Theta = 1$ and $\Theta < d\theta(x_0)$,
    \begin{align*}
            &\leq \phi(x_0) + |\nabla \phi(x_0)| |q| \sin d\theta(x_0) + \frac{|q|^2}{2} (
                        -A_1 + (A_1 + C_1 + 2 C_2)\sin d\theta(x_0))\\
            &\leq \phi(x_0) + \frac{|q|^2}{2} \left( \Big( \frac{2|\nabla\phi(x_0)|}{\delta} + C \Big) d\theta(x_0) - A_1 \right)\\
            & \leq \phi(x_0)
    \end{align*}
    where $C= A_1 + C_1 + 2 C_2$ and the last inequality comes from~\eqref{eq:thm-existence-condition}.
    Thus, $p \in P^-_h[\phi](x_0)$.
\end{proof}


Theorem~\ref{thm:existence} gives conditions that guarantee the subdifferential set to be nonempty on general point clouds in $\mathbb{R}^d$. Note that $\phi$ needs to be strictly quasiconcave because the constant $A_1$ being strictly positive is crucial for the condition~\eqref{eq:thm-existence-condition} to hold. If the point cloud satisfies some form of symmetry in $\mathbb{R}^2$, then the set can be nonempty with a quasiconcave $\phi$.

\begin{theorem}[Existence on symmetric stencils on $\mathbb{R}^2$]
    Let $x_0 \in \mathcal{X}_{n} \subset \mathbb{R}^2$ and assume $\phi \in C^{\infty}(\mathbb{R}^2)$ is quasiconcave and $|\nabla \phi(x_0)| > 0$.
    Suppose $V_{h}(x_0)$ satisfies
    \begin{enumerate}[label=(\roman*)]
        \item If $p \in V_{h}(x_0)$ then $-p \in V_{h}(x_0)$, and
        \item If $p \in V_{h}(x_0)$ then there exists $q \in V_{h}(x_0)$ such that $p \cdot q = 0$.
    \end{enumerate}
    Then the subdifferential set $P_{h}^{-}[\phi](x_0)$ is nonempty.
\end{theorem}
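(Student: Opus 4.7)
The plan is to produce an explicit $p^*\in P_h^-[\phi](x_0)$, built from the ``extremal'' high-$\phi$ neighbor and its correctly oriented perpendicular in $V_h(x_0)$.

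First I would reduce to the nontrivial case. If $H:=\{y\in N_h(x_0):\phi(y)>\phi(x_0)\}$ is empty then the implication in the definition of $P_h^-$ is vacuous and any $-p\in V_h(x_0)$ lies in $P_h^-[\phi](x_0)$; so assume $H\neq\emptyset$. Quasiconcavity (Lemma~\ref{lem:quasiconcave}) combined with $|\nabla\phi(x_0)|>0$ forces $(y-x_0)\cdot\nabla\phi(x_0)>0$ for every $y\in H$ by contrapositive. Working in the orthonormal frame $(e_1,e_2)$ of $\mathbb{R}^2$ with $e_1=\nabla\phi(x_0)/|\nabla\phi(x_0)|$, this says the polar angle $\theta(y)$ of each $y-x_0$ with $y\in H$ lies strictly inside $(-\pi/2,\pi/2)$.

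Next I would pick the extremal neighbor. Since $H$ is finite and nonempty, $\theta_*:=\max_{y\in H}\theta(y)$ is attained at some $y_*\in H$, and $v_*:=y_*-x_0\in V_h(x_0)$ has angle exactly $\theta_*$. Now invoke the stencil hypotheses: by (ii) there is $q_*\in V_h(x_0)$ with $q_*\perp v_*$, and by (i) both $q_*$ and $-q_*$ lie in $V_h(x_0)$. In the plane these two perpendiculars are at angles $\theta_*\pm\pi/2$; define $-p^*$ to be the one at angle $\theta_*+\pi/2$, so that $p^*$ has angle $\theta_*-\pi/2$ (mod $2\pi$) and $|p^*|=|v_*|>0$.

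The verification is then a short trig computation. The subdifferential condition, via contrapositive, requires $p^*\cdot(y-x_0)\geq 0$ for every $y\in H$. Expanding in the frame $(e_1,e_2)$ gives
\[
    p^*\cdot(y-x_0)=|p^*|\,|y-x_0|\,\sin\bigl(\theta_*-\theta(y)\bigr).
\]
For $y\in H$ we have $\theta(y)\leq\theta_*$ by maximality together with $\theta(y)>-\pi/2$ and $\theta_*<\pi/2$, so $\theta_*-\theta(y)\in[0,\pi)$ and the sine is nonnegative. Hence $p^*\in P_h^-[\phi](x_0)$.

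The delicate point, and the only place both hypotheses (i) and (ii) are truly needed, is the oriented choice of perpendicular: the opposite sign for $-p^*$ (angle $\theta_*-\pi/2$) would flip the sine to $\sin(\theta(y)-\theta_*)\leq 0$ and place the forbidden semicircle on the high-$\phi$ side. Hypothesis~(ii) alone supplies one perpendicular but not necessarily the right one; hypothesis~(i) then produces its antipode, so the correctly oriented vector is guaranteed to belong to $V_h(x_0)$. Choosing the \emph{extremal} $v_*$ is what makes a single one-sided perpendicular suffice for all of $H$ simultaneously, by exploiting the total order on angles inside the strict halfplane $\{(y-x_0)\cdot\nabla\phi(x_0)>0\}$.
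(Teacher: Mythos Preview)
Your proof is correct and takes a genuinely different route from the paper. The paper instead selects $p^*$ as the vector with $-p^*\in V_h(x_0)$ \emph{minimizing} the angle $w(\nabla\phi(x_0),p)$, and then argues by contradiction: if some $q\in V_h(x_0)$ with $w(p^*,q)>\pi/2$ still satisfied $w(\nabla\phi(x_0),q)\le\pi/2$, the symmetry hypotheses (i)--(ii) would produce a perpendicular $r\in V_h(x_0)$ to $q$ lying strictly closer to $\nabla\phi(x_0)$ than $p^*$, contradicting minimality. Your argument is more elementary---it bypasses Lemma~\ref{lem:p-q-perp} entirely and works directly with the total order on polar angles inside the open half-plane $\{(y-x_0)\cdot\nabla\phi(x_0)>0\}$. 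The paper's choice, on the other hand, is the one that feeds naturally into the consistency analysis (Theorem~\ref{thm:P-upper-bound}), since its $p^*$ is by construction the best stencil approximation of the gradient direction. One minor imprecision: you write $|p^*|=|v_*|$, but the perpendicular supplied by hypothesis~(ii) need not have the same length as $v_*$; this is harmless, since only the direction of $p^*$ and the fact $|p^*|>0$ enter the verification.
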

\begin{proof}
    Choose $-p^* \in V_{h}(x_0)$ such that 
    \begin{equation}\label{eq:p-star-argmin}
        -p^* = \argmin_{-p \in V_{h}(x_0)} w(\nabla \phi(x_0), p).
    \end{equation}
    Note that $w(\nabla \phi(x_0), p^*) \leq d\theta$. We will show $p^* \in P^-_h[\phi](x_0)$.
    Choose any $q \in V_{h}(x_0)$ such that $w(p^*, q) > \pi/2$. By Lemma~\ref{lem:p-q-perp}, we have
    \[
        \frac{\nabla \phi(x_0)}{|\nabla \phi(x_0)|} \cdot \frac{q}{|q|} < \cos\left( \frac{\pi}{2} - w(\nabla \phi(x_0), p^*) \right)
    \]
    which follows that $w(\nabla \phi(x_0), q) > \pi/2 - w(\nabla \phi(x_0), p^*)$. 
    Suppose $w(\nabla \phi(x_0), q) \leq \pi/2$. Then there exists $\eps$ 
     such that $0 \leq \eps < w(\nabla \phi(x_0), p^*)$ and
    \[
        w(\nabla \phi(x_0), q) = \frac{\pi}{2} - \eps.
    \]
    Then, by the symmetry of $N_{h}(x_0)$, there exists $r \in V_{h}(x_0)$ such that $w(q,r) = \pi/2$
    and by Lemma~\ref{lem:p-q-perp}, $r$ also satisfies
    \[
        w(\nabla \phi(x_0), r) = \eps.
    \]
    Thus, $w(\nabla \phi(x_0), r) < w(\nabla \phi(x_0), p^*)$, which is a contradiction to the definition of $p^*$. Thus, $q$ satisfies $w(\nabla \phi(x_0), q) > \pi/2$. Since $\phi$ is quasiconcave, we have $\phi(x_0 + q) \leq \phi(x_0)$. Thus, $p^* \in P_{h}^{-}[\phi](x_0)$. 
\end{proof}

Coming back to the general unstructured point cloud setting in $\mathbb{R}^d$, we will show that the subdifferential set $P^-_h[\phi](x_0)$ converges to the direction of $\nabla \phi(x_0)$ in a sense that
\[
    \lim_{h\rightarrow 0} \min_{p\in P^-_h[\phi](x_0)} \frac{p}{|p|} \cdot \frac{\nabla \phi(x_0)}{|\nabla \phi(x_0)|} = 1.
\]

\begin{theorem}\label{thm:P-upper-bound}
Let $x_0 \in \mathcal{X}_{n}$ and $\phi \in C^\infty(\mathbb{R}^d)$ be such that $|\nabla \phi(x_0)| > 0$.
Denote by
    \begin{align*}
        A_2  := \max_{\substack{r\cdot \nabla\phi(x_0)=0}} - \frac{r}{|r|} \cdot \nabla^2\phi(x_0)\frac{r}{|r|}.
    \end{align*}
If $p \in P_{h}^{-}[\phi](x_0)$, then 
\begin{equation}\label{eq:subdiff-angle}
       w(p, \nabla \phi(x_0)) \leq \frac{A_2 h}{2|\nabla \phi(x_0)| - h (C_1 + 2 C_2)}   + 2 d\theta
\end{equation}
where $C_1$ and $C_2$ are from~\eqref{eq:in-thm-existence-constants}.
\end{theorem}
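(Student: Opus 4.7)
The plan is to argue by contradiction. Write $\theta := w(p,\nabla\phi(x_0))$, $\hat p := p/|p|$, and $\nu := \nabla\phi(x_0)/|\nabla\phi(x_0)|$, and assume $\theta$ exceeds the stated bound. I will produce some $q\in V_h(x_0)$ with $q\cdot p<0$ strictly and $\phi(x_0+q)>\phi(x_0)$; this contradicts $p\in P_h^-[\phi](x_0)$ by the defining property of that set.

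The vector $q$ is constructed inside the two-plane $\Pi$ spanned by $\hat p$ and $\nu$. For small $\eps>0$, let $\tilde v\in\Pi$ be the unit vector at angle $\pi/2+d\theta+\eps$ from $\hat p$ on the $\nu$-side of $\hat p^\perp$. By the definition of $d\theta$, there exists $q\in V_h(x_0)$ with $w(q/|q|,\tilde v)\le d\theta$. Two applications of the spherical triangle inequality (a consequence of Lemma~\ref{lem:p-q-perp}) then yield
\[
w(q,\hat p)\ge \pi/2+\eps \quad\text{and}\quad w(q,\nu)\le \pi/2-\tau,
\]
where $\tau := \theta-2d\theta-\eps$. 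The first inequality gives $q\cdot p<0$ strictly; writing $\beta:=w(q,\nu)$, the second gives $\cos\beta\ge\sin\tau$.

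Next I apply Taylor's theorem to $\phi$ at $x_0$ to second order. Decompose $q=(q\cdot\nu)\nu+q^\perp$ with $q^\perp\perp\nu$. Expanding $q\cdot\nabla^2\phi(x_0)q$ and bounding each of the three resulting inner products by $A_2$ (for the $q^\perp$--$q^\perp$ piece), $C_2$ (cross term), and $C_1$ (for the $q^\parallel$--$q^\parallel$ piece) gives
\[
q\cdot\nabla^2\phi(x_0)q \ge -|q|^2\bigl(A_2\sin^2\beta+2C_2|\sin\beta\cos\beta|+C_1\cos^2\beta\bigr),
\]
and $\sin^2\beta\le 1$, $|\sin\beta|\le 1$, $\cos^2\beta\le\cos\beta$ then yield
\[
\phi(x_0+q)-\phi(x_0) \ge |q||\nabla\phi(x_0)|\cos\beta -\tfrac{|q|^2}{2}\bigl(A_2+(C_1+2C_2)\cos\beta\bigr)+O(|q|^3).
\]
The right-hand side is strictly positive whenever $\cos\beta\bigl(2|\nabla\phi(x_0)|-|q|(C_1+2C_2)\bigr)>|q|A_2$, i.e., $\cos\beta>\frac{|q|A_2}{2|\nabla\phi(x_0)|-|q|(C_1+2C_2)}$. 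Since this upper bound is monotone in $|q|\le h$ and $\cos\beta\ge\sin\tau$, a sufficient condition is $\sin\tau>\frac{A_2 h}{2|\nabla\phi(x_0)|-h(C_1+2C_2)}$. Letting $\eps\to 0$ and invoking $\sin x\approx x$ in the small-angle regime (all relevant angles are $O(h)$) gives the stated estimate by contraposition.

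The main obstacle is the spherical-geometry bookkeeping: the $\eps$-perturbation must be simultaneously large enough to force $q\cdot p<0$ strictly and small enough to keep $w(q,\nu)$ comfortably below $\pi/2$, so that $\cos\beta$ admits a nontrivial lower bound. Secondary technical points are absorbing the cubic Taylor remainder (which is the origin of the $-h(C_1+2C_2)$ correction in the denominator) and justifying the linearization $\sin\tau\approx\tau$ in the final step; both are controlled for $h$ sufficiently small relative to $|\nabla\phi(x_0)|$.
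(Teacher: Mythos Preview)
Your proof is correct and follows essentially the same contradiction argument as the paper: pick an ideal direction in the $(\hat p,\nu)$-plane at angle $\pi/2+d\theta+\eps$ from $\hat p$, replace it by a nearby stencil vector via the definition of $d\theta$, apply Lemma~\ref{lem:p-q-perp} (your ``spherical triangle inequality'') to control both $w(q,\hat p)$ and $w(q,\nu)$, and Taylor expand to force $\phi(x_0+q)>\phi(x_0)$. One small correction to your commentary: the $-h(C_1+2C_2)$ term in the denominator does not come from the cubic Taylor remainder but from the second-order Hessian pieces you yourself bounded by $C_1\cos^2\beta$ and $2C_2\sin\beta\cos\beta$; these carry a factor of $\cos\beta$ and therefore combine with the first-order term when you solve for the threshold on $\cos\beta$.
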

\begin{proof}
    For simplicity, denote by  $\Theta:= \frac{A_2}{2|\nabla \phi(x_0)|/h - C_1 - 2C_2}$. Suppose, on the contrary, there exists $p \in P_{h}^{-}[\phi](x_0)$ such that 
    \[
        w(p, \nabla \phi(x_0)) = \Theta   + 2d\theta + \eps 
    \]
    for some constant $\eps > 0$. By Lemma~\ref{lem:p-q-perp}, there exists a vector $q \in \mathbb{R}^d$ such that $w(p,q) = \pi/2 + d\theta + \eps$ and 
    \[
        \frac{\nabla \phi(x_0)}{|\nabla \phi(x_0)|} \cdot \frac{q}{|q|} = \sin  (\Theta + d\theta).
    \]
    Thus,
    \[
        w(q, \nabla \phi(x_0)) = \frac{\pi}{2} - \Theta - d\theta.
    \]
    By the definition of $d\theta$, there exists $q' \in V_{h}(x_0)$ such that $w(q,q') < d\theta$. By Lemma~\ref{lem:p-q-perp}, 
    \begin{align*}
        \frac{p}{|p|}\cdot \frac{q'}{|q'|} \leq \cos\left( \frac{\pi}{2} + \eps \right) =  - \sin \eps < 0 \Longrightarrow w(p,q') > \frac{\pi}{2}.
    \end{align*}
    Again, by Lemma~\ref{lem:p-q-perp}, we have
    \begin{align*}
        \frac{\nabla\phi(x_0)}{|\nabla\phi(x_0)|}\cdot \frac{q'}{|q'|} > \cos\left( \frac{\pi}{2} - \Theta \right) =  \sin \Theta.
    \end{align*}
    Using a Taylor expansion on $\phi$,
    \begin{align*}
        \phi(x_0+q') > \phi(x_0) + |\nabla\phi(x_0)| |q'| \sin\Theta + \frac{1}{2} q' \cdot \nabla^2\phi(x_0) q'.
    \end{align*}
    Similar to the proof in Theorem~\ref{thm:existence}, we may use the orthogonal decomposition of $q'$. This leads to
     \begin{align*}
        \phi(x_0+q') &> \phi(x_0) + |\nabla\phi(x_0)| |q'| \sin\Theta + \frac{|q'|^2}{2} \left( -A_2 \cos^2\Theta - C_1 \sin^2 \Theta - 2 C_2 \sin \Theta \right)\\
        &\geq \phi(x_0) + |\nabla\phi(x_0)| |q'| \sin\Theta + \frac{|q'|^2}{2} \left( -A_2 - (C_1 + 2C_2) \sin \Theta \right)\\
        &\geq \phi(x_0) + \frac{|q'|^2}{2} \left( \left(\frac{2|\nabla\phi(x_0)|}{h} - C_1 - 2 C_2 \right) \sin\Theta  - A_2  \right)\\
        &= \phi(x_0).
    \end{align*}
    Since $w(p,q')>\pi/2$ and $\phi(x_0+q') > \phi(x_0)$, this is a contradiction to $p \in P_{h}^{-}[\phi](x_0)$. 
\end{proof}

\subsection{Monotone and consistent scheme}




In this section, we use the subdifferential set to construct monotone and consistent schemes for Hamilton-Jacobi equations with quasiconcave solutions. 
Since we are only interested in viscosity solutions that are quasiconcave, we consider the following operator
\begin{align}\label{eq:new-H-operator}
    \tilde{H}(\nabla^2 u, \nabla u, u, x) := \begin{cases}
        H(\nabla^2 u, \nabla u, u, x) & \text{if } L(\nabla^2 u(x),\nabla u(x)) \leq 0,\\
        -\infty & \text{otherwise.}
    \end{cases}
\end{align}
A similar operator is used in~\cite{calder2020convex}. Since $H$ and $L$ are elliptic, $\tilde{H}$ is also elliptic, i.e. for any $p\in\mathbb{R}^d$, $z\in \R$, $x\in \Omega$, and $X,Y\in \mathbb{R}^{d\times d}_{sym}$ we have
\[
    X \leq Y \implies \tilde{H}(X,p,z,x) \geq \tilde{H}(Y,p,z,x).
\]
Recall that the condition $L(\nabla^2 u(x),\nabla u(x)) \leq 0$ is the second-order necessary condition for the quasiconcavity from Lemma~\ref{lem:quasiconcave}. 
Thus, if $u$ is a quasiconcave solution of $H$ then the subdifferential set is nonempty for all $x\in\Omega$ and
\[ \tilde{H}(\nabla^2 u, \nabla u, u, x) = H(\nabla^2 u, \nabla u, u, x) = 0. \]

Throughout the section, we will assume that 
\begin{equation}\label{eq:delta-theta-relation}
    d\theta < h^{1+\alpha}    
\end{equation}
for some $\alpha >0$. Given a strictly quasiconcave function, the inequality in Theorem~\ref{thm:existence} is satisfied for all $x\in\mathcal{X}_{n}$ by choosing a sufficiently small $h$.
Thus, the subdifferential set is nonempty in $\mathcal{X}_{n}$.

We propose a new numerical scheme $S_h$ using the subdifferential operator,
\begin{equation}\label{eq:general-form-s}
    S_h(u,u(x),x) := 
    \begin{cases}
        \max_{p \in P_{h}[u](x)} F_h(p,u,u(x),x) & \text{if } P^-_h[u](x) \neq \emptyset,\\
        -\infty & \text{otherwise}
    \end{cases}
\end{equation}
where $F_h = F_h(p,u,t,x)$ is a function that satisfies
\begin{enumerate}[label=(F\arabic*)]
    \item\label{item:F-mon} $F_h$ is monotone,
    \item\label{item:F-cts} $F_h$ is continuous in $u$ and $t$,
    \item\label{item:F-H} given $x\in\Omega$, $p\in\mathbb{R}^d$, $X \in \mathbb{R}^{d\times d}_{sym}$ and $u \in C^\infty(\mathbb{R}^d)$, $F_h$ approximates the function $H(X,p,u,x)$ in~\eqref{eq:new-H-operator} such that for all 
        \begin{equation*}
            \left| F_h(p,u,u(x),x) - H(X,p,u,x) \right| \leq C(h^{m_1} + d\theta^{m_2}),\quad m_1,m_2 \geq 1.
        \end{equation*} 
\end{enumerate}



The following lemmas shows the monotonicity and consistency of the proposed scheme.

\begin{proposition}[Monotonicity] \label{prop:mon-new-scheme}
    The scheme~\eqref{eq:general-form-s} is monotone.
\end{proposition}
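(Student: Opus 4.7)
The plan is to combine two distinct monotonicity properties: the set-inclusion monotonicity of the subdifferential operator $P_h^-$ established in Proposition~\ref{prop:Pm}, and the pointwise monotonicity of $F_h$ in assumption \ref{item:F-mon}. Fix $t \in \R$, $x \in \X_n$, and assume $u,v : \X_n \to \R$ with $u \leq v$. I would first dispose of the degenerate case: if $P_h^-(v,t,x) = \emptyset$, then $S_h(v,t,x) = -\infty$ and the desired inequality $S_h(u,t,x) \geq S_h(v,t,x)$ holds trivially. So I may assume $P_h^-(v,t,x) \neq \emptyset$.

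In this case, Proposition~\ref{prop:Pm} (applied with the natural extension of $P_h^-$ to a comparison threshold $t$ rather than $u(x)$) gives $P_h^-(u,t,x) \supset P_h^-(v,t,x)$, so in particular $P_h^-(u,t,x)$ is also nonempty and both $S_h(u,t,x)$ and $S_h(v,t,x)$ are defined via a maximum. Next, for every fixed $p \in P_h^-(v,t,x)$, property \ref{item:F-mon} applied to $u \leq v$ gives the pointwise inequality $F_h(p,u,t,x) \geq F_h(p,v,t,x)$.

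Combining these two ingredients, I would write
\begin{align*}
S_h(u,t,x) \;=\; \max_{p \in P_h^-(u,t,x)} F_h(p,u,t,x)
&\;\geq\; \max_{p \in P_h^-(v,t,x)} F_h(p,u,t,x) \\
&\;\geq\; \max_{p \in P_h^-(v,t,x)} F_h(p,v,t,x) \;=\; S_h(v,t,x),
\end{align*}
where the first inequality comes from enlarging the feasible set in the max (using set inclusion), and the second comes from the termwise inequality on $F_h$.

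There is no real obstacle here; the proof is essentially bookkeeping. The only subtlety is the need to handle the case $P_h^-(v,t,x) = \emptyset$ separately so that the convention $S_h = -\infty$ does not cause issues, and to make sure the two distinct meanings of ``monotone'' (decreasing set-valued map versus decreasing function) are chained in the correct direction so that the final inequality points the right way for Definition~\ref{def:monotone}.
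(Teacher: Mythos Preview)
Your proposal is correct and follows essentially the same approach as the paper's proof: handle the empty-subdifferential case separately via the $-\infty$ convention, then combine the set-inclusion monotonicity of $P_h^-$ from Proposition~\ref{prop:Pm} with the pointwise monotonicity \ref{item:F-mon} of $F_h$ to chain the inequalities. Your version is arguably a bit cleaner in that you separate the two inequality steps explicitly and work with a generic threshold $t$ throughout, which matches Definition~\ref{def:monotone} precisely.
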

\begin{proof}
    Let $u,v : \bar\Omega \rightarrow \mathbb{R}^d$ be functions such that $u \leq v$ near $x$. Suppose $P^-_{h}(v,v(x),x)$ is nonempty. By Proposition~\ref{prop:Pm} and \ref{item:F-mon}, we have
    \[
    \begin{aligned}
        S_h(u,u(x),x) &= \max_{p \in P_{h}(u,u(x),x)} F_h(p,u,u(x),x) \\
        &\geq \max_{p \in P_{h}(v,u(x),x)} F_h(p,v,u(x),x)
        = S_h(v,u(x),x).
    \end{aligned}
    \]
    If $P^-_{h}(v,v(x),x)$ is empty, then
    \[
    \begin{aligned}
        S_h(u,u(x),x) \geq -\infty
        = S_h(v,u(x),x).
    \end{aligned}
    \]
    Thus, the scheme is monotone.
\end{proof}

\begin{proposition}[Consistency] \label{prop:con}
    Suppose the function $H(X,p,\phi,x)$ in~\eqref{eq:new-H-operator} satisfies
    \begin{equation}\label{eq:condition-lipschitz-f}
        \left| H(X,p,\phi,x) - H(Y,q,\phi,y) \right| \leq 
        C \left( \left| \frac{p}{|p|} - \frac{q}{|q|} \right|
        + |x-y| \right)
    \end{equation}
    for all $X,Y \in \mathbb{R}^{d\times d}_{sym}$, $p,q \in \mathbb{R}^d$, $x,y\in\bar\Omega$, and $C$ is a constant depending on $\phi$ and $\Omega$. 
    Given $x \in \bar\Omega$, assume~\eqref{eq:delta-theta-relation} and $\phi \in C^\infty(\mathbb{R}^d)$ satisfies $|\nabla \phi(x)| > 0$. 
    \begin{enumerate}[label=(\roman*)]
        \item \label{item:con-prop-1} If $L(\nabla^2\phi(x), \nabla\phi(x)) \leq 0$, the scheme satisfies
    \begin{equation*}
        \limsup_{ \substack{h\rightarrow 0^+\\\gamma \rightarrow 0 \\ y\rightarrow x} } S_h(\phi + \gamma, \phi(y) +\gamma, y) \leq \tilde{H}^*(\nabla^2 \phi, \nabla \phi, \phi, x).
    \end{equation*}
        \item If $L(\nabla^2\phi(x), \nabla\phi(x)) > 0$, the scheme satisfies
        \begin{equation*}
            \liminf_{ \substack{h\rightarrow 0^+\\\gamma \rightarrow 0 \\ y\rightarrow x} } S_h(\phi + \gamma, \phi(y) +\gamma, y) \geq \tilde{H}_*(\nabla^2 \phi, \nabla \phi, \phi, x).
        \end{equation*}
    \end{enumerate}

\end{proposition}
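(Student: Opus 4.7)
The strategy is a direct application of the Barles--Souganidis consistency framework, combining three tools already assembled: Theorem~\ref{thm:existence} controls nonemptiness of $P_h^-$ when $L\leq 0$; Theorem~\ref{thm:P-upper-bound} forces every element of $P_h^-$ to align with $\nabla\phi$ in direction; and the approximation/Lipschitz hypotheses~\ref{item:F-H} and~\eqref{eq:condition-lipschitz-f} let $F_h$ pass to the limit. Throughout we use that $\nabla(\phi+\gamma)=\nabla\phi$ and $\nabla^2(\phi+\gamma)=\nabla^2\phi$, so the constants $A_1,A_2,C_1,C_2$ appearing in the earlier theorems are unaffected by the $\gamma$-shift.

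Part~(ii) is essentially vacuous. By continuity of $L$ in $(X,p)$ and the strict inequality $L(\nabla^2\phi(x),\nabla\phi(x))>0$, a whole neighborhood of $(\nabla^2\phi(x),\nabla\phi(x))$ still satisfies $L>0$, so $\tilde H\equiv -\infty$ near $(\nabla^2\phi(x),\nabla\phi(x),\phi(x),x)$. Its lower semicontinuous envelope is therefore $-\infty$, and the desired inequality holds automatically.

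For part~(i), take a sequence $(h_k,\gamma_k,y_k)\to(0,0,x)$ realizing the limsup. Along any subsequence where $P_{h_k}^-[\phi+\gamma_k](y_k)=\emptyset$ one has $S_{h_k}=-\infty$, which is automatically bounded above by $\tilde H^*$; so we may assume $P_{h_k}^-[\phi+\gamma_k](y_k)\neq\emptyset$ and pick $p_k$ in this set attaining the max that defines $S_{h_k}$. Since $|\nabla\phi(x)|>0$, smoothness gives $|\nabla\phi(y_k)|\geq c>0$ eventually, and the $C^2$-regularity of $\phi$ near $x$ bounds $A_2,C_1,C_2$ uniformly. Combined with the hypothesis $d\theta<h^{1+\alpha}$, Theorem~\ref{thm:P-upper-bound} then yields $w(p_k,\nabla\phi(y_k))=O(h_k)\to 0$, and since $\nabla\phi(y_k)\to\nabla\phi(x)$ we conclude $p_k/|p_k|\to\nabla\phi(x)/|\nabla\phi(x)|$. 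Property~\ref{item:F-H} gives
\[
F_{h_k}(p_k,\phi+\gamma_k,\phi(y_k)+\gamma_k,y_k)\leq H(\nabla^2\phi(y_k),p_k,\phi(y_k)+\gamma_k,y_k)+C(h_k^{m_1}+d\theta^{m_2}),
\]
and~\eqref{eq:condition-lipschitz-f} together with continuity in the value argument drives the right side to $H(\nabla^2\phi(x),\nabla\phi(x),\phi(x),x)$. Since $L(\nabla^2\phi(x),\nabla\phi(x))\leq 0$, this limit equals $\tilde H(\nabla^2\phi(x),\nabla\phi(x),\phi(x),x)\leq \tilde H^*(\nabla^2\phi(x),\nabla\phi(x),\phi(x),x)$, closing the limsup bound.

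The principal obstacle is making the directional convergence $p_k/|p_k|\to\nabla\phi(x)/|\nabla\phi(x)|$ uniform along the extraction: the denominator in Theorem~\ref{thm:P-upper-bound} must stay bounded away from zero and the numerator constants bounded above, which forces the argument to lean on the local $C^2$-control of $\phi$ and on $|\nabla\phi(x)|>0$. A secondary subtlety sits at the borderline $L(\nabla^2\phi(x),\nabla\phi(x))=0$, where Theorem~\ref{thm:existence} no longer guarantees $P_h^-\neq\emptyset$; this does not obstruct the limsup bound (the empty case yields $-\infty$), but one must also verify that $\tilde H^*$ at such a point equals $H$, which follows by approaching along the $L<0$ side and invoking continuity of $H$.
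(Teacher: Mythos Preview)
Your argument is correct, but it proceeds along a different line than the paper's. The paper handles part~(i) via a quadratic perturbation: it replaces $\phi$ by $\phi_\eps(y)=\phi(y)-\tfrac{\eps}{2}|y-x|^2$, forcing $L(\nabla^2\phi_\eps(x),\nabla\phi_\eps(x))<0$ strictly. This lets Theorem~\ref{thm:existence} guarantee $P_h^-[\phi_\eps]\neq\emptyset$ for small $h$, after which Theorem~\ref{thm:P-upper-bound} and the hypotheses~\ref{item:F-cts},~\ref{item:F-H},~\eqref{eq:condition-lipschitz-f} give an actual limit $S_h(\phi_\eps+\gamma,\ldots)\to H$. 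The limsup bound for $\phi$ itself then follows from $\phi_\eps\leq\phi$ and the monotonicity of $S_h$ (Proposition~\ref{prop:mon-new-scheme}).

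You instead bypass Theorem~\ref{thm:existence} entirely: along the extracting sequence, either $P_{h_k}^-=\emptyset$ (giving $S_{h_k}=-\infty$, trivially bounded) or it is nonempty, and Theorem~\ref{thm:P-upper-bound} alone forces $p_k/|p_k|\to\nabla\phi(x)/|\nabla\phi(x)|$. This is more economical for the one-sided limsup inequality actually being claimed, and it handles the borderline case $L=0$ without a separate perturbation step. What the paper's route buys in exchange is the two-sided statement~\eqref{eq:phi-epsilon-con}, i.e.\ genuine convergence of $S_h$ to $H$ in the strictly quasiconcave regime, which is slightly stronger than what is needed here but useful conceptually. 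Part~(ii) is handled identically in both arguments.
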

\begin{proof}

    We prove the first part of the proposition. Let $x\in\bar\Omega$ and assume $L(\nabla^2\phi(x),\nabla\phi(x)) \leq 0$. Choose ${\eps} >0$ and define $\phi_{\eps}$ a purterbation of $\phi$ such that
    \[ \phi_{\epsilon}(y) = \phi(y) - \frac{{\eps}}{2} |y-x|^2.  \] 
    Then $L(\nabla^2\phi_{\epsilon}(x),\nabla\phi_{\epsilon} (x)) < 0$. Suppose that $\phi_{\epsilon}$ satisfies
    \begin{equation}\label{eq:phi-epsilon-con}
        \lim_{\substack{h\rightarrow 0 \\ \gamma \rightarrow 0 \\ y \rightarrow x}} S_h(\phi_{\epsilon} + \gamma, \phi_{\epsilon}(y) + \gamma, y) 
        =
        H(\nabla^2 \phi_{\epsilon},\nabla \phi_{\epsilon}, \phi_{\epsilon} , x).
    \end{equation}
    By the definition of the operator,
    \begin{align*}
        \tilde H(\nabla^2 \phi_{\epsilon}, \nabla \phi_{\epsilon}, \phi_{\epsilon} , x)
        = H(\nabla^2 \phi_{\epsilon}, \nabla \phi_{\epsilon}, \phi_{\epsilon} , x)
        = H(\nabla^2 \phi,\nabla \phi, \phi , x)
        = \tilde H(\nabla^2 \phi, \nabla \phi, \phi , x).
    \end{align*}
    Thus, by the monotonicity of the scheme,
    \begin{align*}
        \limsup_{\substack{h\rightarrow 0 \\ \gamma \rightarrow 0 \\ y \rightarrow x}} S_h(\phi, \phi(y), y) \leq \limsup_{\substack{h\rightarrow 0 \\ \gamma \rightarrow 0 \\ y \rightarrow x}} S_h(\phi_{\epsilon} + \gamma, \phi_{\epsilon}(y) + \gamma, y) 
        \leq
        \tilde H^*(\nabla^2 \phi, \nabla \phi, \phi, x).
    \end{align*}
    Thus,~\ref{item:con-prop-1} is proven if~\eqref{eq:phi-epsilon-con} is shown.

    Let us show~\eqref{eq:phi-epsilon-con}. From the assumption~\eqref{eq:delta-theta-relation}, there exists $h_0$ such that the inequality~\eqref{eq:thm-existence-condition} is satisfied for all $h<h_0$.
    Furthermore, we may assume $h_0$ is small enough that $L(\nabla^2\phi_{\epsilon}(y), \nabla\phi_{\epsilon}(y)) < 0$ for all $y \in B(x, h_0)$.
    Choose $h < h_0$, $y\in\mathcal{X}_{n}$ such that $|x-y| < h$, and $\gamma > 0$. Denote by $p^* := \argmax_{p \in P_{h}[\phi](x)} F_h(p,\phi_{\epsilon} + \gamma,\phi_{\epsilon}(y) + \gamma,y)$. Then
    \begin{align}
        &\left| F_h(p^*,\phi_{\epsilon}+\gamma,\phi_{\epsilon}(y)+\gamma,y) - H(\nabla^2 \phi_{\epsilon}, \nabla \phi_{\epsilon}, \phi_{\epsilon}, x) \right| \nonumber \\ 
        &\leq \left| F_h(p^*,\phi_{\epsilon}+\gamma,\phi_{\epsilon}(y)+\gamma,y) - F_h(p^*,\phi_{\epsilon},\phi_{\epsilon}(y),y)  \right| +\left| F_h(p^*,\phi_{\epsilon},\phi_{\epsilon}(y),y) - H(\nabla^2 \phi_{\epsilon}, \nabla \phi_{\epsilon}, \phi_{\epsilon}, x) \right|.\nonumber
    \end{align}
    By~\ref{item:F-cts}, the first term converges to $0$ as $\gamma \rightarrow 0$. The second term can be bounded by
    \begin{align*}
        &\leq \left| F_h(p^*,\phi_{\epsilon},\phi_{\epsilon}(y),y) - H(\nabla^2 \phi_{\epsilon}, p^*, \phi_{\epsilon}, y) \right|
             + \left| H(\nabla^2 \phi_{\epsilon}, p^*, \phi_{\epsilon}, y) - H(\nabla^2 \phi_{\epsilon}, \nabla \phi_{\epsilon}, \phi_{\epsilon}, x) \right|\nonumber\\
        &\leq C(h^{m_1} + d\theta^{m_2}) + C \left(\left| \frac{p^*}{|p^*|} - \frac{\nabla \phi_{\epsilon}(x)}{|\nabla \phi_{\epsilon}(x)|} \right|  + |x-y|
                \right)
    \end{align*}
    where the last inequality uses~\ref{item:F-H} and~\eqref{eq:condition-lipschitz-f}.  The second term in the last line can be bounded by
    \begin{align*}
        \leq C \left(\left| \frac{p^*}{|p^*|} - \frac{\nabla \phi_{\epsilon}(y)}{|\nabla \phi_{\epsilon}(y)|} \right|  
                        + \left| \frac{\nabla \phi_{\epsilon}(y)}{|\nabla \phi_{\epsilon}(y)|} - \frac{\nabla \phi_{\epsilon}(x)}{|\nabla \phi_{\epsilon}(x)|} \right|  
                        + |x-y|
                        \right).
    \end{align*}
    By  Theorem~\ref{thm:P-upper-bound},
    \[
        \left| \frac{p^*}{|p^*|} - \frac{\nabla \phi_{\epsilon}(y)}{|\nabla \phi_{\epsilon}(y)|} \right|   \leq C(h + d\theta)
    \]
    and since $\phi_{\epsilon} \in C^\infty(\mathbb{R}^d)$,
    \begin{align*}
        \left| \frac{\nabla \phi_{\epsilon}(y)}{|\nabla \phi_{\epsilon}(y)|} - \frac{\nabla \phi_{\epsilon}(x)}{|\nabla \phi_{\epsilon}(x)|} \right|  
        &\leq \left| \frac{\nabla \phi_{\epsilon}(y)}{|\nabla \phi_{\epsilon}(y)|} - \frac{\nabla \phi_{\epsilon}(x)}{|\nabla \phi_{\epsilon}(y)|} \right|  
            + \left| \frac{\nabla \phi_{\epsilon}(x)}{|\nabla \phi_{\epsilon}(y)|} - \frac{\nabla \phi_{\epsilon}(x)}{|\nabla \phi_{\epsilon}(x)|} \right|\\
        &\leq \frac{\max_{z \in B(x,h)} |\nabla^2 \phi_{\epsilon}(z)|}{\min_{z \in B(x,h)} |\nabla \phi_{\epsilon}(z)|} |x-y| \leq C h.
    \end{align*}
    This proves~\eqref{eq:phi-epsilon-con}, and thus proves the first part of the proposition.\\

    Next, we prove the second part of the proposition. Since $\phi$ is smooth and $L(\nabla^2\phi(x),\nabla\phi(x)) > 0$, there exists $h_0$ such that $L(\nabla^2\phi(y),\nabla \phi(y)) > 0$ for all $y\in B(x,h_0)$. Thus, for any sequence $y_k \rightarrow x$, there exists $K$ such that $\tilde{H}(\nabla^2\phi, \nabla\phi,\phi,y_k) = -\infty$ for all $k>K$. Thus,
        \[ 
            \liminf_{ \substack{h\rightarrow 0^+\\\gamma \rightarrow 0 \\ y\rightarrow x} } S_h(\phi + \gamma, \phi(y) +\gamma, y) \geq -\infty =  \liminf_{k\rightarrow \infty} \tilde H(\nabla^2 \phi, \nabla \phi, \phi, y_k) \geq \tilde{H}_*(\nabla^2 \phi, \nabla \phi, \phi, x)
        \] 
    which proves the proposition. 

\end{proof}

Finally, we show the scheme $S_h$ is convergent.

\begin{theorem}\label{thm:convergence}
    Suppose the assumption~\eqref{eq:delta-theta-relation} and the strong uniqueness property~\eqref{def:strong-unique} hold. Suppose $u$ is the unique quasiconcave viscosity solution of the PDE
    \begin{equation}\label{eq:new-pde}
    \left\{
        \begin{aligned}
            H(\nabla^2 u, \nabla u, u, x) &= 0 && \text{in } \Omega\\
            u &= g && \text{on } \partial \Omega
        \end{aligned}
    \right.
    \end{equation}
    where $g:\partial\Omega \rightarrow \mathbb{R}$ is a continuous function.
    Then the numerical solutions $u_h$ of the scheme $S_h$ converges uniformly to $u$ on $\bar\Omega$.
\end{theorem}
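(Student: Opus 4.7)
The strategy is the classical Barles--Souganidis convergence framework. Monotonicity of $S_h$ is already established in Proposition~\ref{prop:mon-new-scheme}, and consistency with the modified operator $\tilde H$ is provided by Proposition~\ref{prop:con}. Strong uniqueness is assumed. Thus the remaining tasks are (i) to verify stability, (ii) to set up the half-relaxed limits and extract sub/supersolution properties, and (iii) to reconcile the modified operator $\tilde H$ with the original $H$ appearing in~\eqref{eq:new-pde}.

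First I would verify stability in the sense of Definition~\ref{def:stable}. This is typically done by producing discrete sub- and supersolution barriers: one chooses smooth strictly quasiconcave functions $\psi^\pm$ on $\bar\Omega$ that match $g$ on $\partial\Omega$ and satisfy $H(\nabla^2\psi^-, \nabla\psi^-, \psi^-, x) \leq 0 \leq H(\nabla^2\psi^+, \nabla\psi^+, \psi^+, x)$ classically. Using the consistency estimate in~\ref{item:F-H} together with Theorem~\ref{thm:existence}, these functions are discrete sub/supersolutions of $S_h$ for $h$ small, and a discrete comparison argument using monotonicity \ref{item:F-mon} yields $\psi^- \leq u_h \leq \psi^+$ on $\X_n$, giving a uniform $L^\infty$ bound independent of $h$.

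Next, with stability in hand, define the half-relaxed limits
\[
\bar u(x) := \limsup_{\substack{h \to 0^+ \\ \X_n \ni y \to x}} u_h(y), \qquad \underline u(x) := \liminf_{\substack{h \to 0^+ \\ \X_n \ni y \to x}} u_h(y),
\]
which are respectively USC and LSC on $\bar\Omega$ with $\underline u \leq \bar u$. The plan is to show $\bar u$ is a viscosity subsolution and $\underline u$ is a viscosity supersolution of $\tilde H = 0$ in the sense of Definition~\ref{def:viscosity-solution}. The argument is standard: if $\bar u - \phi$ has a strict local maximum at $x_0$, one selects $h_n \to 0$, $y_n \to x_0$, $\gamma_n \to 0$ so that $u_{h_n} - (\phi + \gamma_n)$ attains a local maximum at $y_n$ with $u_{h_n}(y_n) \to \bar u(x_0)$ and $\gamma_n = u_{h_n}(y_n) - \phi(y_n)$. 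Monotonicity gives $0 = S_{h_n}(u_{h_n}, u_{h_n}(y_n), y_n) \geq S_{h_n}(\phi + \gamma_n, \phi(y_n) + \gamma_n, y_n)$, and passing to the $\limsup$ through Proposition~\ref{prop:con} yields $\tilde H_*(\nabla^2\phi(x_0), \nabla\phi(x_0), \bar u(x_0), x_0) \leq 0$. The supersolution property for $\underline u$ is symmetric, using the second part of Proposition~\ref{prop:con}. Boundary conditions are handled in the weak viscosity sense of Definition~\ref{def:viscosity-solution} by the same limiting argument applied at $x_0 \in \partial\Omega$, noting $u_{h_n}(y_n) = g(y_n)$ whenever $y_n \in \Gamma_n$.

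The main obstacle, and the place requiring the most care, is the reconciliation between $\tilde H$ and $H$. Since by construction $\tilde H \leq H$, the subsolution property for $\tilde H$ does not automatically translate. However, $\tilde H = H$ wherever $L(\nabla^2\phi, \nabla\phi) \leq 0$, and by Lemma~\ref{lem:second-order-quasiconcave} any smooth test function touching a quasiconcave function from above at an interior point with nonvanishing gradient automatically satisfies $L \leq 0$ at the contact point. The quadratic perturbation $\phi_\epsilon = \phi - \tfrac{\epsilon}{2}|y-x_0|^2$ used in the proof of Proposition~\ref{prop:con} then lets us pass from $L \leq 0$ to $L < 0$ and back by sending $\epsilon \to 0$. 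Once we know that $\bar u$ and $\underline u$ are a subsolution and supersolution of $H = 0$ (with boundary conditions in the weak viscosity sense), the strong uniqueness hypothesis in Definition~\ref{def:strong-unique} yields $\bar u \leq \underline u$ on $\bar\Omega$. Combined with $\underline u \leq \bar u$, this forces $\bar u = \underline u =: u$, which is continuous and agrees with the unique quasiconcave viscosity solution. A standard compactness argument using $\bar u = \underline u$ then upgrades the pointwise relaxed convergence to uniform convergence of $u_h$ to $u$ on $\bar\Omega$, completing the proof.
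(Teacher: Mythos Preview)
Your overall architecture is the same Barles--Souganidis argument the paper uses, and your addition of an explicit stability step (which the paper omits) is welcome. However, there is a genuine error in your reconciliation step between $\tilde H$ and $H$.

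You claim that ``by Lemma~\ref{lem:second-order-quasiconcave} any smooth test function touching a quasiconcave function from above at an interior point with nonvanishing gradient automatically satisfies $L\leq 0$ at the contact point.'' This is false. Lemma~\ref{lem:second-order-quasiconcave} controls the Hessian of the quasiconcave function itself, not of a test function touching it from above. If $\phi\geq u$ with equality at $x_0$ then $\nabla^2\phi(x_0)\geq \nabla^2 u(x_0)$, so $L(\nabla^2\phi(x_0),\nabla\phi(x_0))\geq L(\nabla^2 u(x_0),\nabla u(x_0))$, and the right side being $\leq 0$ says nothing about the left. Concretely, take $u(x,y)=-x-y^2$ (quasiconcave, $\nabla u(0)=(-1,0)\neq 0$) and $\phi(x,y)=-x+y^2$; then $\phi\geq u$ with equality at the origin, yet $L(\nabla^2\phi(0),\nabla\phi(0))=2>0$. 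In addition, you invoke this with $\bar u$ in place of $u$, but $\bar u$ is not known to be quasiconcave at this stage of the argument; that would be circular.

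The paper avoids this issue by modifying the test function \emph{before} invoking consistency rather than after. Given $\phi$ touching $\underline u$ from below at $x_0$, it replaces $\phi$ by $\phi-K|x-x_0|^2$ for $K$ large, which still touches from below, makes the maximum strict, and forces $L(\nabla^2\phi(x_0),\nabla\phi(x_0))<0$ so that case~(i) of Proposition~\ref{prop:con} applies and yields $0\leq \tilde H^{*}$; on the subsolution side it adds $+K|x-x_0|^2$ to land in case~(ii). In other words, the sign of $L$ is engineered through the freedom one has in choosing the test function, not inferred from properties of the half-relaxed limits. Your quadratic perturbation $\phi_\epsilon$ is the right device, but it must be inserted at the level of the sub/supersolution verification itself (to select which branch of Proposition~\ref{prop:con} is available), not as a post-hoc reconciliation.
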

\begin{proof}
    Denote by $\mathcal{X}_{n(h)}$ and $\Gamma_{n(h)}$ the set of points in $\bar\Omega$ and the set of boundary points, respectively, with the number of points $n(h)$ depending on the spatial resolution $h$. Let $\bar{u}, \underbar{$u$}: \bar\Omega \rightarrow \mathbb{R}$ be defined by
    \[
        \bar{u}(x) := \limsup_{\substack{\mathcal{X}_{n(h)}\ni y \rightarrow x\\ h\rightarrow 0}} u_h(y) \quad \text{and} \quad \underbar{$u$}(x) := \liminf_{\substack{ \mathcal{X}_{n(h)}\ni y \rightarrow x\\ h\rightarrow 0}} u_h(y).
    \]
    
    We claim that $\bar{u}$ and $\underbar{$u$}$ are viscosity subsolution and supersolution of~\eqref{eq:new-pde}, respectively. First, let $x_0 \in \Omega$ and $\phi \in C^\infty(\mathbb{R}^d)$ be such that $\underbar{$u$} - \phi$ has a local minimum at $x_0$.
    Without the loss of generality, we can replace $\phi$ by $\phi(x) - \phi(x_0) + \underbar{$u$}(x_0) - K |x- x_0|^2$. By choosing $K$ large enough $\phi$ satisfies the quasiconcavity assumption~\eqref{eq:assumption-concave-phi}, and there exists $\eps>0$ such that
    \[ \underbar{$u$}(x) - \phi(x) > 0 = \underbar{$u$}(x_0) - \phi(x_0)\quad \text{for all } x \in B(x_0,\eps) \cap \bar \Omega. \]
    There exist sequences $h_k \rightarrow 0$ and $y_k \rightarrow x_0$ where $u_{h_k} - \phi$ attains the local minimum at $y_k \in B(x_0,\eps) \cap \mathcal{X}_{n(h_k)}$ for each $k$. Denote by $\gamma_k := u_{h_k}(y_k) - \phi(y_k)$. Then we have $\gamma_k \rightarrow 0$ and $u_{h_k}(x) - \phi(x) \geq \gamma_k$ for all $x\in B(x_0,\eps) \cap \mathcal{X}_{n(h_k)}$. By the definition of $u_h$ and the monotonicity of $S_h$,
    \begin{equation}\label{eq:consistency-holds}
        0 = S_h(u_{h_k}, u_{h_k}(y_k), y_k) \leq S_h(\phi + \gamma_k, \phi(y_k) + \gamma_k, y_k).
    \end{equation}
    By the consistency of $S_h$,
    \begin{align*}
        0 \leq \limsup_{k} S_h(\phi + \gamma_k, \phi(y_k) + \gamma_k, y_k)
        \leq \tilde{H}^*(\nabla^2 \phi(x_0), \nabla \phi(x_0), \phi(x_0), x_0).
    \end{align*}
    If $x_0 \in \partial \Omega$, then we can arrange it so that either $y_k \in \Gamma_{n(h_k)}$ or $y_k \in \mathcal{X}_{n(h_k)}\backslash \Gamma_{n(h_k)}$ for all $k$. In the first case, we have
    \begin{align*}
        \underbar{$u$}(x_0) = \lim_{h_k \rightarrow 0^+} u_{h_k}(y_k) \geq g(x_0),
    \end{align*}
    due to the continuity of $g$. In the second case, by the same argument as above,~\eqref{eq:consistency-holds} holds.
    Thus, $\underbar{$u$}$ is a viscosity supersolution of~\eqref{eq:new-pde}.

    The proof of $\bar{u}$ being a viscosity subsolution of~\eqref{eq:new-pde} is similar to the above proof. The only change is that given a smooth test function $\phi \in C^\infty(\mathbb{R}^d)$ such that $\bar{u} - \phi$ has a local maximum at $x_0$, we add a quadratic term to $\phi$ so that $L(\nabla^2\phi(x_0), \nabla \phi(x_0))>0$ and $x_0$ is a strict local maximum point.
     
    By definitions, $\underbar{$u$} \leq \bar{u}$ on $\bar\Omega$, and by the strong uniqueness property, $\underbar{$u$} \geq \bar{u}$ on $\bar\Omega$. Thus, we have $\underbar{$u$} \equiv \bar{u}$, and we conclude $u_h$ converges uniformly to the unique viscosity solution of~\eqref{eq:new-pde}.


\end{proof}

\subsection{Iterative scheme}
\label{sec:schemes}

To solve the global scheme (S$_h$), we propose an implicit iterative method. Given point clouds $\mathcal{X}_{n}$, the implicit iteration can be formulated by solving
\[ \begin{aligned}
S_h(u^{n}_h,u^{n+1}_h(x),x) &= 0&&\text{ for every } x \in \mathcal{X}_{n}
\end{aligned}\]
starting from some initial guess $u_h^0\in \M_h$. Using the monotonicity of the scheme $u^{n+1}$ can be computed through bisection methods.
 Since the scheme is monotone, homogeneous (for mean curvature motion), and satisfies a maximum principle, it is possible to show that the resulting solution is within $O(h)$ of the exact solution of the scheme. The alogrithm is displayed in Algorithm~\ref{alg:main}. In the algorithm, the error of $u$ is defined by 
\begin{align*}
    \text{error} = \frac{1}{|\mathcal{X}_{n}|} \sum_{x \in \mathcal{X}_{n}} \left|S_h(u,u(x),x)\right|
\end{align*}
where $|\mathcal{X}_{n}|$ denotes the total number of points in $\mathcal{X}_{n}$.

\RestyleAlgo{ruled}
\begin{algorithm}[ht!]
\caption{Implicit iterative method}\label{alg:main}

\textbf{Input:} A point cloud $\mathcal{X}_{n}$ and a function $F_h(p, u, t, x)$ in~\eqref{eq:general-form-s}.

\KwResult{Solution of the scheme $u_h$ up to $O(h)$ error.}
 \vspace{0.2cm}
 \While{error $>$ tolerance}{
    \textbf{For} each $x \in \mathcal{X}_{n}$ \textbf{ do}

    \quad    Use bisection methods to compute $u^{n+1}(x)$ from $t \mapsto S_h(u^{n}, t, x)$.

    \textbf{end}
 }
\end{algorithm}
\noindent In the numerical experiments described in Section~\ref{sec:exp}, we initialize $u^{(0)}$ using the computed solution on a coarser graph. Specifically, we compute the solution on a $\frac{N}{2} \times \frac{N}{2}$ Cartesian grid and use it as an initial guess function to compute the solution on an $N\times N$ grid. In practice, this initialization significantly accelerates the convergence of the algorithm compared to setting $u^{(0)}\equiv 0$. We believe that implementing multigrid-type methods can further improve the algorithm's performance, which we plan to explore in future projects.



\section{Applications}\label{sec:appl}

In this section, we will construct the monotone convergent schemes for the viscosity solutions of the levelset convex geometric PDEs. In particular, we construct monotone schemes for 
the Tukey depth eikonal equation in~\eqref{eq:tukey_pde},
\begin{equation*}
    |\nabla u(x)| = \int_{(y-x)\cdot \nabla u(x) = 0} \rho(y)\, dS(y) \; \text{ in } \Omega,
\end{equation*}
the mean curvature motion PDE
\begin{equation}\label{eq:MC-eikonal}
\begin{aligned}
    |\nabla u| \kappa &= f \; \text{ in } \Omega\\
    u &= 0 \; \text{ on } \partial\Omega,
\end{aligned}
\end{equation}
and the curvature flow equation
\begin{equation}\label{eq:affine-flow-eikonal}
\begin{aligned}
    |\nabla u| \kappa_+^\alpha &= f \; \text{ in } \Omega\\
    u &= 0 \; \text{ on } \partial\Omega
\end{aligned}
\end{equation}
for $\alpha \in (0,1]$ depending the dimension of the domain $\Omega$.

\subsection{Tukey Depth}\label{subsec:appl-tukey}

From~\eqref{eq:new-H-operator}, define
\begin{equation}\label{eq:H-tukey}
    H(p, u, x) = \frac{p}{|p|} \cdot \nabla u(x) - \int_{(y-x)\cdot p = 0} \rho(y)\, dS(y)
\end{equation}
where $\rho$ is a nonnegative density. In order to establish a monotone convergent scheme for the Tukey depth eikonal equation ~\eqref{eq:tukey_pde}, the task involves demonstrating that $H$ satisfies the Lipschitz condition stated in Proposition~\ref{prop:con} and defining a function $F_h$ that fulfills the requirements outlined in assumptions~\ref{item:F-mon}, \ref{item:F-cts}, and \ref{item:F-H}. 
Then we can easily construct monotone and consistent schemes $S_h$ in~\eqref{eq:general-form-s}. 

To show the nonlocal integral term within the PDE satisfies the Lipschitz condition stated in Proposition~\ref{prop:con}, we assume the data density $\rho$ satisfies the same regularity condition detailed in~\cite{molina2022tukey}, which established the existence of a unique viscosity solution of the Tukey depth eikonal equation.

\begin{lemma}\label{lem:lip-nabla-phi}
    Suppose a nonnegative density $\rho$ is Lipschitz in an open and bounded support $S \subset \bar\Omega$. Given $\phi\in C^\infty(\mathbb{R}^d)$,  the function $H$ in~\eqref{eq:H-tukey} satisfies
    \begin{equation*}
        |H(p,\phi,x) - H(q,\phi,y)| \leq C\left( \left|\frac{p}{|p|} - \frac{q}{|q|} \right| + |x-y| \right)
    \end{equation*}
    for all $x,y \in \bar\Omega$ and $p,q \in \mathbb{R}^d$, and $C$ is a constant depending on $\rho$ and $\Omega$.
\end{lemma}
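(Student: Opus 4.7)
The plan is to split $H(p,\phi,x)-H(q,\phi,y)$ into the linear ``upwind'' part and the nonlocal integral part, and bound each separately:
\begin{align*}
|H(p,\phi,x) - H(q,\phi,y)| &\leq \left|\tfrac{p}{|p|}\cdot \nabla\phi(x) - \tfrac{q}{|q|}\cdot \nabla \phi(y)\right|\\
&\quad + \left|\,\int_{(z-x)\cdot p=0}\rho(z)\,dS(z) - \int_{(z-y)\cdot q=0}\rho(z)\,dS(z)\right|.
\end{align*}
The linear term is immediate from $\phi \in C^\infty(\mathbb{R}^d)$ and compactness of $\bar\Omega$: adding and subtracting $\tfrac{q}{|q|}\cdot\nabla\phi(x)$ produces the two bounds $\|\nabla\phi\|_{L^\infty(\bar\Omega)}\bigl|\tfrac{p}{|p|}-\tfrac{q}{|q|}\bigr|$ and $\|\nabla^2\phi\|_{L^\infty(\bar\Omega)}|x-y|$.

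The nonlocal term is the main obstacle, so I would introduce the notation $\nu=p/|p|$, $\nu'=q/|q|$, and $I(\nu,x):=\int_{H(\nu,x)}\rho\,dS$, where $H(\nu,x)=\{z:(z-x)\cdot \nu=0\}$, and then split
\[
|I(\nu,x)-I(\nu',y)| \leq |I(\nu,x)-I(\nu,y)| + |I(\nu,y)-I(\nu',y)|.
\]
For the first piece (fixed normal), the hyperplane is merely translated: the change of variables $z\mapsto z+((x-y)\cdot \nu)\nu$ maps $H(\nu,y)$ onto $H(\nu,x)$ isometrically, and the Lipschitz property of $\rho$ on its bounded support $S$ yields
\[
|I(\nu,x)-I(\nu,y)| \leq \mathrm{Lip}(\rho)\,\mathcal{H}^{d-1}(H(\nu,y)\cap S)\,|x-y|,
\]
with the hypersurface measure uniformly bounded because $S$ is bounded.

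For the second piece (fixed base point, different normal), I would pick a rotation $R\in SO(d)$ mapping $\nu$ to $\nu'$, e.g.\ the rotation by angle $\theta=\arccos(\nu\cdot \nu')$ in the plane $\mathrm{span}(\nu,\nu')$ acting as the identity on its orthogonal complement. Then $R(H(\nu,y)-y)+y = H(\nu',y)$, so applying $R$ as a change of variables gives
\[
|I(\nu,y)-I(\nu',y)| \leq \mathrm{Lip}(\rho)\int_{H(\nu,y)\cap S_R}|(I-R)(z-y)|\,dS(z),
\]
where $S_R$ is the union of $S$ with its preimage under the isometry. Using $\|I-R\|\leq C\theta \leq C'|\nu-\nu'|$ together with $|z-y|\leq \mathrm{diam}(\Omega\cup S)$ and the uniform bound on $\mathcal{H}^{d-1}(H\cap S_R)$, this becomes $\leq C|\nu-\nu'|$. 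Combining the two pieces and the linear term proves the lemma, with the constant depending on $\|\phi\|_{C^2(\bar\Omega)}$, $\mathrm{Lip}(\rho)$, and the diameter and hypersurface-section measure of $S$. The only subtle point is having $\rho$ behave well outside $S$ so the change-of-variables boundary terms vanish; since $\rho$ vanishes on $\partial S$ by the regularity assumption from \cite{molina2022tukey}, this is not an issue.
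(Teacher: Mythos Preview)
Your proof is correct and follows the same overall decomposition as the paper: split $H$ into the linear piece $\tfrac{p}{|p|}\cdot\nabla\phi(x)$ and the nonlocal integral, then split the nonlocal difference into a pure translation (fixed normal) and a pure rotation (fixed base point). The linear part and the translation part are handled identically in both arguments.

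The one genuine difference is in the rotation step. The paper parametrizes the hyperplane $\{(z-x)\cdot p=0\}$ as $t\mapsto x+\tfrac{p^\perp}{|p^\perp|}\,t$ and compares the two one-dimensional integrals directly via the Lipschitz bound $|\rho(x+p't)-\rho(x+q't)|\leq \text{Lip}(\rho)\,|t|\,|p'-q'|$; this is clean but, as written, is specific to $d=2$ (it uses $p^\perp$ and a single real parameter). Your rotation-matrix argument, choosing $R\in SO(d)$ with $R\nu=\nu'$ and bounding $\|I-R\|\leq C|\nu-\nu'|$, works verbatim in every dimension and is therefore the more general version. Both approaches rely on the same implicit hypothesis you flagged at the end---that $\rho$ extended by zero outside $S$ is globally Lipschitz, which follows from $\rho|_{\partial S}=0$.
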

\begin{proof}
    Define
    \begin{align*}
        H_1(p,u,x) &= \frac{p}{|p|} \cdot \nabla u(x) \\
        H_2(p,u,x) &= \int_{(y-x)\cdot p = 0} \rho(y)\, dS(y).
    \end{align*}

    \noindent First, we will show $H_1$ is Lipschitz. Choose $x\in\Omega$ and $p,q \in \mathbb{R}^d$. Then
    \begin{align*}
        |H_1(p,\phi,x) - H_1(q,\phi,x)|
        \leq \left| \frac{p}{|p|}  - \frac{q}{|q|}  \right| \max_{z\in\Omega}|\nabla \phi(z)|.
    \end{align*}
    Choose $x,y\in\Omega$ and $p \in \mathbb{R}^d$. Then
    \begin{align*}
        |H_1(p,\phi,x) - H_1(p,\phi,y)|
        \leq \left| \nabla \phi(x) - \nabla \phi(y) \right| \leq | x-y | \max_{z\in\Omega}|\nabla^2 \phi(z)|.
    \end{align*}
    Thus, $H_1$ satisfies~\eqref{eq:condition-lipschitz-f}.

    To show $H_2$ is Lipschitz, first fix $p \in \mathbb{R}^d$. For all $x,y \in \Omega$,
    \begin{align*}
        |H_2(p,u,x) - H_2(p,u,y)| 
        &= \left| \int_{(z-y)\cdot p=0} \rho(z+(x-y))\,dS(z) - \int_{(z-y)\cdot p=0} \rho(z)\,dS(z) \right|\\
        &\leq \int_{(z-y)\cdot p=0} \left| \rho(z+(x-y)) - \rho(z) \right| \,dS(z)\\
        &\leq C|x-y| \int_{(z-y)\cdot p=0} \mathds{1}_{\Omega}(z) \,dS(z)\\
        &\leq C |x-y|
    \end{align*}
    where the second ineqaulity uses $\rho$ being Lipschitz and $\mathds{1}_\Omega$ is an indicator function on $\Omega$ and $C$ is a constant depending on $\rho$ and $\Omega$.

    Next, we fix $x \in \Omega$. Choose any $p,q \in \mathbb{R}^d$ and define $p' = \frac{p^\perp}{|p^\perp|}$ and $q' = \frac{q^\perp}{|q^\perp|}$. Then, using the change of variables,
    \begin{align*}
        |H_2(p,u,x) - H_2(q,u,x)| 
         &= \left| \int^\infty_{-\infty} \rho(x + p' t)\,dt - \int^\infty_{-\infty} \rho(x + q't)\,dt \right|\\
         &\leq \int^\infty_{-\infty} \left| \rho(x + p' t)- \rho(x + q't) \right| \,dt\\
         &\leq C |p' - q'| \int^\infty_{-\infty} t \mathds{1}_\Omega(t) \,dt\\
         &\leq C|p' - q'|
         = C \left|\frac{p}{|p|} - \frac{q}{|q|} \right|.
    \end{align*}
    where the second inequality use $\rho$ being Lipschitz. Again, $C$ is a constant depending on $\rho$ and $\Omega$. This proves the lemma.
    
\end{proof}

Note that the condition stated in Lemma~\ref{lem:lip-nabla-phi} requires $\rho$ to be Lipschitz continuous within an open and bounded support in the domain. However, it is worth noting that this condition can be relaxed to some extent. In Section~\ref{subsec:tukey-depth}, we illustrate, through numerical examples, that the proposed numerical scheme is capable of approximating the solution when the density $\rho$ is not Lipschitz in $\bar\Omega$.

\subsection{Curvature motion}\label{sec:curvature-motion}

We present our monotone schemes in the simple setting of curvature motion of a convex curve in the plane. This is described by the eikonal equation
\begin{equation}\label{eq:MC}
\left\{ 
\begin{aligned}
|\nabla u|\kappa &= 1&&\text{in } \Omega\\
u &= 0&&\text{on } \partial \Omega,
\end{aligned}
\right.
\end{equation}
where $\Omega\subset \R^2$ is a convex and bounded set, $\partial \Omega$ is the initial curve, and $\kappa(x)$ is the curvature of the level set of $u$ passing through $x$, which is given by $\kappa(x)=-\text{div}(\nabla u/|\nabla u|)$. In this setting, the level sets $\{u=t\}$ evolve with normal velocity $\vb{v}=\kappa$. Since the initial curve $\partial \Omega$ is convex, all the super-level sets $\{u \geq t\}$ of $u$ will be convex, hence $u$ is quasiconcave.

The eikonal equation \eqref{eq:MC} has a particularly simple form, since we can formally expand $\kappa$ to find
\begin{equation}\label{eq:MC2}
{-u_{\eta \eta}=|\nabla u|\kappa = 1,}
\end{equation}
where $\eta = \frac{\nabla u^\perp}{|\nabla u|}$ is a unit vector orthogonal to $\nabla u$, and $u_{\eta \eta} = \eta \cdot  \nabla^2 u\,\eta$. Hence, the problem boils down to constructing a monotone scheme for the pure second derivative $u_{\eta \eta}$. If the direction $\eta$ in \eqref{eq:MC2} was fixed and did not depend on $\nabla u$, then the problem would be simple. The difficulty is that $\eta$ depends on $\nabla u$.

We can directly apply our subdifferential $P^-_h[u](x)$ in this setting. For $p \in V_{h}(x)$ we define $p^\perp :=(-p_2,p_1)$. The vector $p^\perp$ plays the role of $\eta$ from \eqref{eq:MC2}.\footnote{We assume our stencil $N_{h}(x)$ is chosen symmetrically, so that $p \in V_{h}(x)$ if and only if $p^\perp \in V_{h}(x)$.} Our scheme for \eqref{eq:MC} is 
\begin{equation}\label{eq:Sm}
{\max_{p \in P^-_h[u](x)} -\Delta^h_{p^\perp p^\perp}u_h(x) = 1 \ \ \text{ for  } x \in \mathcal{X}_{n},}
\end{equation}
where $\Delta^h_{qq}$ is defined as
\[\Delta^h_{qq}u(x):= \frac{u(x+q) - 2u(x) + u(x-q)}{|q|^2}\]
The main idea is that we replaced the \emph{selection} of the direction $\eta=\nabla u^\perp$ with the maximum over the subdifferential $P^-_h[u](x)$.
It is easy to see that for $p \in V_{h}(x)$ the negative of the second order finite difference, i.e., $-\Delta^h_{qq}$ is a monotone scheme, and so it follows from Proposition \ref{prop:Pm} that \eqref{eq:Sm} is \emph{monotone}. 

The schemes we consider here are wide stencil schemes, inspired by schemes for degenerate elliptic equations such as the Monge-Amp\`ere equation~\cite{oberman2008wide}. 
In particular, we take the stencil $V_{h}$ to be independent of $x$. See Figure \ref{fig:demo} for a depiction of the direction set $V_{h}$ for the standard 9 and 25 point stencils.
\begin{figure}
\centering
\hfill
\subfloat[9 points]{\includegraphics[width=0.2\textwidth]{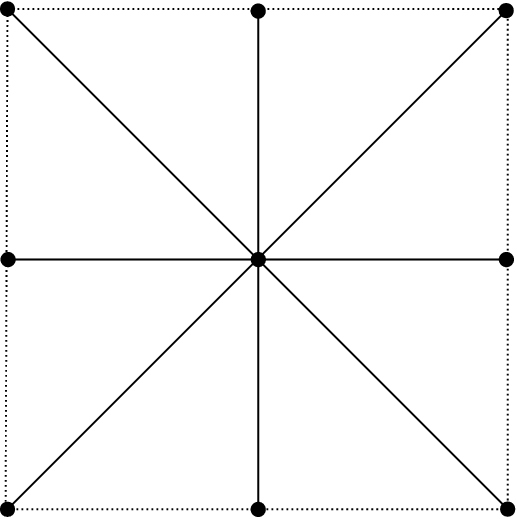}}
\hfill
\subfloat[25 points]{\includegraphics[width=0.4\textwidth]{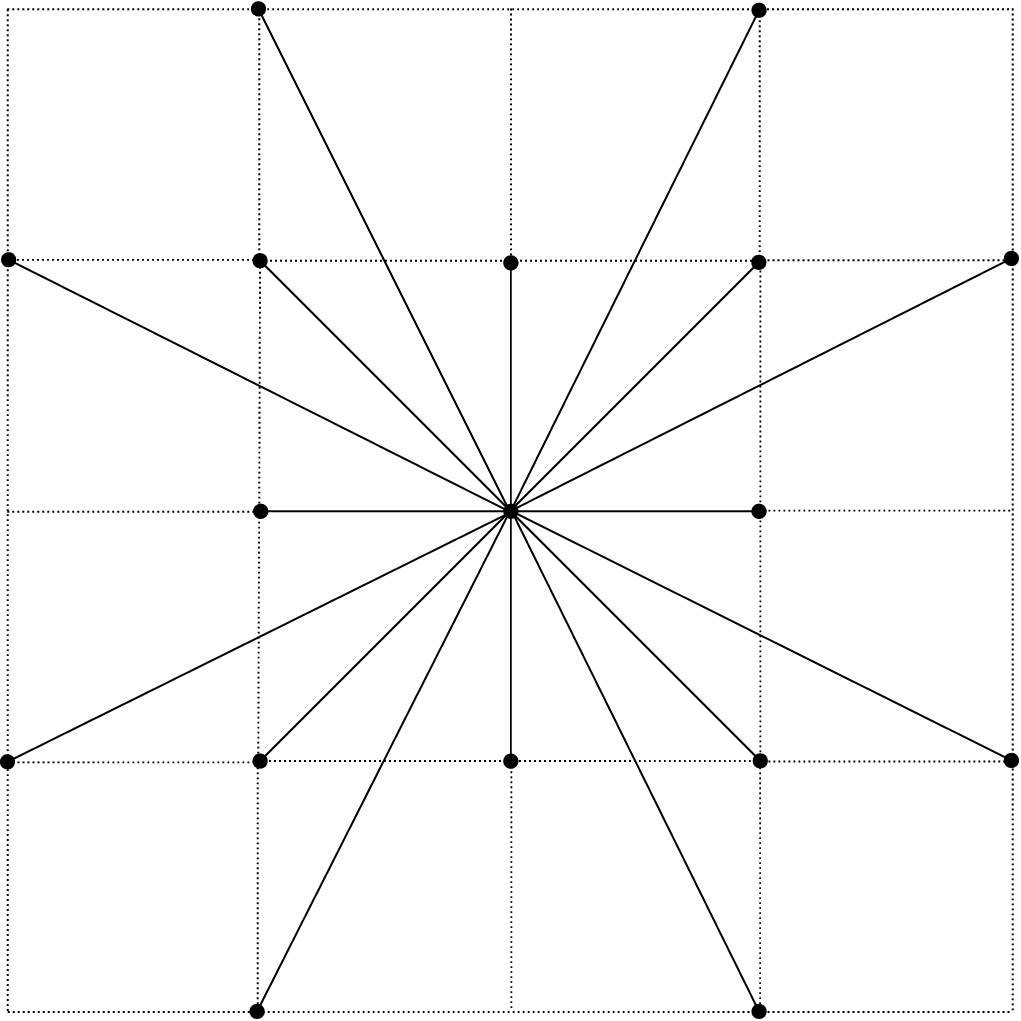}}
\hfill
\caption{Depiction of the available directions $V_{h}$ in the 9 and 25 point stencils.}
\label{fig:demo}
\end{figure}

\begin{remark}
We can easily extend the scheme to motion by a power $\alpha \in (0,1]$ of mean curvature:
\begin{equation}\label{eq:p-alpha}
\left\{ \begin{aligned}
|\nabla u|\kappa_+^\alpha &= f&&\text{in } \Omega\\
u &= 0&&\text{on } \partial \Omega.
\end{aligned}\right.
\end{equation}
The reformulated equation corresponding to \eqref{eq:MC2} becomes
\[|\nabla u|^{1-\alpha} (-u_{\eta\eta})^\alpha_+ = f \ \ \text{in } \Omega\]
and the corresponding scheme is
\begin{equation}\label{eq:scheme-curvature-motion}
\max_{p \in P_h^-[u](x)} |\nabla_p u(x)|^{1-\alpha}(-\Delta_{p^\perp p^\perp}u(x))^\alpha_+ = f(x) \ \  \text{ for }  \ \ x\in \X_n,
\end{equation}
where $\nabla_p u$ is any monotone discretization  gradient, in this case
\[\nabla_p u(x):= \frac{u(x) - u(x-p)}{|p|}.\]
\end{remark}
\begin{remark}
We can furthermore extend the scheme to certain functions of mean curvature, namely
\[ \left\{ \begin{aligned}
|\nabla u|g(\kappa_+) &= f&&\text{in } \Omega\\
u &= 0&&\text{on } \partial \Omega,
\end{aligned}\right.\]
where $g:[0,\infty)\to [0,\infty)$ is increasing and satisfies
\begin{equation}\label{eq:gcond}
g'(s) \leq s^{-1}g(s) \ \ \text{ for all } s>0.
\end{equation}
The corresponding scheme is
\[|\nabla_p u(x)|\max_{p \in P_h^-[u](x)} g\left(\frac{(-\Delta_{p^\perp p^\perp}u(x))_+}{|\nabla_p u(x)|}\right) = f(x) \ \  \text{ for }  \ \ x\in \X_n.\]
The condition \eqref{eq:gcond} ensures that $s \mapsto sg(t/s)$ is increasing for all $t\geq 0$, so that the scheme is monotone. This requirement is satisfied by $g(s)=s^\alpha$ for $0 < \alpha \leq 1$, but also by other monotone functions, such as
\[g(s) = \left(\log\left(\frac{1}{s} + e\right)\right)^{-1}.\]
\end{remark}

We now establish consistency of the above schemes.
\begin{lemma}\label{lem:lip-curvature}
    Define a function
    \begin{equation*}
        H(p,\phi,x) = \frac{p \cdot \nabla^2 \phi(x) \, p}{|p|^2}.
    \end{equation*}
    Then, given $\phi\in C^\infty(\mathbb{R}^d)$, the function $H$ satisfies
    the Lipschitz condition stated in Proposition~\ref{prop:con}.
\end{lemma}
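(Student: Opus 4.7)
The plan is to write $H(p,\phi,x) = \hat p \cdot \nabla^2\phi(x)\hat p$ where $\hat p := p/|p|$, and then bound the difference $H(p,\phi,x) - H(q,\phi,y)$ by splitting it with the triangle inequality into a ``direction'' piece and a ``position'' piece:
\[
H(p,\phi,x) - H(q,\phi,y) = \bigl(\hat p \cdot \nabla^2\phi(x)\hat p - \hat q \cdot \nabla^2\phi(x)\hat q\bigr) + \bigl(\hat q \cdot \nabla^2\phi(x)\hat q - \hat q \cdot \nabla^2\phi(y)\hat q\bigr).
\]
Since $\bar\Omega$ is bounded and $\phi\in C^\infty(\mathbb{R}^d)$, the quantities $M_2 := \sup_{z\in\bar\Omega} \|\nabla^2\phi(z)\|$ and $M_3 := \sup_{z\in\bar\Omega} \|\nabla^3\phi(z)\|$ are finite, and these are what will supply the constant $C$ depending on $\phi$.

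For the first (direction) piece, I would use the bilinear identity $\hat p \cdot A\hat p - \hat q\cdot A\hat q = (\hat p-\hat q)\cdot A\hat p + \hat q\cdot A(\hat p-\hat q)$ applied with $A = \nabla^2\phi(x)$, which immediately yields the bound $2M_2\,|\hat p - \hat q|$. For the second (position) piece, since $\hat q$ is a unit vector, it is bounded by the operator norm $\|\nabla^2\phi(x) - \nabla^2\phi(y)\|$, which by the mean value theorem applied component-wise is at most $M_3|x-y|$.

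Combining these with the triangle inequality gives
\[
|H(p,\phi,x) - H(q,\phi,y)| \leq 2M_2\left|\frac{p}{|p|} - \frac{q}{|q|}\right| + M_3 |x-y|,
\]
so taking $C = \max(2M_2, M_3)$ completes the verification of the Lipschitz condition~\eqref{eq:condition-lipschitz-f} in Proposition~\ref{prop:con}. There is no real obstacle here: once one observes that $H$ depends on $p$ only through the unit direction $\hat p$, the estimate is a routine bilinear-form computation together with the standard Lipschitz bound on $\nabla^2\phi$. The only thing to watch is to track that the constant depends only on $\phi$ and $\bar\Omega$ (through $M_2$ and $M_3$), and not on $p$, $q$, $x$, or $y$ individually, which the identity above ensures.
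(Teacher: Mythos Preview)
Your proposal is correct and takes essentially the same approach as the paper: both split via the intermediate term $\hat q\cdot\nabla^2\phi(x)\,\hat p$ (equivalently, your bilinear identity) to get the $2M_2|\hat p-\hat q|$ bound on the direction piece, and both use the Lipschitz bound on $\nabla^2\phi$ via $M_3$ for the position piece. The only cosmetic difference is that the paper presents the two pieces as separate estimates (fix $x$, vary $p,q$; then fix $p$, vary $x,y$) rather than combining them in a single line as you do.
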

\begin{proof}
    Choose $x\in\bar\Omega$ and $p,q \in \mathbb{R}^d$. Then
    \begin{align*}
        |H(p,\phi,x) - H(q,\phi,x)|
        &\leq \left| \frac{p \cdot \nabla^2 \phi(x) \, p}{|p|^2} - \frac{p \cdot \nabla^2 \phi(x) \, q}{|p| |q|} \right|
        + \left| \frac{p \cdot \nabla^2 \phi(x) \, q}{|p| |q|} - \frac{q \cdot \nabla^2 \phi(x) \, q}{|q|^2} \right|\\
        &\leq 2 \left|\frac{p}{|p|} - \frac{q}{|q|} \right| \max_{z\in\bar\Omega} |\nabla^2 \phi(z)|.
    \end{align*}
    Choose $x,y\in\bar\Omega$ and $p \in \mathbb{R}^d$. Then
    \begin{align*}
        |H(p,\phi,x) - H(p,\phi,y)|
        &\leq \left| \frac{p \cdot \left(\nabla^2 \phi(x) - \nabla^2 \phi(y) \right) \, p}{|p|^2} \right|
        \leq |x-y| \max_{z\in\bar\Omega} |\nabla^3\phi(z)|.
    \end{align*}
    This concludes the lemma.
\end{proof}

\subsection{Extensions to higher dimensions}
\label{sec:hd}

We briefly discuss here how the schemes naturally extend to higher dimensions. Consider $d=3$.  We can formulate the scheme to solve motion by mean curvature, Gauss curvature, or more general functions of the principal curvatures. For mean curvature, we wish to solve
\[\left\{ \begin{aligned}
|\nabla u|\kappa_M &= f&&\text{in } \Omega\\
u &= 0&&\text{on } \partial \Omega,
\end{aligned}\right.\]
where $\kappa_M(x)$ is the mean curvature of the level surface of $u$ passing through $x$, given by
\[\kappa_M(x) = -\text{div}\left(\frac{\nabla u}{|\nabla u|}\right) = \frac{u_{\xi\xi} - \Delta u}{|\nabla u|},\]
where $\xi = \nabla u/|\nabla u|$. If $\eta_1,\eta_2$ is any orthonormal basis for $\xi^\perp$, we can write
\[\Delta u = u_{\xi\xi} + u_{\eta_1\eta_1} + u_{\eta_2\eta_2},\]
and therefore we can write $\kappa_M$ as 
\begin{equation}\label{eq:H}
\kappa_M(x) = -\frac{u_{\eta_1\eta_1} + u_{\eta_2\eta_2}}{|\nabla u|}.
\end{equation}
This allows us to rewrite the equation as
\[-(u_{\eta_1\eta_1} + u_{\eta_2\eta_2}) = f \ \ \text{in } \Omega,\]
and the corresponding scheme would be
\[\max_{p \in P_h^-[u](x)} (-\Delta_{v_1(p) v_1(p)}u(x) - \Delta_{v_2(p)v_2(p)}u(x)) = f(x) \ \ \text{ for } \ \ x\in \X_h,\]
where $v_1(p),v_2(p)\in V_{h}$ are an orthonormal basis for $p^\perp$. The Laplacian is rotationally invariant, so the choice of $v_1(p)$, and $v_2(p)$ is not important. 

The affine flow in higher dimensions corresponds to motion of a surface with velocity proportional to $\kappa_G^\frac{1}{d+1}$ where $\kappa_G$ denotes Gauss curvature. Since $d=3$, we wish to solve
\[\left\{ \begin{aligned}
|\nabla u|\kappa_G^\frac{1}{4} &= f&&\text{in } \Omega\\
u &= 0&&\text{on } \partial \Omega.
\end{aligned}\right.\]
We can write Gauss curvature in the level set formulation as
\[\kappa_G = \frac{\nabla u\cdot \cof(-\nabla^2u)\nabla u}{|\nabla u|^4}.\]
If $O$ is any orthogonal transformation such that $O\nabla u(x)=|\nabla u(x)|e_3$, then we have
\[\kappa_G = \frac{O\nabla u \cdot \cof(-O\nabla^2uO^T)O\nabla u\rangle}{|\nabla u|^4} = \frac{\det([-O\nabla^2uO^T]_{33})}{|\nabla u|^2},\]
where $[A]_{33}$ denotes the (3,3)-minor of the matrix $A$.
This is similar to the Monge-Amp\`ere equation restricted to the space orthogonal to $\nabla u$. We can use Hadamard's determinant identity, as was used for Monge-Amp\`ere in~\cite{froese2011convergent}, to write
\[\kappa_G = \min_{\{v_1,v_2\}} \frac{(-u_{v_1v_2})_+(-u_{v_2v_2})_+}{|\nabla u|^2},\]
where the minimum is over all orthonormal bases $\{v_1,v_2\}$ of $\nabla u^\perp$. The corresponding monotone discretization scheme is
\[\max_{p \in P_h^-[u](x)} |\nabla_p u|^\frac{1}{2}\min_{\{v_1,v_2\} \in p^\perp} (-\Delta_{v_1 v_1}u)^\frac{1}{4}_+(-\Delta_{v_2v_2}u)^\frac{1}{4}_+ = f(x) \ \ \text{ for } \ \ x\in \X_h,\]
where $p^\perp$ denotes the collection of orthonormal bases of the space orthogonal to $p$ consisting of vectors $v_1,v_2 \in V_{h}$.

\section{Numerical implementation and experiments}\label{sec:exp}

In this section, we present numerical results using the proposed wide stencil finite difference scheme (Algorithm~\ref{alg:main}) to solve Hamilton-Jacobi equations in various settings. Throughout this section we will assume that the domain $\Omega = [0,1]^d$ is the unit square in $\mathbb{R}^d$. The numerical simulations in this section were coded in C++ and Python and were run on a 2019 MacBook Pro with a $2.6$ GHz 6-core processor and $16$ GB RAM. The first set of experiments (Section~\ref{subsec:eikonal}) computes the solutions of a simple eikonal equation on unstructured point clouds in $\mathbb{R}^2$ and $\mathbb{R}^3$ with various boundary conditions. The second set of experiments (Section~\ref{subsec:affine-flows}) computes the solution of the affine flow on regular rectangular grids in $\mathbb{R}^2$ with various boundary conditions. The third set of experiments (Section~\ref{subsec:tukey-depth}) computes the solutions of the Tukey depth eikonal equation on unstructured point clouds. Lastly, in the third set of experiments (Section~\ref{subsec:mnist}), we use the proposed algorithm to compute the Tukey depth measure on more complex dataset such as MNIST \cite{lecun1998gradient} and Fasion-MNIST dataset \cite{xiao2017fashion}.

\subsection{Eikonal equation}\label{subsec:eikonal}
In this set of experiments, we use Algorithm~\ref{alg:main} to solve the simple eikonal equation on unstructured point clouds in $\mathbb{R}^d$.
\begin{equation}\label{eq:eikonal-exp}
\left\{
\begin{aligned}
    |\nabla u| &= f \quad \text{in } \Omega\\
    u &= 0 \quad \text{on } \partial\Omega,
\end{aligned}
\right.
\end{equation}
where the function $f$ is an indicator function on a set $E \subset \Omega$ such that $f=1$ on $E$ and $0$ otherwise. 
We consider three different shapes for $E$: (1) the box, (2) a rotated ellipse, and (3) two disjoint balls (Figure~\ref{fig:exp1-f}).

\begin{figure}[!ht]
    \centering
    \begin{subfigure}[t]{0.25\textwidth}
        \centering
        \includegraphics[width=\textwidth]{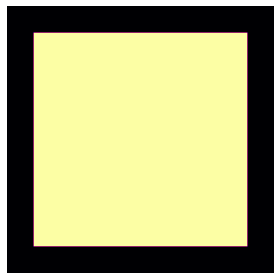}
        \caption{Square}
    \end{subfigure}
    \hfill
    \begin{subfigure}[t]{0.25\textwidth}
        \centering
        \includegraphics[width=\textwidth]{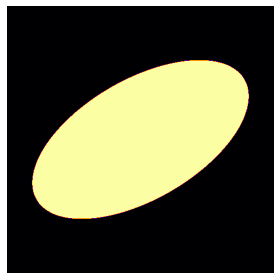}
        \caption{Ellipse}
    \end{subfigure}
    \hfill
    \begin{subfigure}[t]{0.25\textwidth}
        \centering
        \includegraphics[width=\textwidth]{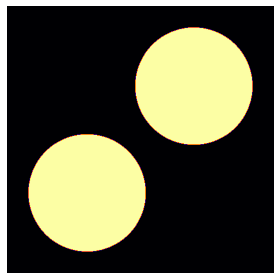}
        \caption{Two balls}
    \end{subfigure}
    \hfill
    \caption{The indicator function $f$ with three different shapes for the set $E$ in~\eqref{eq:eikonal-exp}. Black pixels and bright pixels indicate $0$ and $1$, respectively.}
    \label{fig:exp1-f}
\end{figure}

Let $n$ be the number of points in the unstructured point cloud $\mathcal{X}_{n} \subset \Omega$. We define the set of neighbors $N_h(x)$ for each $x\in\mathcal{X}_{n}$ by constructing $k$-Euclidean distance nearest neighbor ($k$NN) graphs from $\mathcal{X}_{n}$  with $k=20$ (where $k$ represents the number of neighbors). The numerical scheme to solve the PDE is
\[
    S_h(u,u(x),x) = \begin{cases}
       \displaystyle \max_{p \in P^-_h[u](x)} \nabla_p u(x) - f(x) & \text{if } P^-_h[u](x) \neq \emptyset\\
        -\infty & \text{otherwise}
    \end{cases}
\]
which is proven to be monotone and consistent in the preceding sections.
Given an initial guess $u^{(0)} \equiv 0$, 
 use Algorithm~\ref{alg:main} to iterate the algorithm to compute the solution of~\eqref{eq:eikonal-exp} on $\mathcal{X}_{n}$ until the convergence. The experiment was repeated for two different dimensions ($d=2,3$) and different number of points ($n=1000,2000,\cdots,16000$). The computation time and the total number of iterations to compute the solutions are displayed in Table~\ref{tab:time-1}. Figure~\ref{fig:exp1} shows the computed solutions on $\mathbb{R}^2$ with $8000$ data points. Note that the algorithm converged fastest on the two balls domain and slowest on the square domain, showing that the rate of convergence depends on the convexity of the domain. This is due to the fact that the scheme $S_h$ tests whether the subdifferential set is empty or not which is equivalent to testing whether the function is locally strictly quasiconcave at $x$. In a square domain, the point $x$ near the flat surface of the square requires a stricter condition on the angular resolution $d\theta(x)$ so that the subdifferential set is nonempty. The emptiness of subdifferential sets near flat surface could slow down the convergence of the algorithm.  We note that there are many faster numerical methods for solving the eikonal equation, such as fast marching \cite{sethian1999fast,sethian1996fast} and fast sweeping \cite{zhao2005fast}. The point of these experiments is just to illustrate our methods and their computational complexity on simple equations.

\begin{table}[!ht]
\centering
\begin{tabular}{|c|c|c|c|c|c|c|c|}
 \hline
& &\multicolumn{2}{c|}{Square} & \multicolumn{2}{c|}{Ellipse} & \multicolumn{2}{c|}{Two balls}\\
\hline
$d$ & $n$ & Iterations & Time & Iterations& Time & Iterations& Time\\
\hline
\multirow{3}{*}{2} & 1000    & 28  & 0.25s & 23  & 0.25s & 19  & 0.17s\\
                   & 2000    & 38  & 0.66s & 26  & 0.47s & 24  & 0.40s\\
                   & 4000    & 49  & 1.61s & 37  & 1.31s & 33  & 1.18s\\
                   & 8000    & 70  & 4.92s & 47  & 3.30s & 39  & 2.39s\\
 \hline
\multirow{3}{*}{3} & 4000    & 46  & 1.57s & 29  & 1.00s & 23  & 0.81s\\
                   & 8000    & 62  & 3.93s & 29  & 1.90s & 30  & 2.04s\\
                   & 16000   & 72  & 9.21s & 44  & 5.88s & 20  & 2.70s\\
\hline
\end{tabular}
\caption{The number of iterations and computation time (Section~\ref{subsec:eikonal}).}
\label{tab:time-1}
\end{table}

\begin{figure}[!ht]
    \centering
    \begin{subfigure}[t]{0.32\textwidth}
        \centering
        \includegraphics[width=\textwidth]{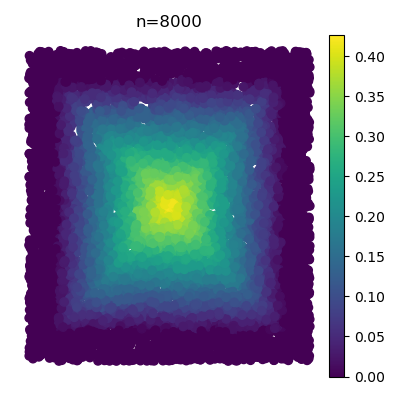}
    \end{subfigure}
    \hfill
    \begin{subfigure}[t]{0.32\textwidth}
        \centering
        \includegraphics[width=\textwidth]{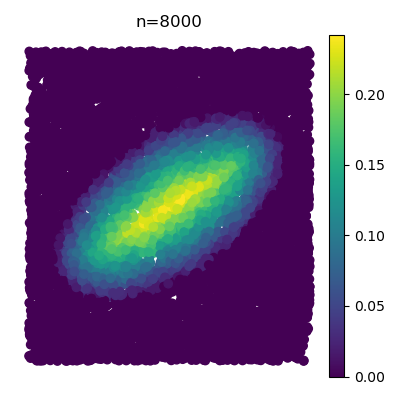}
    \end{subfigure}
    \hfill
    \begin{subfigure}[t]{0.32\textwidth}
        \centering
        \includegraphics[width=\textwidth]{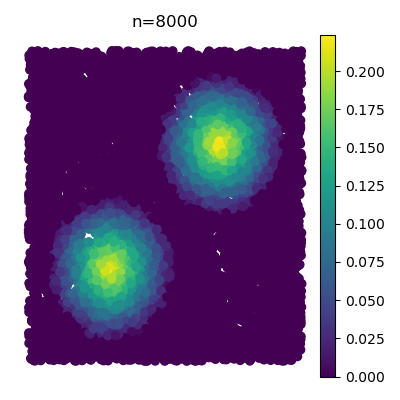}
    \end{subfigure}
    \hfill
    \caption{Computed solutions of the eikonal equation~\eqref{eq:eikonal-exp} on an unstructured point cloud in $\mathbb{R}^2$ with 8000 data points. The first image shows the result from a square domain, the second image shows from an ellipse domain, and the last image shows from two balls domain.}
    \label{fig:exp1}
\end{figure}

\subsection{Curvature motion PDEs}\label{subsec:affine-flows}

In this set of experiments, we use Algorithm~\ref{alg:main} to solve curvature motion PDEs on 2D and 3D Cartesian grids. First, we consider the affine flows in the 2D domain $\Omega=[0,1]^2$
\begin{equation}\label{eq:exp-affine-flow}
\left\{
    \begin{aligned}
        |\nabla u| \kappa^{1/3}_+ &= f \quad \text{in } \Omega\\
        u &= 0 \quad \text{on } \partial \Omega,
    \end{aligned}
\right.
\end{equation}
which corresponds to~\eqref{eq:p-alpha} with $\alpha = 1/3$. The function $f$ is chosen as the indicator function of the square, ellipse, and two balls domains, as in the preceding experiment.
As it was noted in Section~\ref{sec:curvature-motion}, the wide stencil scheme for the affine flow requires symmetry of the point cloud. Thus, we compute the solutions on a Cartesian grid with a $7\times7=49$ point stencil. We considered 3 different shapes as in the preceding experiment: (1) the box, (2) a rotated ellipse,  and (3) two disjoint balls. 

We employed the convergent numerical scheme $S_h$ in~\eqref{eq:scheme-curvature-motion} to compute the viscosity solutions of~\eqref{eq:exp-affine-flow} on grids of dimensions $32\times32$, $64\times64$, and $128\times128$.  The contour plots of the solutions for the box, the ellipse, and two balls are shown in Figure~\ref{fig:exp2}. It should be noted that the solution of affine flows is unique only when $f>0$ and is not unique when $f\geq0$. We provide examples of nonunique solutions in Figures~\ref{fig:exp2-twoballs-0} and~\ref{fig:exp2-twoballs-1}, where Figure~\ref{fig:exp2-twoballs-0} shows the computed solution with the initial guess function $u^{(0)}\equiv 0$ and Figure~\ref{fig:exp2-twoballs-1} shows the computed solution with $u^{(0)}\equiv 1$. The quantitative results of the experiments are presented in Table~\ref{tab:affine-flow-result-1}.

Next, we consider the mean curvature PDE in 3D domain $\Omega=[0,1]^3$ given by 
\begin{equation}\label{eq:MC-exp}
    \left\{ 
    \begin{aligned}
    |\nabla u|\kappa &= f&&\text{in } \Omega\\
    u &= 0&&\text{on } \partial \Omega.
    \end{aligned}
    \right.
\end{equation}
Again, the function $f$ serves as an indicator function. In this experiment, we discretize the domain using a grid of size $50\times 50 \times 50$. The solution of the PDE was computed employing Algorithm~\ref{alg:main} with stencils of size $7\times 7\times 7$. Figure~\ref{fig:exp-curvature-pde-3d} presents two computed solutions with two different $f$. The left plot illustrates the numerical solution of the PDE for the case where $f=1$ everywhere in $\Omega$, while the right plot depicts the solution where $f=1$ in two separate spherical regions centered at $(0.3,0.3,0.3)$ and $(0.7,0.7,0.7)$, each with a radius of $0.3$.

\begin{figure}[!ht]
    \centering
    \begin{subfigure}[t]{0.24\textwidth}
        \centering
        \includegraphics[width=\textwidth]{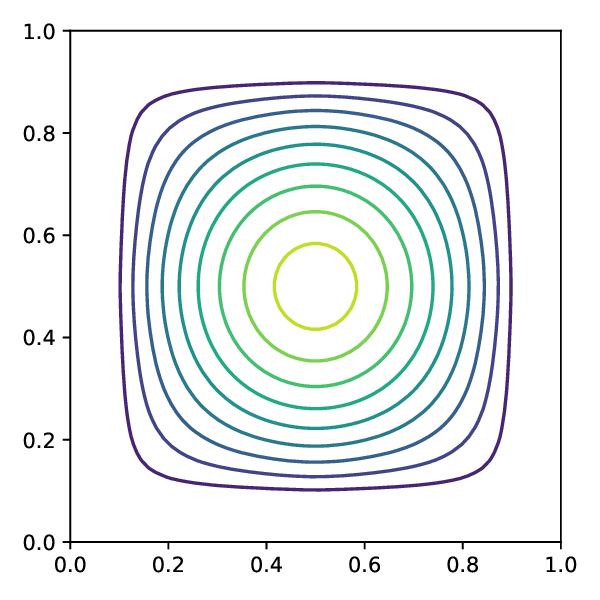}
        \caption{Square}
    \end{subfigure}
    \hfill
    \begin{subfigure}[t]{0.24\textwidth}
        \centering
        \includegraphics[width=\textwidth]{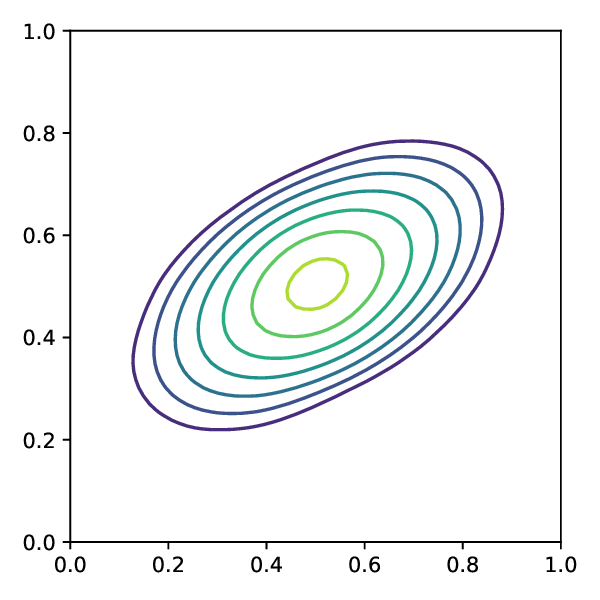}
        \caption{Ellipse}
    \end{subfigure}
    \hfill
    \begin{subfigure}[t]{0.24\textwidth}
        \centering
        \includegraphics[width=\textwidth]{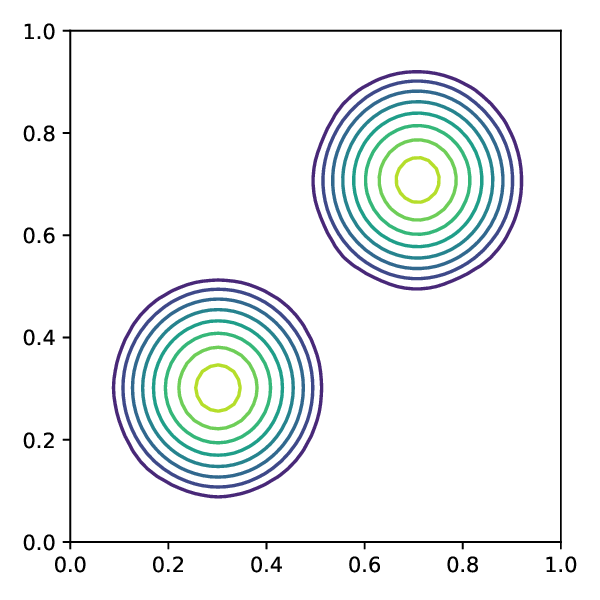}
        \caption{Two balls ($u^{(0)}\equiv 0$)}
        \label{fig:exp2-twoballs-0}
    \end{subfigure}
    \hfill
    \begin{subfigure}[t]{0.24\textwidth}
        \centering
        \includegraphics[width=\textwidth]{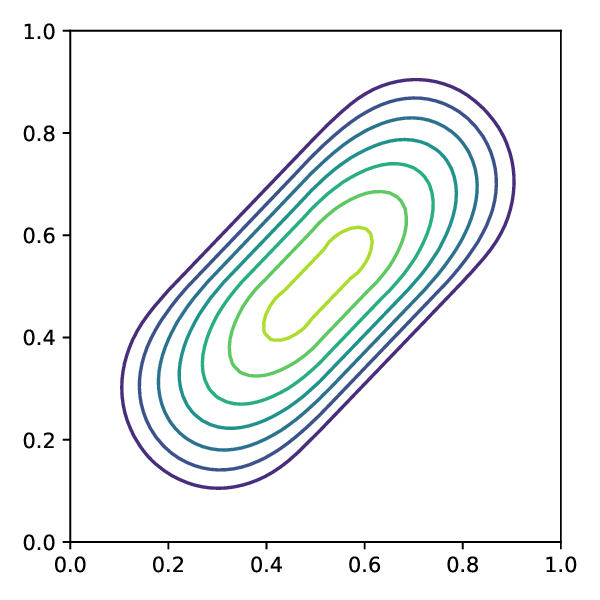}
        \caption{Two balls ($u^{(0)}\equiv 1$)}
        \label{fig:exp2-twoballs-1}
    \end{subfigure}
    \hfill
    \caption{Computed solutions of the affine flows~\eqref{eq:exp-affine-flow} with $\alpha = 1/3$ on $128\times 128$ grid with $7\times7$ stencils. Figures~\ref{fig:exp2-twoballs-0} and ~\ref{fig:exp2-twoballs-1} show two different solutions given different initial guess function $u^{(0)}$.}
    \label{fig:exp2}
\end{figure}



\begin{table}[th!]
\centering
\begin{tabular}{|c|c|c|c|c|c|}
\hline
\multirow{2}{*}{Domain} & \multirow{2}{*}{Error tolerance} & \multicolumn{3}{c|}{Grid size}\\ \cline{3-5}
 & & $32\times 32$ & $64\times 64$ & $128\times 128$\\
\hline
Square                             & $5\times 10^{-3}$ & 0.19s  & 1.41s & 15.45s \\
Ellipse                            & $3\times 10^{-3}$ & 0.14s  & 1.10s & 7.26s \\
Two balls ($u^{(0)}\equiv 0$)      & $3\times 10^{-3}$ & 0.10s  & 0.54s & 3.87s \\
Two balls ($u^{(0)}\equiv 1$)      & $3\times 10^{-3}$ & 0.66s  & 1.33s & 10.62s \\
\hline
\end{tabular}
\caption{Computation time for affine flows on various grids with $7\times7$ stencils.}
\label{tab:affine-flow-result-1}
\end{table}

\begin{figure}[!ht]
    \centering
    \begin{subfigure}[t]{0.45\textwidth}
        \centering
        \includegraphics[width=\textwidth]{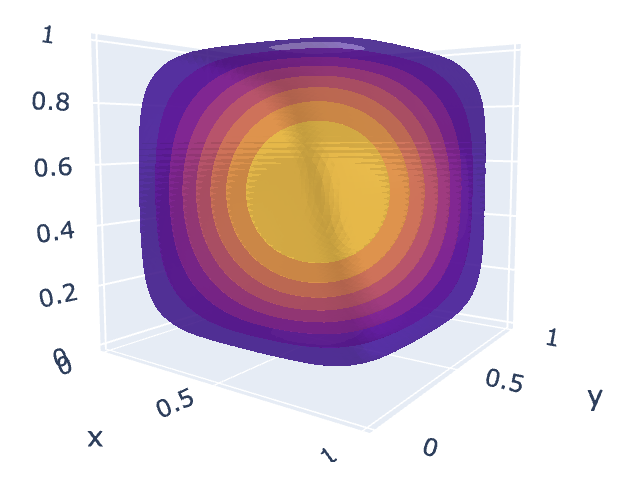}
        \caption{Cube}
    \end{subfigure}
    \begin{subfigure}[t]{0.45\textwidth}
        \centering
        \includegraphics[width=\textwidth]{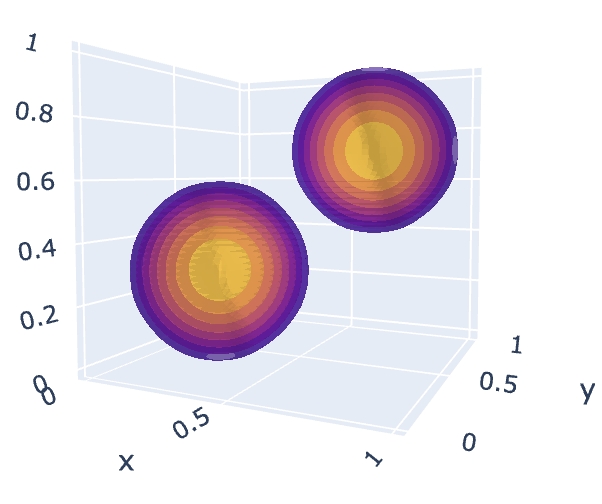}
        \caption{Two balls}
    \end{subfigure}
    \hfill
    \caption{Computed solutions of the mean curvature PDE given by equation~\eqref{eq:MC-exp} in a 3D domain $\Omega = [0,1]^3$. The left plot illustrates the numerical solution of the PDE for the case where $f=1$ everywhere in $\Omega$, while the right plot depicts the solution where $f=1$ in two separate spherical regions.}
    \label{fig:exp-curvature-pde-3d}
\end{figure}

\subsection{Tukey depth}\label{subsec:tukey-depth}

In this section, we use Algorithm~\ref{alg:main} to compute the viscosity solution of the Tukey depth eikonal equation~\eqref{eq:tukey_pde}
\[
    |\nabla u(x)| - \int_{(y-x)\cdot \nabla u(x) = 0} \rho(y)\, dS(y) = 0, \quad x \in \Omega.
\]
We present two experiments for computing Tukey depth measures. In the first experiment, we consider a Cartesian grid on a domain $\Omega=[0,1]^2$ and $\rho$ is a defined as
\[
    \rho(x) = \begin{cases}
         1 & \text{if } x \in E\\
         0 & \text{otherwise}.
    \end{cases}
\] 
We consider three different shapes for $E \subset \Omega$: a square, a circle, and a donut  (Figure~\ref{fig:tukey-4-dist}). 
\begin{figure}[ht!]
    \centering
    \begin{subfigure}[b]{0.25\textwidth}
        \centering
        \includegraphics[width=\textwidth]{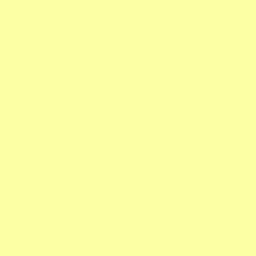}
        \caption{Square}
    \end{subfigure}
    \begin{subfigure}[b]{0.25\textwidth}
        \centering
        \includegraphics[width=\textwidth]{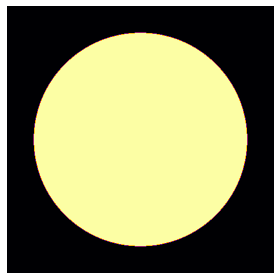}
        \caption{Circle}
    \end{subfigure}
    \begin{subfigure}[b]{0.25\textwidth}
        \centering
        \includegraphics[width=\textwidth]{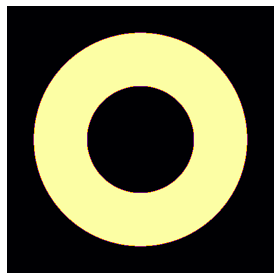}
        \caption{Donut}
    \end{subfigure}
    \caption{Three different shapes for $\rho$ considered in Section~\ref{subsec:tukey-depth}.}
    \label{fig:tukey-4-dist}
\end{figure}
In this experiment, instead of the usual wide stencil used in the preceding experiment, a different approach for the wide stencil scheme was implemented.

Given a point $x_0 \in \mathcal{X}_{n}$ and a displacement vector $p \in V_h(x_0)$, the nonlocal integral term from the PDE is approximated by
\[
    \int_{(y-x_0)\cdot p = 0} \rho(y)\, dS(y) = {|p|}\sum_{x_j \in I(x_0,p)} \rho(x_j) + O(h)
\]
where $I(x_0,p)$ contains points in $\mathcal{X}_{n}$ along the line with a slope of $p$ passing through $x_0$ (Figure~\ref{fig:illustration-indices-set}).
\begin{figure}[th!]
     \centering
\tikzset{every picture/.style={line width=0.5pt}} 

\begin{tikzpicture}[x=0.75pt,y=0.75pt,yscale=-0.8,xscale=0.8]

\draw  [draw opacity=0][dash pattern={on 0.84pt off 2.51pt}] (10,12) -- (350,12) -- (350,250) -- (10,250) -- cycle ; \draw  [dash pattern={on 0.84pt off 2.51pt}] (10,12) -- (10,250)(45,12) -- (45,250)(80,12) -- (80,250)(115,12) -- (115,250)(150,12) -- (150,250)(185,12) -- (185,250)(220,12) -- (220,250)(255,12) -- (255,250)(290,12) -- (290,250)(325,12) -- (325,250) ; \draw  [dash pattern={on 0.84pt off 2.51pt}] (10,12) -- (350,12)(10,47) -- (350,47)(10,82) -- (350,82)(10,117) -- (350,117)(10,152) -- (350,152)(10,187) -- (350,187)(10,222) -- (350,222) ; \draw  [dash pattern={on 0.84pt off 2.51pt}]  ;
\draw    (2,25) -- (360,205) ;
\draw  [fill={rgb, 255:red, 0; green, 0; blue, 0 }  ,fill opacity=1 ] (40,47) .. controls (40,44.24) and (42.24,42) .. (45,42) .. controls (47.76,42) and (50,44.24) .. (50,47) .. controls (50,49.76) and (47.76,52) .. (45,52) .. controls (42.24,52) and (40,49.76) .. (40,47) -- cycle ;
\draw  [fill={rgb, 255:red, 0; green, 0; blue, 0 }  ,fill opacity=1 ] (110,82) .. controls (110,79.24) and (112.24,77) .. (115,77) .. controls (117.76,77) and (120,79.24) .. (120,82) .. controls (120,84.76) and (117.76,87) .. (115,87) .. controls (112.24,87) and (110,84.76) .. (110,82) -- cycle ;
\draw  [fill={rgb, 255:red, 0; green, 0; blue, 0 }  ,fill opacity=1 ] (181,118) .. controls (181,115.79) and (182.79,114) .. (185,114) .. controls (187.21,114) and (189,115.79) .. (189,118) .. controls (189,120.21) and (187.21,122) .. (185,122) .. controls (182.79,122) and (181,120.21) .. (181,118) -- cycle ;
\draw  [fill={rgb, 255:red, 0; green, 0; blue, 0 }  ,fill opacity=1 ] (250,152) .. controls (250,149.24) and (252.24,147) .. (255,147) .. controls (257.76,147) and (260,149.24) .. (260,152) .. controls (260,154.76) and (257.76,157) .. (255,157) .. controls (252.24,157) and (250,154.76) .. (250,152) -- cycle ;
\draw  [fill={rgb, 255:red, 0; green, 0; blue, 0 }  ,fill opacity=1 ] (320,187) .. controls (320,184.24) and (322.24,182) .. (325,182) .. controls (327.76,182) and (330,184.24) .. (330,187) .. controls (330,189.76) and (327.76,192) .. (325,192) .. controls (322.24,192) and (320,189.76) .. (320,187) -- cycle ;
\draw [line width=1.5]    (185,117) -- (252.32,150.66) ;
\draw [shift={(255,152)}, rotate = 206.57] [color={rgb, 255:red, 0; green, 0; blue, 0 }  ][line width=1.5]    (14.21,-4.28) .. controls (9.04,-1.82) and (4.3,-0.39) .. (0,0) .. controls (4.3,0.39) and (9.04,1.82) .. (14.21,4.28)   ;

\draw (181,93.4) node [anchor=north west][inner sep=0.75pt]    {$x_{0}$};
\draw (222,110.4) node [anchor=north west][inner sep=0.75pt]    {$p \in V_h(x_0)$};
\end{tikzpicture}
     \caption{Illustration of the set $I(x_0,p) \subset \mathcal{X}_{n}$. Given a point $x_i \in \mathcal{X}_{n}$ and a displacement vector $p\in V_h(x_0)$ the set $I(x_0,p)$ includes all the points (represented as dots in the figure) that the line with a slope $p$ passes through including $x_0$. }
     \label{fig:illustration-indices-set}
 \end{figure}
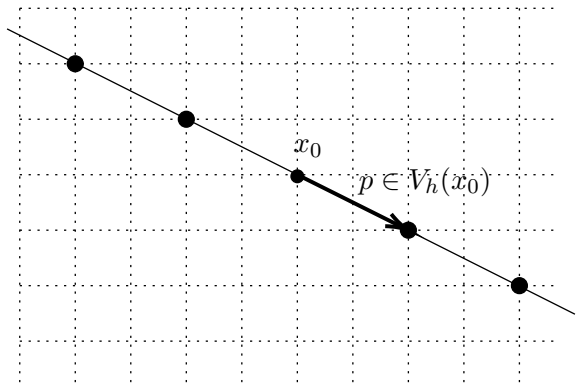 

 In this experiment, a different stencil scheme is utilized compared to the one described in Figure~\ref{fig:demo}. When considering a grid point $x_0$, instead of selecting points solely from the grid, points are chosen from the linear interpolation of the available points within a $3\times 3$ grid surrounding the center point $x_0$ (Figure~\ref{fig:tukey-square-stencils}). The advantage of adopting this stencil scheme lies in its capability to reduce the directional resolution $d\theta$, while preserving the length of the displacement vectors in $V_h(x_0)$. It's important to note that these interpolated points are approximated using a first-order approximation. Consequently, this stencil scheme is only applicable to first-order PDEs.

 \begin{figure}[ht!]
    \centering
    \begin{subfigure}[t]{0.32\textwidth}
        \centering
\tikzset{every picture/.style={line width=0.4pt}} 

\begin{tikzpicture}[x=0.75pt,y=0.75pt,yscale=-0.4,xscale=0.4]

\draw  [draw opacity=0][dash pattern={on 0.84pt off 2.51pt}] (20,30) -- (421.27,30) -- (421.27,431.27) -- (20,431.27) -- cycle ; \draw  [dash pattern={on 0.84pt off 2.51pt}] (220.64,30) -- (220.64,431.27) ; \draw  [dash pattern={on 0.84pt off 2.51pt}] (20,230.64) -- (421.27,230.64) ; \draw  [dash pattern={on 0.84pt off 2.51pt}] (20,30) -- (421.27,30) -- (421.27,431.27) -- (20,431.27) -- cycle ;
\draw    (220.64,230.64) -- (418.41,230.64) ;
\draw [shift={(418.41,230.64)}, rotate = 0] [color={rgb, 255:red, 0; green, 0; blue, 0 }  ][fill={rgb, 255:red, 0; green, 0; blue, 0 }  ][line width=0.75]      (0, 0) circle [x radius= 3.35, y radius= 3.35]   ;
\draw    (20,230.64) -- (220.64,230.64) ;
\draw [shift={(220.64,230.64)}, rotate = 0] [color={rgb, 255:red, 0; green, 0; blue, 0 }  ][fill={rgb, 255:red, 0; green, 0; blue, 0 }  ][line width=0.75]      (0, 0) circle [x radius= 3.35, y radius= 3.35]   ;
\draw [shift={(20,230.64)}, rotate = 0] [color={rgb, 255:red, 0; green, 0; blue, 0 }  ][fill={rgb, 255:red, 0; green, 0; blue, 0 }  ][line width=0.75]      (0, 0) circle [x radius= 3.35, y radius= 3.35]   ;
\draw    (220.64,230.64) -- (220.64,428.41) ;
\draw [shift={(220.64,428.41)}, rotate = 90] [color={rgb, 255:red, 0; green, 0; blue, 0 }  ][fill={rgb, 255:red, 0; green, 0; blue, 0 }  ][line width=0.75]      (0, 0) circle [x radius= 3.35, y radius= 3.35]   ;
\draw    (220.64,230.64) -- (220.64,30) ;
\draw [shift={(220.64,30)}, rotate = 270] [color={rgb, 255:red, 0; green, 0; blue, 0 }  ][fill={rgb, 255:red, 0; green, 0; blue, 0 }  ][line width=0.75]      (0, 0) circle [x radius= 3.35, y radius= 3.35]   ;
\draw    (220.64,230.64) -- (421.27,30) ;
\draw [shift={(421.27,30)}, rotate = 315] [color={rgb, 255:red, 0; green, 0; blue, 0 }  ][fill={rgb, 255:red, 0; green, 0; blue, 0 }  ][line width=0.75]      (0, 0) circle [x radius= 3.35, y radius= 3.35]   ;
\draw    (220.64,230.64) -- (421.27,431.27) ;
\draw [shift={(421.27,431.27)}, rotate = 45] [color={rgb, 255:red, 0; green, 0; blue, 0 }  ][fill={rgb, 255:red, 0; green, 0; blue, 0 }  ][line width=0.75]      (0, 0) circle [x radius= 3.35, y radius= 3.35]   ;
\draw    (220.64,230.64) -- (20,431.27) ;
\draw [shift={(20,431.27)}, rotate = 135] [color={rgb, 255:red, 0; green, 0; blue, 0 }  ][fill={rgb, 255:red, 0; green, 0; blue, 0 }  ][line width=0.75]      (0, 0) circle [x radius= 3.35, y radius= 3.35]   ;
\draw    (220.64,230.64) -- (20,30) ;
\draw [shift={(20,30)}, rotate = 225] [color={rgb, 255:red, 0; green, 0; blue, 0 }  ][fill={rgb, 255:red, 0; green, 0; blue, 0 }  ][line width=0.75]      (0, 0) circle [x radius= 3.35, y radius= 3.35]   ;
\draw [shift={(220.64,230.64)}, rotate = 225] [color={rgb, 255:red, 0; green, 0; blue, 0 }  ][fill={rgb, 255:red, 0; green, 0; blue, 0 }  ][line width=0.75]      (0, 0) circle [x radius= 3.35, y radius= 3.35]   ;
\draw    (220.64,230.64) -- (420,130) ;
\draw [shift={(420,130)}, rotate = 333.22] [color={rgb, 255:red, 0; green, 0; blue, 0 }  ][fill={rgb, 255:red, 0; green, 0; blue, 0 }  ][line width=0.75]      (0, 0) circle [x radius= 3.35, y radius= 3.35]   ;
\draw    (220.64,230.64) -- (320,30) ;
\draw [shift={(320,30)}, rotate = 296.35] [color={rgb, 255:red, 0; green, 0; blue, 0 }  ][fill={rgb, 255:red, 0; green, 0; blue, 0 }  ][line width=0.75]      (0, 0) circle [x radius= 3.35, y radius= 3.35]   ;
\draw    (220,230) -- (120,30) ;
\draw [shift={(120,30)}, rotate = 243.43] [color={rgb, 255:red, 0; green, 0; blue, 0 }  ][fill={rgb, 255:red, 0; green, 0; blue, 0 }  ][line width=0.75]      (0, 0) circle [x radius= 3.35, y radius= 3.35]   ;
\draw    (220.64,230.64) -- (20,130) ;
\draw [shift={(20,130)}, rotate = 206.64] [color={rgb, 255:red, 0; green, 0; blue, 0 }  ][fill={rgb, 255:red, 0; green, 0; blue, 0 }  ][line width=0.75]      (0, 0) circle [x radius= 3.35, y radius= 3.35]   ;
\draw    (220.64,230.64) -- (20,330) ;
\draw [shift={(20,330)}, rotate = 153.65] [color={rgb, 255:red, 0; green, 0; blue, 0 }  ][fill={rgb, 255:red, 0; green, 0; blue, 0 }  ][line width=0.75]      (0, 0) circle [x radius= 3.35, y radius= 3.35]   ;
\draw    (220.64,230.64) -- (120,430) ;
\draw [shift={(120,430)}, rotate = 116.78] [color={rgb, 255:red, 0; green, 0; blue, 0 }  ][fill={rgb, 255:red, 0; green, 0; blue, 0 }  ][line width=0.75]      (0, 0) circle [x radius= 3.35, y radius= 3.35]   ;
\draw    (220.64,230.64) -- (320,430) ;
\draw [shift={(320,430)}, rotate = 63.51] [color={rgb, 255:red, 0; green, 0; blue, 0 }  ][fill={rgb, 255:red, 0; green, 0; blue, 0 }  ][line width=0.75]      (0, 0) circle [x radius= 3.35, y radius= 3.35]   ;
\draw    (220.64,230.64) -- (420,330) ;
\draw [shift={(420,330)}, rotate = 26.49] [color={rgb, 255:red, 0; green, 0; blue, 0 }  ][fill={rgb, 255:red, 0; green, 0; blue, 0 }  ][line width=0.75]      (0, 0) circle [x radius= 3.35, y radius= 3.35]   ;

\end{tikzpicture}
        \caption{$k=16$}
    \end{subfigure}
    \hspace{1cm}
    \begin{subfigure}[t]{0.32\textwidth}
        \centering

\tikzset{every picture/.style={line width=0.4pt}} 

\begin{tikzpicture}[x=0.75pt,y=0.75pt,yscale=-0.4,xscale=0.4]

\draw  [draw opacity=0][dash pattern={on 0.84pt off 2.51pt}] (20,30) -- (421.27,30) -- (421.27,431.27) -- (20,431.27) -- cycle ; \draw  [dash pattern={on 0.84pt off 2.51pt}] (220.64,30) -- (220.64,431.27) ; \draw  [dash pattern={on 0.84pt off 2.51pt}] (20,230.64) -- (421.27,230.64) ; \draw  [dash pattern={on 0.84pt off 2.51pt}] (20,30) -- (421.27,30) -- (421.27,431.27) -- (20,431.27) -- cycle ;
\draw    (220.64,230.64) -- (421.27,230.64) ;
\draw [shift={(421.27,230.64)}, rotate = 0] [color={rgb, 255:red, 0; green, 0; blue, 0 }  ][fill={rgb, 255:red, 0; green, 0; blue, 0 }  ][line width=0.75]      (0, 0) circle [x radius= 3.35, y radius= 3.35]   ;
\draw    (20,230.64) -- (220.64,230.64) ;
\draw [shift={(220.64,230.64)}, rotate = 0] [color={rgb, 255:red, 0; green, 0; blue, 0 }  ][fill={rgb, 255:red, 0; green, 0; blue, 0 }  ][line width=0.75]      (0, 0) circle [x radius= 3.35, y radius= 3.35]   ;
\draw [shift={(20,230.64)}, rotate = 0] [color={rgb, 255:red, 0; green, 0; blue, 0 }  ][fill={rgb, 255:red, 0; green, 0; blue, 0 }  ][line width=0.75]      (0, 0) circle [x radius= 3.35, y radius= 3.35]   ;
\draw    (220.64,230.64) -- (220.64,428.41) ;
\draw [shift={(220.64,428.41)}, rotate = 90] [color={rgb, 255:red, 0; green, 0; blue, 0 }  ][fill={rgb, 255:red, 0; green, 0; blue, 0 }  ][line width=0.75]      (0, 0) circle [x radius= 3.35, y radius= 3.35]   ;
\draw    (220.64,230.64) -- (220.64,30) ;
\draw [shift={(220.64,30)}, rotate = 270] [color={rgb, 255:red, 0; green, 0; blue, 0 }  ][fill={rgb, 255:red, 0; green, 0; blue, 0 }  ][line width=0.75]      (0, 0) circle [x radius= 3.35, y radius= 3.35]   ;
\draw    (220.64,230.64) -- (421.27,30) ;
\draw [shift={(421.27,30)}, rotate = 315] [color={rgb, 255:red, 0; green, 0; blue, 0 }  ][fill={rgb, 255:red, 0; green, 0; blue, 0 }  ][line width=0.75]      (0, 0) circle [x radius= 3.35, y radius= 3.35]   ;
\draw    (220.64,230.64) -- (421.27,431.27) ;
\draw [shift={(421.27,431.27)}, rotate = 45] [color={rgb, 255:red, 0; green, 0; blue, 0 }  ][fill={rgb, 255:red, 0; green, 0; blue, 0 }  ][line width=0.75]      (0, 0) circle [x radius= 3.35, y radius= 3.35]   ;
\draw    (220.64,230.64) -- (20,431.27) ;
\draw [shift={(20,431.27)}, rotate = 135] [color={rgb, 255:red, 0; green, 0; blue, 0 }  ][fill={rgb, 255:red, 0; green, 0; blue, 0 }  ][line width=0.75]      (0, 0) circle [x radius= 3.35, y radius= 3.35]   ;
\draw    (220.64,230.64) -- (20,30) ;
\draw [shift={(20,30)}, rotate = 225] [color={rgb, 255:red, 0; green, 0; blue, 0 }  ][fill={rgb, 255:red, 0; green, 0; blue, 0 }  ][line width=0.75]      (0, 0) circle [x radius= 3.35, y radius= 3.35]   ;
\draw [shift={(220.64,230.64)}, rotate = 225] [color={rgb, 255:red, 0; green, 0; blue, 0 }  ][fill={rgb, 255:red, 0; green, 0; blue, 0 }  ][line width=0.75]      (0, 0) circle [x radius= 3.35, y radius= 3.35]   ;
\draw    (220.64,230.64) -- (420,130) ;
\draw [shift={(420,130)}, rotate = 333.22] [color={rgb, 255:red, 0; green, 0; blue, 0 }  ][fill={rgb, 255:red, 0; green, 0; blue, 0 }  ][line width=0.75]      (0, 0) circle [x radius= 3.35, y radius= 3.35]   ;
\draw    (220.64,230.64) -- (320,30) ;
\draw [shift={(320,30)}, rotate = 296.35] [color={rgb, 255:red, 0; green, 0; blue, 0 }  ][fill={rgb, 255:red, 0; green, 0; blue, 0 }  ][line width=0.75]      (0, 0) circle [x radius= 3.35, y radius= 3.35]   ;
\draw    (220,230) -- (120,30) ;
\draw [shift={(120,30)}, rotate = 243.43] [color={rgb, 255:red, 0; green, 0; blue, 0 }  ][fill={rgb, 255:red, 0; green, 0; blue, 0 }  ][line width=0.75]      (0, 0) circle [x radius= 3.35, y radius= 3.35]   ;
\draw    (220.64,230.64) -- (20,130) ;
\draw [shift={(20,130)}, rotate = 206.64] [color={rgb, 255:red, 0; green, 0; blue, 0 }  ][fill={rgb, 255:red, 0; green, 0; blue, 0 }  ][line width=0.75]      (0, 0) circle [x radius= 3.35, y radius= 3.35]   ;
\draw    (220.64,230.64) -- (20,330) ;
\draw [shift={(20,330)}, rotate = 153.65] [color={rgb, 255:red, 0; green, 0; blue, 0 }  ][fill={rgb, 255:red, 0; green, 0; blue, 0 }  ][line width=0.75]      (0, 0) circle [x radius= 3.35, y radius= 3.35]   ;
\draw    (220.64,230.64) -- (120,430) ;
\draw [shift={(120,430)}, rotate = 116.78] [color={rgb, 255:red, 0; green, 0; blue, 0 }  ][fill={rgb, 255:red, 0; green, 0; blue, 0 }  ][line width=0.75]      (0, 0) circle [x radius= 3.35, y radius= 3.35]   ;
\draw    (220.64,230.64) -- (320,430) ;
\draw [shift={(320,430)}, rotate = 63.51] [color={rgb, 255:red, 0; green, 0; blue, 0 }  ][fill={rgb, 255:red, 0; green, 0; blue, 0 }  ][line width=0.75]      (0, 0) circle [x radius= 3.35, y radius= 3.35]   ;
\draw    (220.64,230.64) -- (420,330) ;
\draw [shift={(420,330)}, rotate = 26.49] [color={rgb, 255:red, 0; green, 0; blue, 0 }  ][fill={rgb, 255:red, 0; green, 0; blue, 0 }  ][line width=0.75]      (0, 0) circle [x radius= 3.35, y radius= 3.35]   ;
\draw    (220.64,230.64) -- (420,180) ;
\draw [shift={(420,180)}, rotate = 345.75] [color={rgb, 255:red, 0; green, 0; blue, 0 }  ][fill={rgb, 255:red, 0; green, 0; blue, 0 }  ][line width=0.75]      (0, 0) circle [x radius= 3.35, y radius= 3.35]   ;
\draw    (220.64,230.64) -- (420,80) ;
\draw [shift={(420,80)}, rotate = 322.93] [color={rgb, 255:red, 0; green, 0; blue, 0 }  ][fill={rgb, 255:red, 0; green, 0; blue, 0 }  ][line width=0.75]      (0, 0) circle [x radius= 3.35, y radius= 3.35]   ;
\draw    (220.64,230.64) -- (370,30) ;
\draw [shift={(370,30)}, rotate = 306.67] [color={rgb, 255:red, 0; green, 0; blue, 0 }  ][fill={rgb, 255:red, 0; green, 0; blue, 0 }  ][line width=0.75]      (0, 0) circle [x radius= 3.35, y radius= 3.35]   ;
\draw    (220,230) -- (270,30) ;
\draw [shift={(270,30)}, rotate = 284.04] [color={rgb, 255:red, 0; green, 0; blue, 0 }  ][fill={rgb, 255:red, 0; green, 0; blue, 0 }  ][line width=0.75]      (0, 0) circle [x radius= 3.35, y radius= 3.35]   ;
\draw    (220.64,230.64) -- (170,30) ;
\draw [shift={(170,30)}, rotate = 255.84] [color={rgb, 255:red, 0; green, 0; blue, 0 }  ][fill={rgb, 255:red, 0; green, 0; blue, 0 }  ][line width=0.75]      (0, 0) circle [x radius= 3.35, y radius= 3.35]   ;
\draw    (220,230) -- (70,30) ;
\draw [shift={(70,30)}, rotate = 233.13] [color={rgb, 255:red, 0; green, 0; blue, 0 }  ][fill={rgb, 255:red, 0; green, 0; blue, 0 }  ][line width=0.75]      (0, 0) circle [x radius= 3.35, y radius= 3.35]   ;
\draw    (220.64,230.64) -- (20,80) ;
\draw [shift={(20,80)}, rotate = 216.9] [color={rgb, 255:red, 0; green, 0; blue, 0 }  ][fill={rgb, 255:red, 0; green, 0; blue, 0 }  ][line width=0.75]      (0, 0) circle [x radius= 3.35, y radius= 3.35]   ;
\draw    (220.64,230.64) -- (20,180) ;
\draw [shift={(20,180)}, rotate = 194.16] [color={rgb, 255:red, 0; green, 0; blue, 0 }  ][fill={rgb, 255:red, 0; green, 0; blue, 0 }  ][line width=0.75]      (0, 0) circle [x radius= 3.35, y radius= 3.35]   ;
\draw    (220.64,230.64) -- (20,280) ;
\draw [shift={(20,280)}, rotate = 166.18] [color={rgb, 255:red, 0; green, 0; blue, 0 }  ][fill={rgb, 255:red, 0; green, 0; blue, 0 }  ][line width=0.75]      (0, 0) circle [x radius= 3.35, y radius= 3.35]   ;
\draw    (220,230) -- (20,380) ;
\draw [shift={(20,380)}, rotate = 143.13] [color={rgb, 255:red, 0; green, 0; blue, 0 }  ][fill={rgb, 255:red, 0; green, 0; blue, 0 }  ][line width=0.75]      (0, 0) circle [x radius= 3.35, y radius= 3.35]   ;
\draw    (220.64,230.64) -- (70,430) ;
\draw [shift={(70,430)}, rotate = 127.07] [color={rgb, 255:red, 0; green, 0; blue, 0 }  ][fill={rgb, 255:red, 0; green, 0; blue, 0 }  ][line width=0.75]      (0, 0) circle [x radius= 3.35, y radius= 3.35]   ;
\draw    (220.64,230.64) -- (170,430) ;
\draw [shift={(170,430)}, rotate = 104.25] [color={rgb, 255:red, 0; green, 0; blue, 0 }  ][fill={rgb, 255:red, 0; green, 0; blue, 0 }  ][line width=0.75]      (0, 0) circle [x radius= 3.35, y radius= 3.35]   ;
\draw    (220.64,230.64) -- (270,430) ;
\draw [shift={(270,430)}, rotate = 76.09] [color={rgb, 255:red, 0; green, 0; blue, 0 }  ][fill={rgb, 255:red, 0; green, 0; blue, 0 }  ][line width=0.75]      (0, 0) circle [x radius= 3.35, y radius= 3.35]   ;
\draw    (220.64,230.64) -- (370,430) ;
\draw [shift={(370,430)}, rotate = 53.16] [color={rgb, 255:red, 0; green, 0; blue, 0 }  ][fill={rgb, 255:red, 0; green, 0; blue, 0 }  ][line width=0.75]      (0, 0) circle [x radius= 3.35, y radius= 3.35]   ;
\draw    (220.64,230.64) -- (420,380) ;
\draw [shift={(420,380)}, rotate = 36.84] [color={rgb, 255:red, 0; green, 0; blue, 0 }  ][fill={rgb, 255:red, 0; green, 0; blue, 0 }  ][line width=0.75]      (0, 0) circle [x radius= 3.35, y radius= 3.35]   ;
\draw    (220.64,230.64) -- (420,280) ;
\draw [shift={(420,280)}, rotate = 13.91] [color={rgb, 255:red, 0; green, 0; blue, 0 }  ][fill={rgb, 255:red, 0; green, 0; blue, 0 }  ][line width=0.75]      (0, 0) circle [x radius= 3.35, y radius= 3.35]   ;

\end{tikzpicture}
        \caption{$k=32$}
    \end{subfigure}
    \hfill
    \caption{The stencil scheme used for the first order Hamilton-Jacobi equations. Figures show the number of stencil points (a) $k=16$ and (b) $k=32$. }
    \label{fig:tukey-grids}
\end{figure}
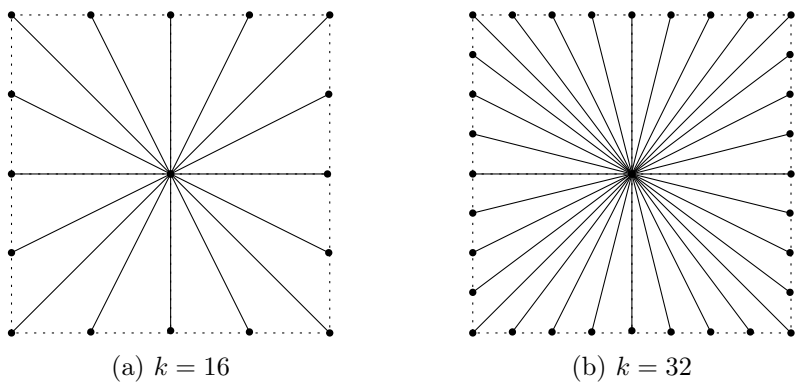

Using Algorithm~\ref{alg:main}, we computed the viscosity solutions of the PDE for each $\rho$, as well as for different pairs of grid mesh and the number of stencil points: ($32\times 32$, $k=16$), ($64\times 64$, $k=32$), and ($96 \times 96$, $k=64$). The analytical solutions for the Tukey depth measure were known, and we display the errors between the computed solutions and analytical solutions in Table~\ref{tab:tukey-result}, along with the elapsed time for computations. Furthermore, Figure~\ref{fig:tukey-128-results} displays the computed solutions on $128\times128$ grids. Note that the highest values of the solution indicate the medians of the density $\rho$. When $\rho$ is a donut, which is not quasiconcave, the computed viscosity solution is quasiconcave, as expected from the analytical solution of the Tukey depth measure.

\begin{table}[th!]
\centering
\begin{tabular}{|c|c|c|c|c|c|c|}
\hline
\multirow{2}{*}{$\rho$}  & \multicolumn{2}{c|}{$32\times 32$, $k = 16$} & \multicolumn{2}{c|}{$64\times 64$, $k=32$} & \multicolumn{2}{c|}{$96\times 96$, $k=48$}\\ \cline{2-3} \cline{4-5} \cline{6-7} 
  & Time & Error & Time & Error & Time & Error\\
\hline
Square                      & 0.26s & $7.66\times 10^{-3}$ & 4.72s & $6.94 \times 10^{-3}$ & 34.98s & $6.29 \times 10^{-3}$ \\
Circle                      & 0.26s & $6.45\times 10^{-2}$ & 4.51s & $2.13 \times 10^{-3}$ & 33.88s & $1.48\times 10^{-3}$\\
Donut                       & 0.33s & $5.15\times 10^{-3}$ & 4.93s & $1.36 \times 10^{-3}$ & 37.69s & $7.84\times 10^{-4}$ \\
\hline
\end{tabular}
\caption{Computation time and errors for Tukey depth eikonal equation on various sizes grids and stencils.}
\label{tab:tukey-result}
\end{table}

\begin{figure}[ht!]
    \centering
    \begin{subfigure}[t]{0.32\textwidth}
        \centering
        \includegraphics[width=\textwidth]{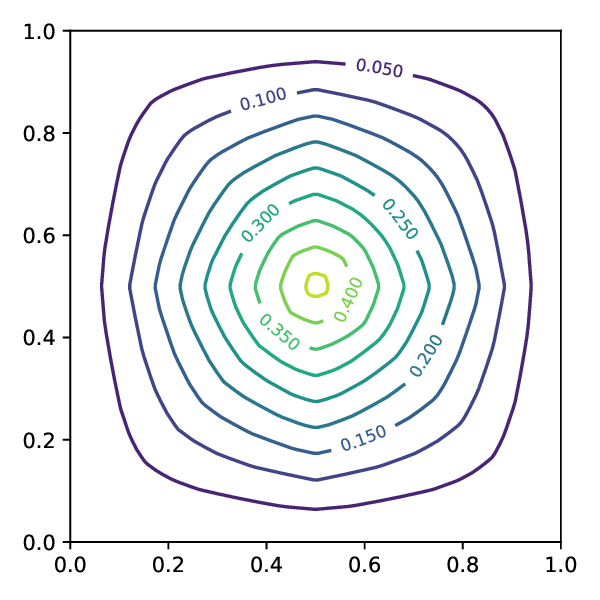}
        \caption{\footnotesize Computed solution on a square}
    \end{subfigure}
    \hfill
    \begin{subfigure}[t]{0.32\textwidth}
        \centering
        \includegraphics[width=\textwidth]{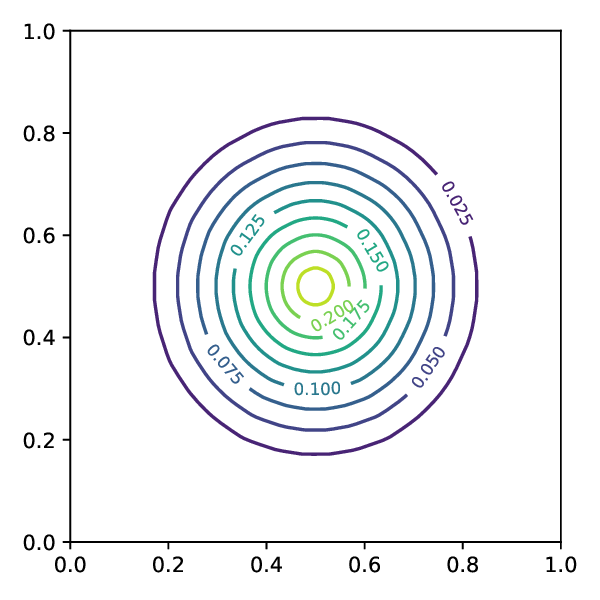}
        \caption{\footnotesize Computed solution on a circle}
    \end{subfigure}
    \hfill
    \begin{subfigure}[t]{0.32\textwidth}
        \centering
        \includegraphics[width=\textwidth]{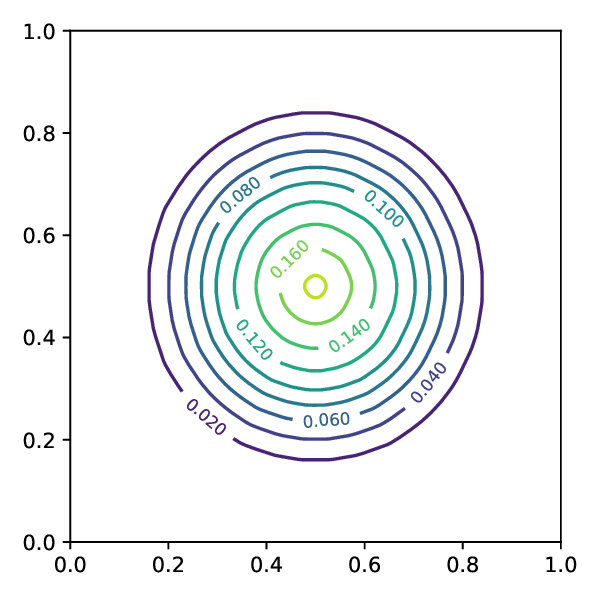}
        \caption{\footnotesize Computed solution on a donut}
    \end{subfigure}
    \hfill
    \begin{subfigure}[t]{0.32\textwidth}
        \centering
        \includegraphics[width=\textwidth]{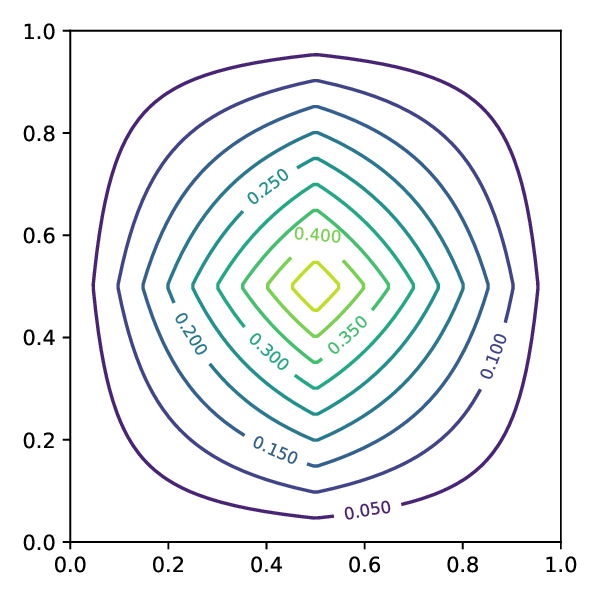}
        \caption{\footnotesize Analytical solution on a square}
    \end{subfigure}
    \hfill
    \begin{subfigure}[t]{0.32\textwidth}
        \centering
        \includegraphics[width=\textwidth]{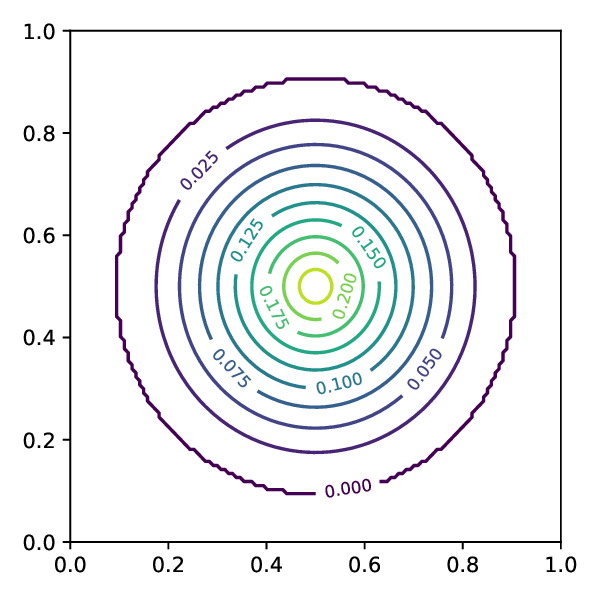}
        \caption{\footnotesize Analytical solution on a circle}
    \end{subfigure}
    \hfill
    \begin{subfigure}[t]{0.32\textwidth}
        \centering
        \includegraphics[width=\textwidth]{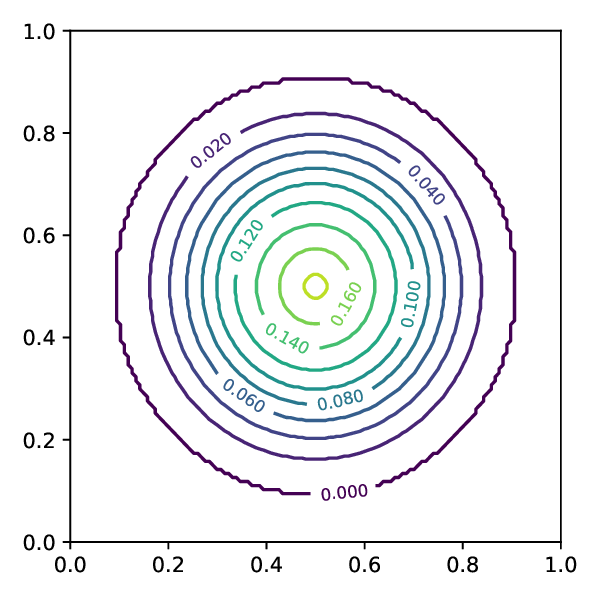}
        \caption{\footnotesize Analytical solution on a donut}
    \end{subfigure}
    \hfill
    \caption{Computed results and analytical solutions of the Tukey depth eikonal equation.}
    \label{fig:tukey-128-results}
\end{figure}

Note that in Figure~\ref{fig:tukey-128-results}, the computed solution for the square density differs noticeably from the analytical solution. The level sets of the analytical solution are squares near the center, while the level sets of the computed solution resemble octagons. This computation can be improved by increasing the number of stencils, or in other words, by reducing the value of $d\theta$. Figure~\ref{fig:tukey-square-stencils} illustrates the computed solutions on a $512\times 512$ domain using the number of stencil points $k=8, 48, 240$. As evident, the level sets of the computed solutions tend to become more square-like as the number of stencils increases.

\begin{figure}[ht!]
    \centering
    \begin{subfigure}[t]{0.24\textwidth}
        \centering
        \includegraphics[width=\textwidth]{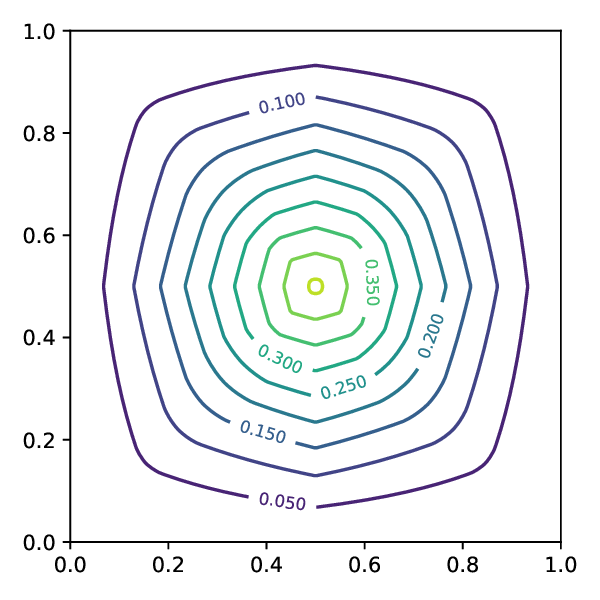}
        \caption{\footnotesize $16$ stencil points}
    \end{subfigure}
    \hfill
    \begin{subfigure}[t]{0.24\textwidth}
        \centering
        \includegraphics[width=\textwidth]{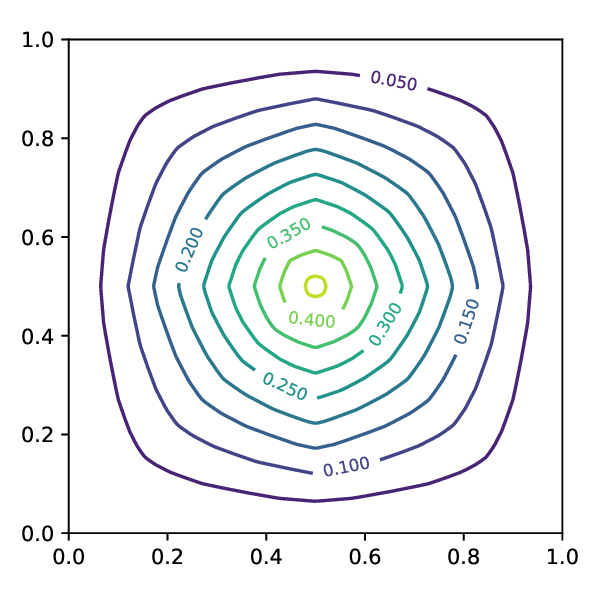}
        \caption{\footnotesize $48$ stencil points}
    \end{subfigure}
    \hfill
    \begin{subfigure}[t]{0.24\textwidth}
        \centering
        \includegraphics[width=\textwidth]{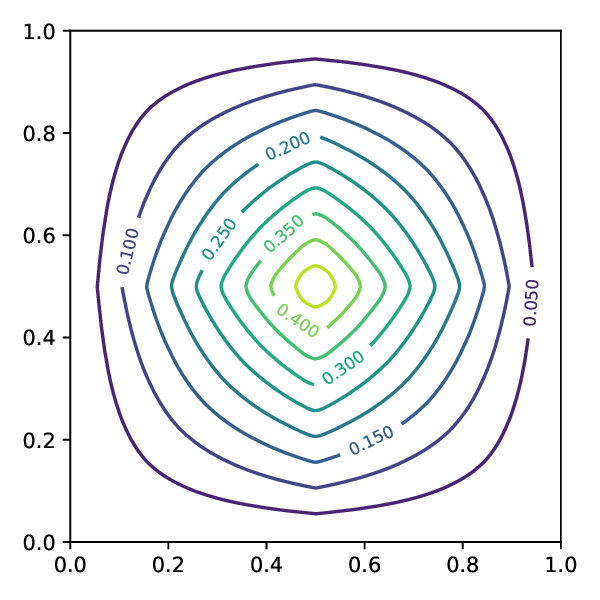}
        \caption{\footnotesize $240$ stencil points}
    \end{subfigure}
    \hfill
    \begin{subfigure}[t]{0.24\textwidth}
        \centering
        \includegraphics[width=\textwidth]{images/tukey-square-sol.eps}
        \caption{\footnotesize Analytical solution}
    \end{subfigure}
    \hfill
    \caption{Computed results on $512\times 512$ grids and analytical solutions of the Tukey depth eikonal equation where $\rho$ represents a uniform distribution on $[0,1]^2$. The computed solutions become closer to the analytical solution as the number of stencil points increases, i.e., as $d\theta$ approaches 0.}
    \label{fig:tukey-square-stencils}
\end{figure}

To demonstrate the robustness of the statistical depth provided by Tukey depth, we compare the solutions of the eikonal equation:
\begin{equation*}
\left\{
    \begin{aligned}
        |\nabla u(x)| &= \rho(x), && x \in \Omega\\
        u(x) &= 0, && x \in \partial \Omega
    \end{aligned}
\right.
\end{equation*}
 and of the Tukey depth eikonal equation:
 \begin{equation*}
\left\{
    \begin{aligned}
        |\nabla u(x)| = \int_{(y-x) \cdot \nabla u(x) = 0} \rho(y) \, dS(y), \quad x \in \Omega.
    \end{aligned}
\right.
\end{equation*}
Here, $\rho=1$ on some subset $E \subset \Omega$, and $\rho=0$ otherwise. The shape of $E$ is visually represented in Figure~\ref{fig:eikonal-comparison-a}, where it can be observed that $E$ assumes the form of a circle with a minor perturbation within its interior, i.e., $\rho=0$ on a small area in the interior. Figure~\ref{fig:eikonal-comparison} illustrates the computed solutions of these two equations on $512\times 512$ grids. It is important to note that because $\rho$ is not strictly positive, the solution to the eikonal equation is not unique, and the computed solution may depend on the chosen initialization of $u^{0}$. Figure~\ref{fig:eikonal-comparison} presents the computed solution with the initialization $u^0\equiv 0$. It is evident from the figures that the solution to the eikonal equation is significantly influenced by the small perturbation. On the other hand, the solution of the Tukey depth eikonal equation is unique, even when $\rho$ vanishes (see \cite{molina2022tukey}), and hence the solution remains relatively unperturbed by it.  Consequently, the results affirm the robustness of the Tukey depth eikonal equation in the presence of perturbations. 

\begin{figure}[ht!]
    \centering
    \begin{subfigure}[t]{0.31\textwidth}
        \centering
        \includegraphics[width=\textwidth]{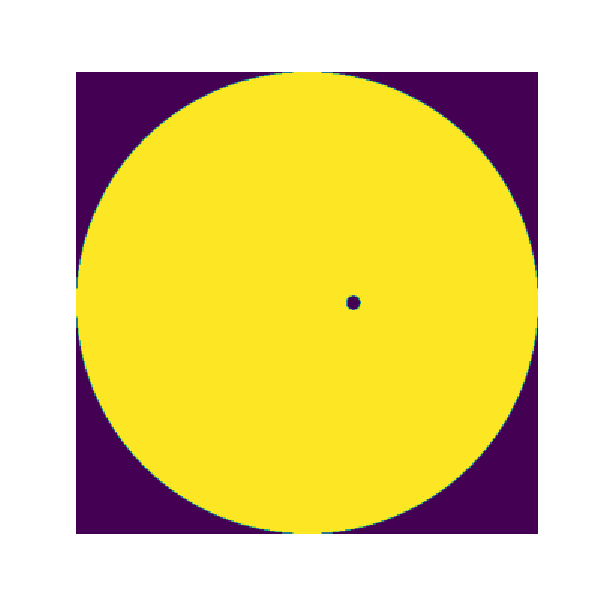}
        \caption{ $\rho$}
        \label{fig:eikonal-comparison-a}
    \end{subfigure}
    \hfill
    \begin{subfigure}[t]{0.31\textwidth}
        \centering
        \includegraphics[width=\textwidth]{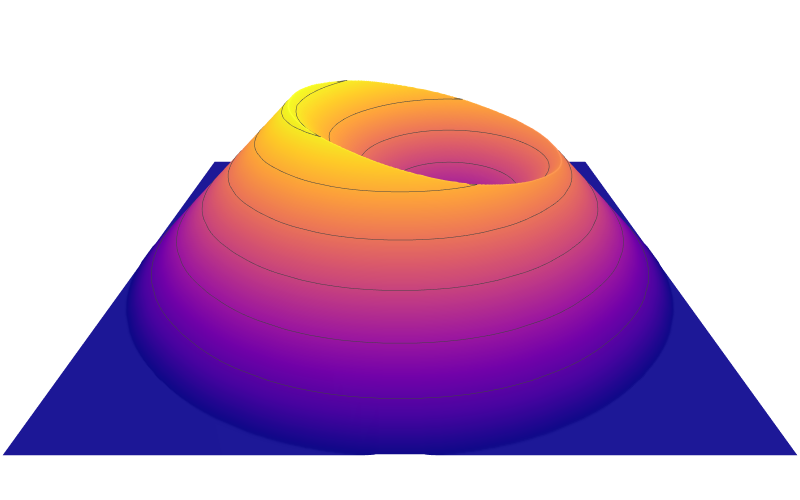}
        \caption{ Eikonal equation}
    \end{subfigure}
    \hfill
    \begin{subfigure}[t]{0.31\textwidth}
        \centering
        \includegraphics[width=\textwidth]{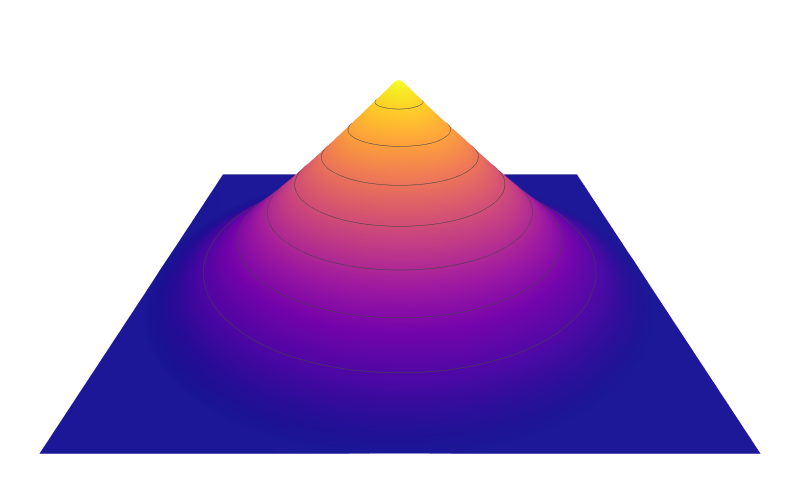}
        \caption{ Tukey depth}
    \end{subfigure}
    \hfill
    \caption{(a) illustrates the density $\rho$ with a small perturbation in the interior, where bright pixels represent a value of $1$ and dark pixels represent a value of $0$.
    (b) and (c) depict the computed solutions of the eikonal equation and the Tukey depth eikonal equation, respectively, using the density $\rho$ on $512\times 512$ grids.}
    \label{fig:eikonal-comparison}
\end{figure}



Next, we solve the Tukey depth eikonal equation on unstructured point clouds $\mathcal{X}_{n}$ that are independent and identically distributed sampled from the uniform distribution $\rho \in \mathcal{P}(\Omega)$ on a square in $\mathbb{R}^2$ and on a ball in $\mathbb{R}^2$ and $\mathbb{R}^3$. We construct a $k=30$ Euclidean distance nearest neighbors graph from $\mathcal{X}_{n}$ to define $N_h(x)$ for each $x\in\mathcal{X}_{n}$. The imposed boundary condition is a Dirichlet boundary condition such that
\[
    u(x) = 0, \quad x \in \partial_\epsilon \mathcal{X}_{n}
\]
where $\partial_\epsilon \mathcal{X}_{n} := \{ x \in \mathcal{X}_{n}: d(x, \partial \Omega) < \epsilon \}$ and $d(x,y)=|x-y|$.

Note that there are various density estimation techniques that can be used to approximate the nonlocal integral function $(x, p) \mapsto \int_{(y - x) \cdot p = 0} \rho(y) , dS(y)$. However, in this experiment, we analytically compute the function for demonstration purposes. The quantitative results, showing the error between computed solutions and analytical solutions in $\mathbb{R}^2$ and $\mathbb{R}^3$, are displayed in Table~\ref{tab:tukey-2-result} and visualized in Figure~\ref{fig:tukey-2-result} and Figure~\ref{fig:tukey-3d-result}. The error is computed through the $L^1$ norm between the computed solutions $u_{c}:\mathcal{X}_n\rightarrow \mathbb{R}$ and analytical solutions~$u_{a}:\mathcal{X}_n\rightarrow \mathbb{R}$:
\begin{align*}
\text{Error} = \|u_{c} - u_{a}\|_{L^1(\mathcal{X}_n)} = \frac{1}{n} \sum_{x \in \mathcal{X}_n} |u_{c}(x) - u_{a}(x)|.
\end{align*}

\begin{table}[th!]
\centering
\begin{tabular}{|c|c|c|c|c|c|c|}
\hline
\multirow{2}{*}{$\rho$}  & \multicolumn{2}{c|}{$n=1000$} & \multicolumn{2}{c|}{$n=3000$} & \multicolumn{2}{c|}{$n=10000$}\\ \cline{2-3} \cline{4-5} \cline{6-7} 
  & Time & Error & Time & Error & Time & Error\\
\hline
Square                     & 0.28s & $5.74\times 10^{-4}$ & 1.15s & $2.24 \times 10^{-4}$ & 5.44s & $1.59 \times 10^{-4}$ \\
Circle (2D)                 & 0.39s & $2.24\times 10^{-3}$ & 1.20s & $9.15 \times 10^{-4}$ & 5.10s & $8.38\times 10^{-4}$\\
Circle (3D)                 & 0.41s & $5.76\times 10^{-4}$ & 1.07s & $3.31 \times 10^{-4}$ & 4.60s & $2.76\times 10^{-4}$\\
\hline
\end{tabular}
\caption{Computation time and errors for Tukey depth eikonal equation on 2D point clouds.}
\label{tab:tukey-2-result}
\end{table}

\begin{figure}[ht!]
    \centering
    \begin{subfigure}[b]{0.48\textwidth}
        \centering
        \includegraphics[width=\textwidth]{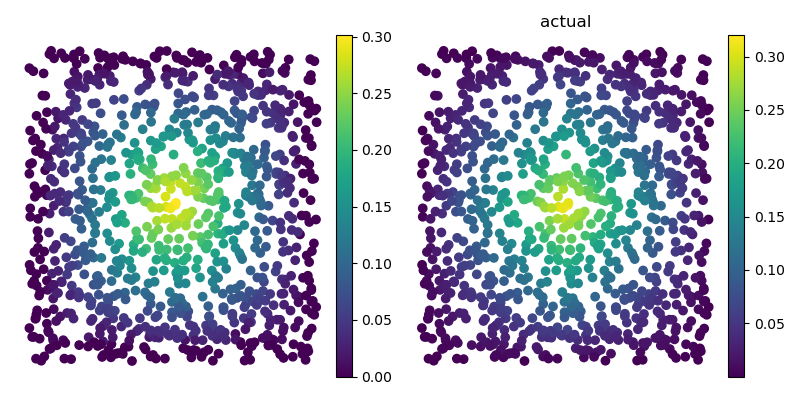}
        \caption{$n = 1000$ on a square density}
    \end{subfigure}
    \hfill
    \begin{subfigure}[b]{0.48\textwidth}
        \centering
        \includegraphics[width=\textwidth]{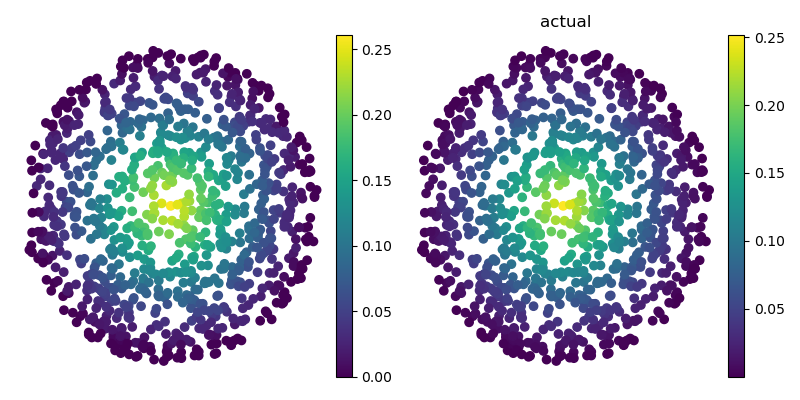}
        \caption{$n = 1000$ on a circle density}
    \end{subfigure}
    \begin{subfigure}[b]{0.48\textwidth}
        \centering
        \includegraphics[width=\textwidth]{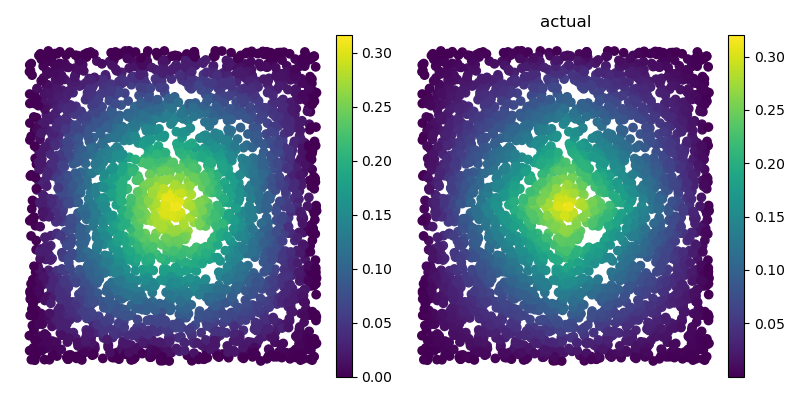}
        \caption{$n = 3000$ on a square density}
    \end{subfigure}
    \hfill
    \begin{subfigure}[b]{0.48\textwidth}
        \centering
        \includegraphics[width=\textwidth]{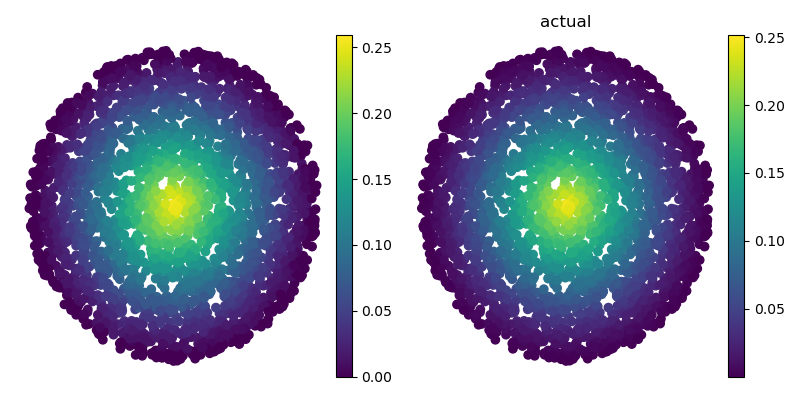}
        \caption{$n = 3000$ on a circle density}
    \end{subfigure}
    \begin{subfigure}[b]{0.48\textwidth}
        \centering
        \includegraphics[width=\textwidth]{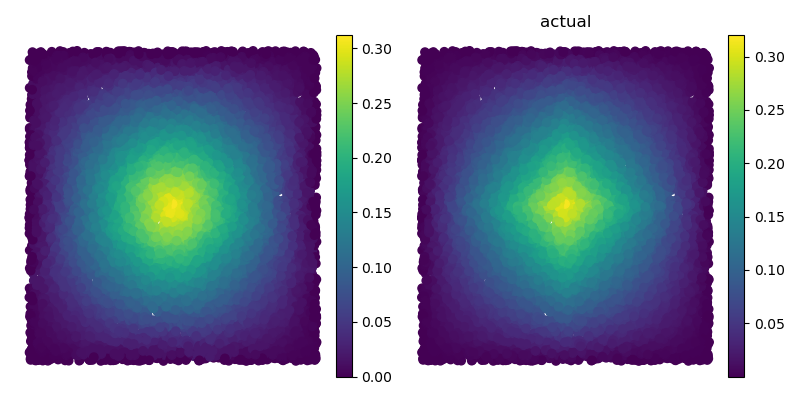}
        \caption{$n = 10000$ on a square density}
    \end{subfigure}
    \hfill
    \begin{subfigure}[b]{0.48\textwidth}
        \centering
        \includegraphics[width=\textwidth]{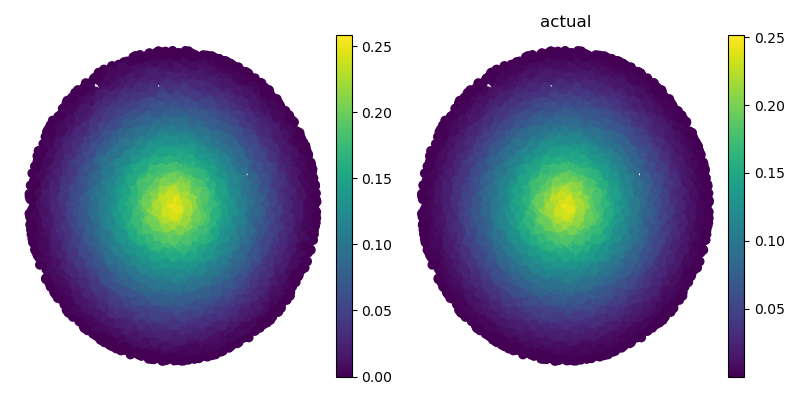}
        \caption{$n = 10000$ on a circle density}
    \end{subfigure}
    \hfill
    \caption{Computed solutions and analytical solutions of Tukey depth eikonal equation on point clouds in $\mathbb{R}^2$. Each subplot (a)-(f) displays the computed solution on the left and the analytical solution on the right, for varying numbers of points and densities.}
    \label{fig:tukey-2-result}
\end{figure}

\begin{figure}[ht!]
    \centering
    \begin{subfigure}[b]{0.32\textwidth}
        \centering
        \includegraphics[width=\textwidth]{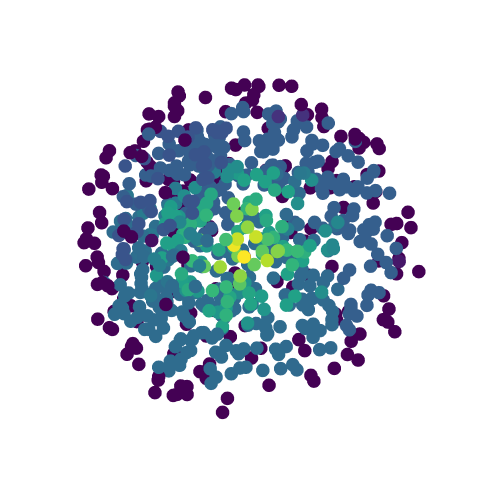}
        \caption{$n = 1000$ on a sphere}
    \end{subfigure}
    \hfill
    \begin{subfigure}[b]{0.32\textwidth}
        \centering
        \includegraphics[width=\textwidth]{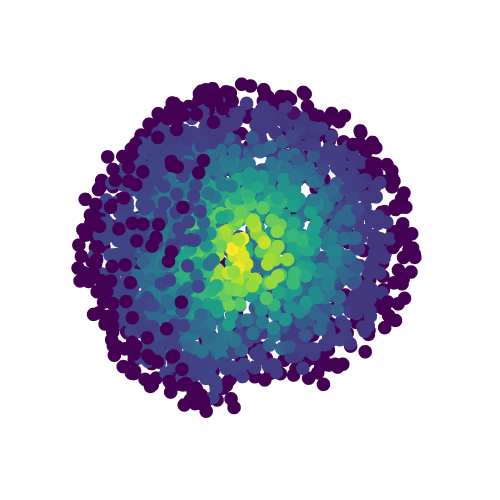}
        \caption{$n = 3000$ on a sphere}
    \end{subfigure}
    \begin{subfigure}[b]{0.32\textwidth}
        \centering
        \includegraphics[width=\textwidth]{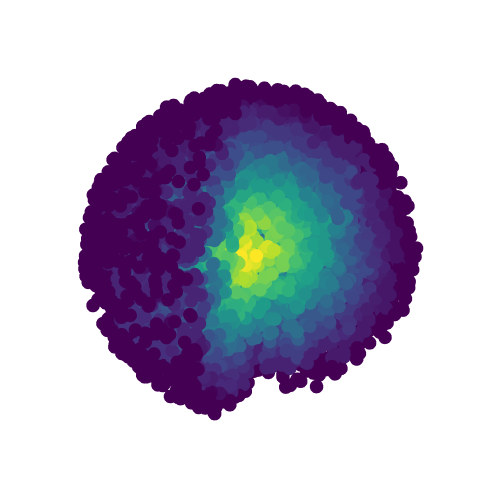}
        \caption{$n = 10000$ on a sphere}
    \end{subfigure}
    \hfill
    \caption{Computed solutions of Tukey depth eikonal equation on point clouds sampled from the uniform distribution on a sphere in $\mathbb{R}^3$. The figures display the segmented 3D sphere for clear visualization.}
    \label{fig:tukey-3d-result}
\end{figure}


\subsection{Applications to high-dimensional datasets}\label{subsec:mnist}

In this set of experiments, we solve the Tukey depth eikonal equation on high-dimensional datasets. We consider the MNIST~\cite{lecun1998gradient} and Fashion-MNIST~\cite{xiao2017fashion} datasets. The MNIST dataset consists of $28\times 28$ grayscale images of handwritten digits from $0$ to $9$, while Fashion-MNIST consists of $28\times 28$ grayscale images of ten classes of clothing such that shoes, t-shirts, and so on. 

Let $\mathcal{X}_{n} \subset \mathbb{R}^{784}$ be a point cloud containing $4000$ images of a single digit ($0,\cdots,9$) from MNIST dataset. Thus, $\mathcal{X}_{n}$ is an empirical distribution of a data density of a given digit from the MNIST dataset. We then construct $k=30$ Euclidean distance nearest neighbors graph from $\mathcal{X}_{n}$, which defines the set of neighbors $N_h(x)$ for each $x\in\mathcal{X}_{n}$. Since this is a high-dimensional problem, computing an integral on the hyperplane of $\mathbb{R}^{784}$ is a challenging task.  In this experiment, we approximate the nonlocal integral term by
\[
    \int_{(y-x)\cdot p = 0} \rho(y) \, dS(y) \approx \int_{(y-x) \cdot p = 0} \rho(y) \mathcal{N}_\sigma(|x-y|)\, dS(y)
\]
where $\mathcal{N}_\sigma$ is a normal distribution with a variance $\sigma$ and a mean $0$. We compute this integral term using Monte-Carlo simulation 
\[
    \int_{(y-x) \cdot p = 0} \rho(y) \mathcal{N}_\sigma(|x-y|)\, dS(y) \approx \frac{1}{N} \sum^N_{i=1} \rho(y_i)
\]
where $y_i$ are samplings from a normal distribution on a hyperplane $\{y: (y-x) \cdot p=0\}$. In the expression, $\rho(y_i)$ is computed by a kernel density estimation such that
\[
    \rho(y_i) \approx \frac{1}{M} \sum^M_{j=1} \mathcal{N}_{r}(x_j - y_i).
\]
The same Dirichlet boundary condition of a point cloud is used as in the preceding experiment. 

In the high-dimensional setting, the spatial resolution $h$ is very large, since the distance between points grows exponentially with dimension (put another way, to keep $h$ fixed as $d\to \infty$ would require an exponentially growing number of points, as we encounter the \emph{curse of dimensionality}). Thus, we do not expect to obtain a highly accurate approximation of the true solution. Furthermore, we do not have access to the exact solution anyway, so we cannot check the accuracy. Instead, in the present experiments we visualize the images with the highest and lowest computed depth values (i.e., the deepest and shallowest points) in order to demonstrate the algorithm's ability to approximate a reasonable notion of data depth. 

We repeat the experiment for each digit from $0$ to $9$ in MNIST dataset and for each class of clothings in Fashion-MNIST dataset. The results are displayed in Figure~\ref{fig:mnist-result} (MNIST) and Figure~\ref{fig:fmnist-result} (Fashion-MNIST). Each figure in Figure~\ref{fig:mnist-result} shows $16$ highest points from (a) the computed solutions of Tukey depth eikonal equation, (b) the distance function $\dist(x, \partial \Omega)$ from the eikonal equation,
(c) $16$ boundary points in $\partial_\epsilon \mathcal{X}_{n}$, and (d) $16$ random points from $\mathcal{X}_{n}$. The boundary points were computed using the method in \cite{calder2022boundary}. The highest points from the computed solutions of Tukey depth eikonal equation correspond to median points of the datasets. When comparing (a) with other results, we can see the median points from Tukey depth show the most consistent shapes of the digits. Similarly, Figure~\ref{fig:fmnist-result} (a) shows the most consistent results of all. Thus, even though our numerical method may not accuractely approximate the true solution of the Tukey depth PDE in a high dimenional setting, the method is computationally efficient and produces reasonable results for data depth.

%

\begin{figure}[ht!]
    \centering
    \begin{subfigure}[b]{0.49\textwidth}
        \centering
        \includegraphics[width=\textwidth]{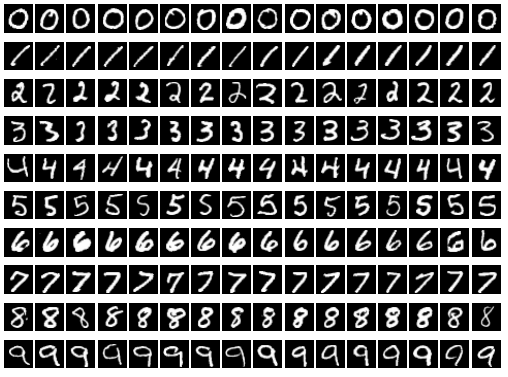}
        \caption{Tukey depth median}
        \label{fig:mnist-1}
    \end{subfigure}
    \hfill
    \begin{subfigure}[b]{0.49\textwidth}
        \centering
        \includegraphics[width=\textwidth]{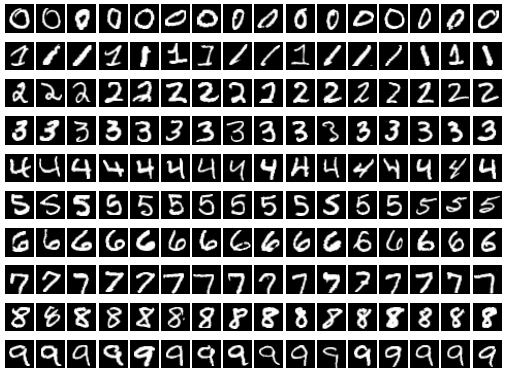}
        \caption{Eikonal median}
        \label{fig:mnist-4}
    \end{subfigure}
    \hfill

    \medskip

    \begin{subfigure}[b]{0.49\textwidth}
        \centering
        \includegraphics[width=\textwidth]{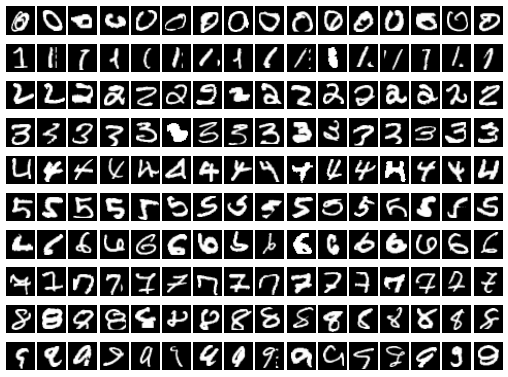}
        \caption{Boundary images}
        \label{fig:mnist-3}
    \end{subfigure}
    \hfill
    \begin{subfigure}[b]{0.49\textwidth}
        \centering
        \includegraphics[width=\textwidth]{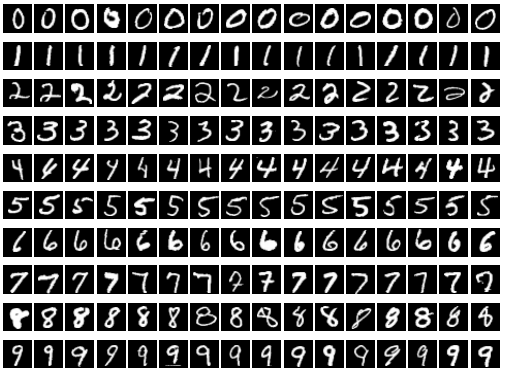}
        \caption{Random images}
        \label{fig:mnist-2}
    \end{subfigure}
    \hfill
    \caption{Median images from MNIST dataset.}
    \label{fig:mnist-result}
\end{figure}

\begin{figure}[ht!]
    \centering
    \begin{subfigure}[b]{0.49\textwidth}
        \centering
        \includegraphics[width=\textwidth]{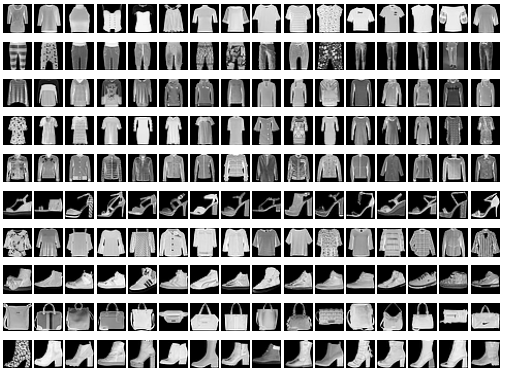}
        \caption{Tukey depth median}
        \label{fig:fmnist-1}
    \end{subfigure}
    \hfill
    \begin{subfigure}[b]{0.49\textwidth}
        \centering
        \includegraphics[width=\textwidth]{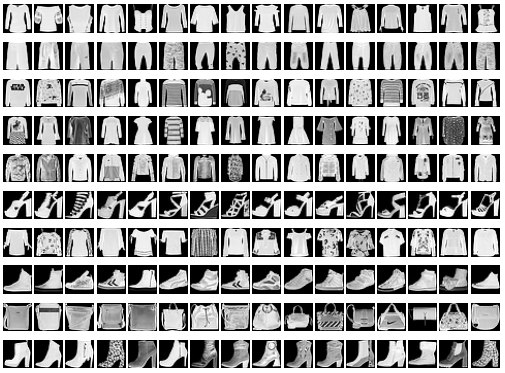}
        \caption{Eikonal median}
        \label{fig:fmnist-4}
    \end{subfigure}
    \hfill

    \medskip

    \begin{subfigure}[b]{0.49\textwidth}
        \centering
        \includegraphics[width=\textwidth]{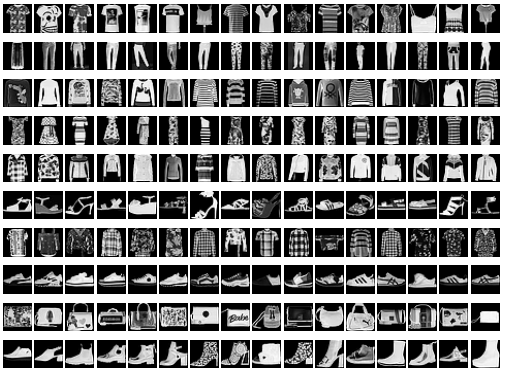}
        \caption{Boundary images}
        \label{fig:fmnist-3}
    \end{subfigure}
    \hfill
    \begin{subfigure}[b]{0.49\textwidth}
        \centering
        \includegraphics[width=\textwidth]{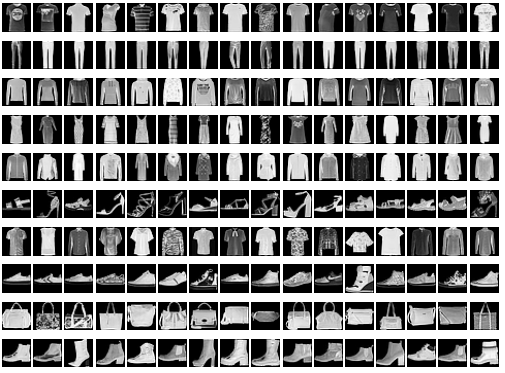}
        \caption{Random images}
        \label{fig:fmnist-2}
    \end{subfigure}
    \hfill
    \caption{Median images from MNIST dataset.}
    \label{fig:fmnist-result}
\end{figure}

\section{Conclusion}

In this paper, we developed a new monotone finite difference scheme for solving Hamilton-Jacobi equations with quasiconcave solutions. The method is based on a novel set-valued monotone discretization for the direction of the gradient. We proved that the method converges to the viscosity solution of the underlying Hamilton-Jacobi equation, and presented a series of numerical results on various types of curvature motion in $d=2$ and $d=3$ dimensions, as well as applications to computing the Tukey depth on high dimensional image datasets including MNIST and FashionMNIST. Future work will focus on expanding the methods to equations that do not enjoy the quasiconcavity property.

\section*{Acknowledgments}
The authors thank the Institute for Mathematics and its Applications (IMA). JC acknowledges funding from NSF grant DMS:1944925, the Alfred P.~Sloan foundation, a McKnight Presidential Fellowship, and the Albert and Dorothy Marden Professorship. WL acknowledges funding from the National Institute of Standards and Technology (NIST) under award number 70NANB22H021.

\bibliography{ref}
\bibliographystyle{abbrv}
\end{document}